\def\SL@eqntext#1{\rlap{\quad\SL@margintext{#1}}}
\newcommand{\mb}[1]{{\boldsymbol #1}}
\newcommand{\R}{\mathbb R}
\newtheorem{theorem}{Theorem}
\newtheorem{corollary}{Corollary}
\newtheorem{lemma}{Lemma}
\renewcommand{\@endtheorem}{\endtrivlist\@endpefalse} 
\newcommand{\blot}[1]{}
\title{Travelling waves in a dispersion--saturating diffusion
  equation}
\author{Gnord Maypaokha, Nabil Bedjaoui, Joaquim M. C. Correia, Michael Grinfeld}
\begin{document}

\maketitle


\section{Introduction}

In her fundamental 1982 paper \cite{Schonbek1982} M. E. Schonbek
considered the KdV-Burgers equation, which after changes of dependent
and independent variables can be written in the form 
\begin{equation}\label{BKdV}
  u_t + f(u)_x = \epsilon u_{xx} + \delta u_{xxx}, \; t \geq 0,  \; \;
  x \in \R,
\end{equation}
with $\epsilon, \delta \geq 0$ and $f(u)=u^2/2$. She formulated
conditions on $\delta$ under which the solutions of the Cauchy problem
for (\ref{BKdV}) with appropriately chosen initial data converge in a
suitable $L^p$ space to a solution of the Cauchy problem for the
inviscid Burgers equation
\begin{equation}\label{iB}
  u_t + f(u)_x =0
\end{equation}
as $\epsilon \rightarrow 0_+$. See Theorem 4.1 of
Ref.\cite{Schonbek1982} for details. Schonbek's
paper has given rise to a considerable body of work
(see the more than 250 papers citing Ref.\cite{Schonbek1982}), in which
equations which like KdV-Burgers incorporate both diffusive and
dispersive effects have been considered with the aim of establishing
the relationship between dispersion and diffusion as the diffusion
parameter goes to zero that guarantees convergence to a solution, and
sometimes specifically to the unique entropy solution, of the Cauchy
problem for the corresponding hyperbolic conservation law.

In the analysis of convergence of solutions of the regularised
conservation laws to those of the un-regularised one, Schonbek
emphasises the importance of the existence of monotone travelling
waves; see the detailed discussion on pp. 962 and 982 of
Ref.\cite{Schonbek1982}. However, the exact connection is not clear: for
example, in Ref.\cite{PerthameRyzhik2007} the authors suggest that the
monotonicity of travelling waves is not a criterion for convergence of
all solutions of (\ref{BKdV}) to the entropy solution of
(\ref{iB}); they prove the convergence of travelling waves to shock
waves in a regime that only admits non-monotone travelling waves.
Perhaps the most that can be said at this stage is that often the fact
that dispersion and diffusion coefficients satisfy a relationship that
permits the existence of monotone travelling waves, is a sufficient
condition for proving convergence of solutions of the regularised
equations to those of the conservation law as parameters go to
zero. This is what Schonbek proved in Ref.\cite{Schonbek1982} for the
KdV-Burgers equation; in the case of a general convex flux the
situation is even less clear.

In the present paper, we consider a variation on (\ref{BKdV}) with a
generalised Rosenau-type diffusion \cite{Rosenau1990,KurganovLevyRosenau1998}: 
 \begin{equation}\label{Ros1}
  u_t+f(u)_x= \epsilon \left(\frac{u_x}{(1+u_x^2)^\alpha}\right)_x +
  \delta u_{xxx},
\end{equation}
where $f$ is a $C^2$ strictly convex function such that $f''(u)>0$ on
$\R$, and $\alpha\geq 0$. Rosenau and coauthors concentrated on the
case $\alpha=1/2$ and $\alpha=1$ with $\delta =0$. In the spirit of
Ref.\cite{Schonbek1982}, we discuss the conditions for the existence of
monotone travelling waves for (\ref{Ros1}). We call (\ref{Ros1}) the
(generalised) Rosenau-KdV equation.

We expect different behaviour for the regimes $\alpha<1/2$,
$\alpha=1/2$, and $\alpha>1/2$. The function $F$ defined by 
  \[
    F(s)= \frac{s}{(1+s^2)^\alpha},
  \]
  increases monotonically with $s$ without bound if $\alpha<1/2$ and
  is bounded and non-monotone with $\lim_{s\rightarrow \infty} F(s)=0$
  if $\alpha>1/2$; in the critical case $\alpha=1/2$, $F$ increases
  mo\-no\-to\-ni\-cal\-ly and is bounded. Hence we expect the results
  for the case $\alpha<1/2$ to be in some sense similar to the
  KdV-Burgers case, while in the case $\alpha>1/2$ the generalised
  Rosenau diffusion operator shares many properties with bounded
  operators.


The structure of the present paper is as follows: in
Section~\ref{prelim} we derive the equation for the travelling waves,
investigate its basic properties and collect the tools, mainly from
Ref.\cite{Hadeler2017}, needed for our analysis. In Section~\ref{gen} we
prove results that are true for all values of $\alpha \geq 0$; in
Section~\ref{less} we consider the parabolic-like case $\alpha<1/2$ and
in Section~\ref{more} we consider the case of $\alpha>1/2$, always
making sure to indicate what is true for the threshold classical case
$\alpha=1/2$. We conclude with remarks, suggestions for further work
and conjectures.

\section{Preliminaries} \label{prelim}

Introducing the travelling wave frame $\zeta=x-\lambda t$ and setting 
$v(\zeta)=u(x,t)$, we obtain
\begin{equation}\label{tw1}
  -\lambda v' + f(v)' = \epsilon
  \left[ \frac{v'}{\left(1+(v')^2\right)^\alpha}\right]'
  + \delta v''',
\end{equation}
where the primes denote derivatives with respect to $\zeta$.

We are looking for fronts of (\ref{Ros1}) connecting some value $u_-$
at $\zeta= -\infty$ to another value $u_+$ at $\zeta=+\infty$. As we
are interested in the creation of shocks as
$\epsilon, \delta \rightarrow 0$, we take $u_-> u_+$.

Integrating (\ref{tw1}) in $\zeta$ from $-\infty$ to $+\infty$
and assuming that all derivatives vanish as
$\zeta \rightarrow \pm \infty$, we have the Rankine-Hugoniot
condition, 
\begin{equation}\label{lambda}
  \lambda= \frac{f(u_+) -f(u_-)}{u_+ -u_-}.
\end{equation}  

Integrating now from $-\infty$ to $\zeta$, we obtain the mathematical
object we are interested in,
\begin{equation}\label{tw2}
  -\epsilon \frac{v'}{\left(1+(v')^2\right)^\alpha} = \delta v''-g(v),
\end{equation}  
where
\begin{equation}\label{gv}
  g(v) =  f(v)-f(u_-)-\lambda (v-u_-) = f(v)-f(u_+)-\lambda (v-u_+).
\end{equation}
It is easy to check that $-g(v)$ given by (\ref{gv}) is monostable and
concave and its zeros are $u_\pm$. 

We write (\ref{tw2}) as a system,
  \begin{equation}\label{twsys}
    \begin{cases}
      v'  = w, & \\
      w' = \frac{1}{\delta} \left[- \frac{\epsilon
          w}{(1+w^2)^\alpha}+g(v)
      \right]. &\\
    \end{cases}
   \end{equation} 
   Linear analysis shows that in (\ref{twsys}), $(u_-,0)$ is always a
   saddle, while $(u_+,0)$ changes from a stable spiral to a stable
   node when $\epsilon = \epsilon_0(\delta)$, where
\begin{equation}\label{e0}
  \epsilon_0 (\delta) =2\sqrt{\delta}\sqrt{-g'(u_+)}
  =2\sqrt{\delta}\sqrt{\lambda -f'(u_+)}. 
\end{equation}
Note that $\epsilon_0 (\delta)$ is independent of $\alpha$.

In the sequel we will need to understand better the flow in a
neighbourhood of the rest point $(u_+,0)$ when it is a stable node.  

Denote the eigenvalues of the system (\ref{twsys}) at the
equilibrium $(u_+,0)$ by $\chi_\pm$,
\begin{equation} \label{eigs} 
     \chi_\pm = \frac{1}{2}\left(
      -\frac{\epsilon}{\delta} \pm \sqrt{\frac{\epsilon^2}{\delta^2} +
        \frac{4 g'(u_+)}{\delta}}
    \right).
  \end{equation}
  
  We call the direction of an eigenvector corresponding to $\chi_+$
  the {\bf main direction} and the direction of an eigenvector
  corresponding to $\chi_-$ the {\bf side direction}. Note that
  Hadeler and Rothe \cite{HadelerRothe75} use the terminology {\em
    main manifold} and {\em side manifold}. It is clear that indeed
  there is an invariant 1-manifold tangent to the side direction at
  $(u_+, 0)$; in the fourth quadrant of the $(v, w)$ plane it is a
  single orbit. Proving that there is similar construct tangent to the
  main direction is more involved; see, for example Ref.\cite{KS}.

  By computing partial derivatives of $\chi_\pm$ with respect to
  $\epsilon$ and $\delta$, we have

  \begin{lemma} \label{rot}
    If $\epsilon > \epsilon_0(\delta)$, $\delta>0$, 
    \begin{equation}\label{pd}
      \frac{\partial \chi_+}{\partial \delta} < 0, \; \; \frac{\partial
        \chi_+}{\partial \epsilon} > 0,
    \end{equation}
    and 
    \begin{equation}\label{pe}
      \frac{\partial \chi_-}{\partial \delta} > 0, \; \; \frac{\partial
        \chi_-}{\partial \epsilon} < 0.
    \end{equation}
\end{lemma}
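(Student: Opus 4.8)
The plan is to compute the four partial derivatives directly from the closed form (\ref{eigs}) and to reduce each sign question to an elementary inequality. First I would record the structural facts that make the calculation go through. Because $(u_+,0)$ is a stable node exactly when $\epsilon>\epsilon_0(\delta)$, and because $g'(u_+)=f'(u_+)-\lambda<0$, it is convenient to write $a=-g'(u_+)>0$. By (\ref{e0}) the hypothesis $\epsilon>\epsilon_0(\delta)=2\sqrt{\delta a}$ is equivalent to $\epsilon^2>4a\delta$, which guarantees that the radicand
\[
  D=\frac{\epsilon^2}{\delta^2}-\frac{4a}{\delta}
\]
is strictly positive, so $\sqrt{D}>0$ and the two eigenvalues $\chi_\pm=\tfrac12\bigl(-\epsilon/\delta\pm\sqrt{D}\,\bigr)$ are real and distinct. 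The strict inequality matters: it keeps $\sqrt{D}$ away from zero, so the derivative expressions below are well defined.

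For the derivatives in $\epsilon$, differentiating gives
\[
  \frac{\partial\chi_\pm}{\partial\epsilon}
  =\frac{1}{2}\left(-\frac{1}{\delta}\pm\frac{\epsilon}{\delta^2\sqrt{D}}\right).
\]
The sign of $\partial\chi_+/\partial\epsilon$ is thus governed by whether $\epsilon/(\delta\sqrt{D})$ exceeds $1$, i.e. by comparing $\epsilon$ with $\delta\sqrt{D}=\sqrt{\epsilon^2-4a\delta}$. Since $4a\delta>0$ we have $\epsilon>\sqrt{\epsilon^2-4a\delta}$, so $\partial\chi_+/\partial\epsilon>0$; for $\chi_-$ both bracketed terms are negative, giving $\partial\chi_-/\partial\epsilon<0$ immediately. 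These two cases are routine.

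The derivatives in $\delta$ are where the real work lies. Differentiating yields
\[
  \frac{\partial\chi_\pm}{\partial\delta}
  =\frac{1}{2\delta^2}\left(\epsilon\pm\frac{2a-\epsilon^2/\delta}{\sqrt{D}}\right).
\]
For $\chi_-$ the $\pm$ term equals $+(\epsilon^2/\delta-2a)/\sqrt{D}$, and since $\epsilon^2>4a\delta$ forces $\epsilon^2/\delta>4a>2a$, this is a sum of two positive quantities, so $\partial\chi_-/\partial\delta>0$ at once. For $\chi_+$ I must instead show $\epsilon+(2a-\epsilon^2/\delta)/\sqrt{D}<0$, equivalently $\epsilon^2/\delta-2a>\epsilon\sqrt{D}$. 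Both sides are positive (the left by the same bound $\epsilon^2/\delta>4a$), so the inequality may be checked by squaring, and the decisive point is the exact cancellation
\[
  \left(\frac{\epsilon^2}{\delta}-2a\right)^{2}-\epsilon^2 D=4a^2>0 .
\]
This clean identity is the heart of the argument and the one step I expect to need care to present: one must confirm that both sides are positive before squaring, which is precisely where the node condition $\epsilon^2>4a\delta$ is used. With the four sign computations in hand, the estimates (\ref{pd}) and (\ref{pe}) follow.
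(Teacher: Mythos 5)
Your computation is correct and follows exactly the route the paper intends: the paper states the lemma as a direct computation of the partial derivatives of $\chi_\pm$ and omits the details, and your write-up supplies precisely those details, including the one non-obvious step (squaring $\epsilon^2/\delta - 2a > \epsilon\sqrt{D}$ after checking both sides are positive, with the cancellation leaving $4a^2$). Nothing further is needed.
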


The results of Lemma~\ref{rot} mean, for example, that as we
increase $\epsilon$, the main manifold rotates anticlockwise in the
4th quadrant of the $(v, w)$ plane and becomes flatter, while the side
manifold rotates clockwise and becomes steeper, and vice versa if we
increase $\delta$. We will see the consequences of these facts below.

Similarly, for the positive eigenvalue $\theta_+$ of the linearisation
at $(u_-,0)$, by direct computation we have
\begin{lemma}\label{rot1}
  For all positive $\epsilon$ and $\delta$,
\begin{equation}\label{thp}
      \frac{\partial \theta_+}{\partial \epsilon} < 0, \; \; \frac{\partial
        \theta_+}{\partial \delta} < 0.
\end{equation}    
\end{lemma}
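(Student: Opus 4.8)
The plan is to proceed by direct computation, as the sentence preceding the lemma indicates, but to organise the algebra so that each sign claim collapses to an elementary inequality. First I would write down the Jacobian of \eqref{twsys} at $(u_-,0)$. Since $\frac{\partial}{\partial w}\left[\frac{\epsilon w}{(1+w^2)^\alpha}\right]\big|_{w=0}=\epsilon$ independently of $\alpha$, the linearisation is
\[
J=\begin{pmatrix} 0 & 1 \\ g'(u_-)/\delta & -\epsilon/\delta \end{pmatrix},
\]
which already explains why the conclusion is $\alpha$-independent. Its characteristic polynomial is $\mu^2+(\epsilon/\delta)\mu-g'(u_-)/\delta=0$, so the positive root is
\[
\theta_+=\frac{1}{2}\left(-\frac{\epsilon}{\delta}+\sqrt{\frac{\epsilon^2}{\delta^2}+\frac{4g'(u_-)}{\delta}}\right),
\]
the analogue at the left state of the formula \eqref{eigs}.

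The key structural input is the sign of $g'(u_-)$. Because $g(v)=f(v)-f(u_-)-\lambda(v-u_-)$ with $f$ strictly convex, and $\lambda=f'(\xi)$ for some $\xi\in(u_+,u_-)$ by the mean value theorem applied to \eqref{lambda}, we have $g'(u_-)=f'(u_-)-\lambda>0$; equivalently this is just the already-recorded fact that $(u_-,0)$ is a saddle. In particular the discriminant $\epsilon^2/\delta^2+4g'(u_-)/\delta$ is strictly positive for \emph{all} $\epsilon,\delta>0$, with no threshold analogous to $\epsilon_0(\delta)$, which is exactly why the lemma holds on the whole positive quadrant rather than only for $\epsilon>\epsilon_0(\delta)$ as in Lemma~\ref{rot}.

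Next I would differentiate $\theta_+$, simplifying throughout with the identity $\delta\sqrt{\epsilon^2/\delta^2+4g'(u_-)/\delta}=\sqrt{\epsilon^2+4g'(u_-)\delta}$. This yields
\[
\frac{\partial\theta_+}{\partial\epsilon}=\frac{1}{2\delta}\left(\frac{\epsilon}{\sqrt{\epsilon^2+4g'(u_-)\delta}}-1\right),\qquad
\frac{\partial\theta_+}{\partial\delta}=\frac{1}{2\delta^2}\left(\epsilon-\frac{\epsilon^2+2g'(u_-)\delta}{\sqrt{\epsilon^2+4g'(u_-)\delta}}\right).
\]
The first is negative because $g'(u_-)>0$ forces $\sqrt{\epsilon^2+4g'(u_-)\delta}>\epsilon$. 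For the second it suffices to show $\epsilon\sqrt{\epsilon^2+4g'(u_-)\delta}<\epsilon^2+2g'(u_-)\delta$; both sides are positive, and squaring reduces this to $0<4g'(u_-)^2\delta^2$, which holds. Both partial derivatives are therefore strictly negative, establishing \eqref{thp}.

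I do not anticipate a genuine obstacle: the entire content is the sign bookkeeping in the two derivatives together with the reduction to the trivial inequality $0<4g'(u_-)^2\delta^2$. The only point requiring care is to resist keeping the unsimplified radical $\sqrt{\epsilon^2/\delta^2+\cdots}$, which obscures the cancellations; clearing the factor of $\delta$ into the root first is what makes both signs transparent.
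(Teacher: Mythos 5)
Your computation is correct and is precisely the ``direct computation'' the paper invokes without writing out: the same eigenvalue formula, the sign $g'(u_-)>0$ from convexity, and elementary estimates on the two partial derivatives. (As a minor remark, the rationalised form $\theta_+=2g'(u_-)/\bigl(\epsilon+\sqrt{\epsilon^2+4\delta g'(u_-)}\bigr)$, which the paper records later, makes both monotonicities immediate, but your route is equally valid.)
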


A very useful construction, which is extensively used in
Refs.\cite{GildingKersner2004,Hadeler2017}, is as follows: If there exists
a monotone travelling wave solution of (\ref{tw2}), we can define
$F(v)=-v'$ for $v \in [u_+,u_-]$. Then $F(u_+)=F(u_-)=0$ and
$F'(u_+) > 0$. Using the chain rule, (\ref{tw2}) transforms into
  \begin{equation}\label{feq}
      \frac{\epsilon F(v)}{\left( 1+ (F(v))^2
        \right)^\alpha} = \delta F(v) F'(v)-g(v).
   \end{equation}   
  
   To summarise, the research question addressed in this paper is then
   as follows.  Consider (\ref{tw2}). Given $\alpha$, $\delta>0$, we
   are asking what is the minimal $\epsilon$,
   $\epsilon_{min}(\alpha, \delta)$ for which this equation has {\bf
     monotone} travelling waves, i.e. monotone decreasing heteroclinic
   orbits connecting $(u_-,0)$ to $(u_+,0)$.

   We remind the reader that (\ref{tw2}) is said to be {\bf linearly
     determined} if $\epsilon_{min}(\alpha, \delta)= \epsilon_0(\delta)$
   and {\bf nonlinearly determined} if
   $\epsilon_{min}(\alpha, \delta) > \epsilon_0(\delta)$.

    The problem of existence of monotone travelling waves in the
    KdV-Burgers equation reduces to the analysis of the well-known
    (linearly determined) KPP equation; this is not the case for our
    equation (\ref{Ros1}); in our case the question of linear or
    nonlinear determinacy is far from trivial. In general, while much
    machinery invented for scalar reaction-diffusion equations and
    admirably summarised in Chapter 8 of the book of
    Hadeler, \cite{Hadeler2017} is useful for our analysis, many
    statements require additional work; in particular, Fenichel theory
    of slow-fast systems turns out to be of importance as well; good
    references for that are the papers by Hek and by Jones \cite{Hek,Jones}.

\section{Analysis for all $\alpha \geq 0$} \label{gen}

First of all, we have

\begin{theorem}\label{thex}
  For fixed $\delta$ and $\alpha$, if $\epsilon$ is large enough,
  (\ref{tw2}) has monotone travelling waves.
\end{theorem}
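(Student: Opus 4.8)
The plan is to study the phase portrait of the planar system (\ref{twsys}) directly and to realise the desired front as the relevant branch of the unstable manifold of the saddle $(u_-,0)$. Since $-g$ is concave with zeros $u_\pm$ and $u_+<u_-$, we have $g<0$ on the open interval $(u_+,u_-)$, and this sign is what drives everything. First I would single out the branch of the unstable manifold of $(u_-,0)$ that leaves along the eigenvector $(1,\theta_+)$ into the half-plane $w<0$: near $(u_-,0)$ it satisfies $w\approx\theta_+(v-u_-)<0$, so it enters the strip $S=\{\,u_+<v<u_-,\ w<0\,\}$. The goal is to show that for $\epsilon$ large this orbit is confined to a region pinched at $(u_+,0)$ and hence converges to that rest point with $v$ strictly decreasing, which is exactly a monotone decreasing front.

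Next I would build a trapping lens bounded above by the segment $w=0$, $u_+\le v\le u_-$, and below by a linear fence $w=\Phi(v):=-c(v-u_+)$ with a fixed $c>0$. On the upper edge the vector field satisfies $v'=0$ and $w'=g(v)/\delta<0$, so the flow points strictly downward and no trajectory in $S$ can escape upward across $w=0$. For the lower edge, writing $w$ as a function of the (decreasing) variable $v$ along an orbit, $\Phi$ is a lower fence provided
\[
  -\frac{\epsilon\Phi}{(1+\Phi^2)^\alpha}+g(v)>\delta\,\Phi\,\Phi'(v),
\]
which for $\Phi=-c(v-u_+)$ becomes, after dividing by $v-u_+>0$,
\[
  \frac{\epsilon c}{\bigl(1+c^2(v-u_+)^2\bigr)^\alpha}+\frac{g(v)}{v-u_+}>\delta c^2 .
\]
Here $g(v)/(v-u_+)$ extends continuously to $u_+$ with value $g'(u_+)$ and is therefore bounded below on $(u_+,u_-]$, while $|\Phi|\le c(u_--u_+)$ keeps the denominator bounded above for \emph{every} $\alpha\ge0$; hence the first term is bounded below by a positive multiple of $\epsilon$. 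Thus the inequality holds uniformly in $v$ once $\epsilon$ is large, independently of $\alpha$. In the language of (\ref{feq}) this fence is just the linear comparison function $F(v)=c(v-u_+)$.

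Because the unstable-manifold branch starts above the fence (near $u_-$ its height tends to $0^-$ while the fence sits at $-c(u_--u_+)$) and below $w=0$, these two barriers confine it to the lens, which is pinched to the single point $(u_+,0)$. In the lens $w<0$, so $v(\zeta)$ decreases strictly to a limit $v_\infty\ge u_+$; invariance of the $\omega$-limit set forces $w=0$ and $g(v_\infty)=0$ there, whence $v_\infty=u_+$ and the orbit converges to $(u_+,0)$. Since large $\epsilon$ gives $\epsilon>\epsilon_0(\delta)$ in (\ref{e0}), $(u_+,0)$ is a node, so the approach is non-spiralling and the front is genuinely monotone. I expect the main obstacle to be the verification of the fence inequality near the degenerate pinch $v=u_+$, where $\Phi$, $g$ and the right-hand side all vanish to first order: the argument works only because the saturating term, evaluated on the \emph{bounded} fence, contributes a coefficient of order $\epsilon$ that dominates both $g'(u_+)$ and $\delta c^2$. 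Keeping $|w|$ bounded along the fence is precisely what makes the estimate uniform across all $\alpha\ge0$, so that no separate treatment of the regimes $\alpha\lessgtr 1/2$ is needed for this existence result.
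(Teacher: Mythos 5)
Your argument is correct and is essentially the paper's own proof: you construct the same positively invariant region bounded by the segment $[u_+,u_-]$ of the $v$-axis and the linear barrier $w=-c(v-u_+)$ (the paper's $h(v)=B(v-u_+)$), arrive at exactly the invariance inequality (\ref{ineq}), and close the estimate by the same two bounds (the saturating term stays bounded below because $|w|$ is bounded on the fence, and $g(v)/(v-u_+)$ is bounded below by convexity of $f$). The only difference is cosmetic: you spell out the $\omega$-limit-set step that the paper leaves implicit when it asserts that positive invariance of $R$ yields the monotone heteroclinic orbit.
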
 

\begin{proof} 
The proof follows the logic of Ref.\cite{Hadeler2017}. For
  (\ref{twsys}), consider the region $R$ in the $(v,w)$ phase plane as
  in Figure~\ref{fig1}.


  \begin{figure}[H]
    	\centerline{\includegraphics[width=0.6\textwidth]{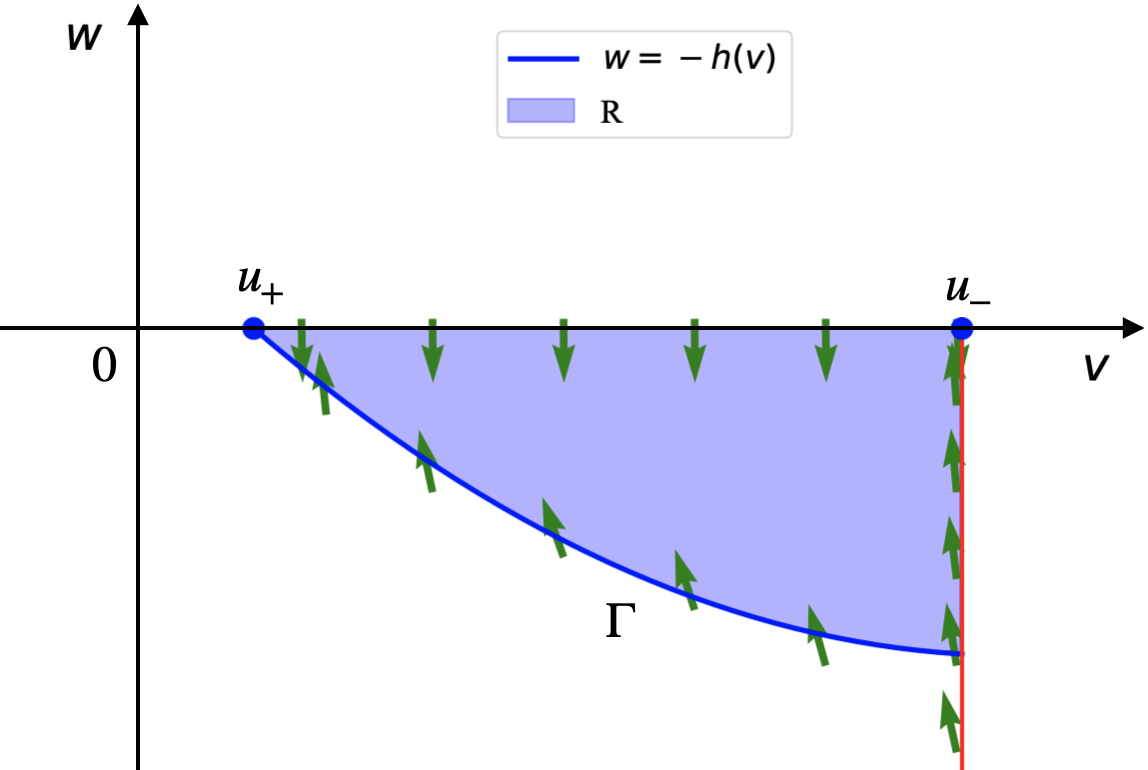}}
     	\caption{The region $R$.} \label{fig1}
  \end{figure} 

   The vector field is directed into $R$ along the horizontal
   boundary, which is a segment of the $v$ axis $[u_+,u_-]$, and the
   vertical boundary that connects the point $(u_-,0)$ with the
   intersection of the line $v=u_-$ with the graph which we denote by
   $\Gamma$, $w=-h(v)$, of some function $h$ of $v$ that belongs in
   the space
\begin{equation}\label{spaceH}
  {\mathcal H} = \{h(v) \in C^1([u_+,u_-])\, |\, h(u_+)=0, \;
  h'(v)>0, \; v \in [u_+,u_-] \}.
\end{equation}
Then $R$ will be positively invariant (and hence (\ref{twsys}) will
have a monotone orbit) if the vector field of (\ref{twsys}) along
$\Gamma$ makes an acute angle with the inward-directed normal on
$\Gamma$, i.e. if
\[
  \left(-h(v), \frac{1}{\delta} \left[ \frac{\epsilon h(v)}{\left(
          1+ h(v)^2\right)^\alpha}+g(v) \right] \right) \cdot
  (h'(v),1) \geq 0,
\]
which is equivalent to demanding that
\begin{equation}\label{ineq}
\frac{\epsilon h(v)}{(1+ h(v)^2)^\alpha}  \geq \delta
h'(v)h(v) - g(v).
\end{equation}

Now let us choose
\[
  h(v)=B(v-u_+) \in {\mathcal H}, \; \; B>0.  
\]
Thus a monotone travelling wave will exist if we can find $\epsilon$
large enough so that for some $B>0$
\[
  \frac{\epsilon}{\left(1+B^2 (v-u_+)^2\right)^\alpha} \geq
  \delta B + \frac{1}{B}\left(\lambda - \frac{f(v)-f(u_+)}{v-u_+}\right).
\]
Clearly,
\[
\frac{\epsilon}{\left(1+B^2 (v-u_+)^2\right)^\alpha} \geq
\frac{\epsilon}{\left(1+B^2 (u_--u_+)^2\right)^\alpha}.
\]
On the other hand,
\[
 \delta B + \frac{1}{B}\left(\lambda - \frac{f(v)-f(u_+)}{v-u_+}\right)
  \leq  \delta B + \frac{1}{B}\left(\lambda - f'(u_+)\right).
\]
So a monotone travelling wave will exist if there is
$B>0$ such that
\[
  \epsilon \geq  \left(1+B^2 (u_--u_+)^2\right)^\alpha
  \left[ \delta B + \frac{\lambda - f'(u_+)}{B} \right] := K(B).
\]
But the function $K(B)$ (we have omitted its dependence on
parameters that are kept fixed in this argument) is bounded from below
by some constant $K_*$, which means that if we choose
$\epsilon > K_*$, there will exist values of $B$ for which the region
$R$ is positively invariant, i.e. a monotone travelling wave solution
of (\ref{tw2}) exists.
\end{proof}

Proceeding as in Proposition 8.2 of Hadeler \cite{Hadeler2017}, we can also
prove
\begin{theorem}\label{thex1}
  For fixed $\delta$ and $\alpha$, suppose that there exists a
  monotone travelling wave for (\ref{tw2}) for some $\epsilon_1$. Then
  there exists a monotone travelling wave for all
  $\epsilon_2 > \epsilon_1$.
\end{theorem}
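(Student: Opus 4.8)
The plan is to show monotonicity of the travelling wave is preserved under increasing $\epsilon$ by a monotone-iteration / comparison argument on the first-order ODE (\ref{feq}) that governs the wave profile, exploiting the sign structure already built into the problem. Recall that a monotone wave for parameter $\epsilon_1$ gives a function $F_1 \in \mathcal{H}$ (after identifying $F=-v'$) solving
\[
  \delta F_1(v) F_1'(v) = g(v) + \frac{\epsilon_1 F_1(v)}{\left(1+F_1(v)^2\right)^\alpha}, \qquad v \in [u_+,u_-],
\]
with $F_1>0$ on $(u_+,u_-)$ and $F_1(u_\pm)=0$. The key observation is that $F_1$ is then a \emph{lower} solution (a subsolution in the appropriate sense) of the same equation at the larger value $\epsilon_2$, precisely because $F_1 \geq 0$ makes the extra term $(\epsilon_2-\epsilon_1)F_1/(1+F_1^2)^\alpha$ nonnegative. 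This is the reformulation used in Proposition 8.2 of Hadeler, and it is the natural framework here.

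First I would set up the comparison in the phase plane rather than fighting the singular behaviour of (\ref{feq}) at the endpoints directly. Concretely, I would show that the region $R$ bounded below by the $v$-axis and above by the graph $w=-F_1(v)$ of the known $\epsilon_1$-wave is positively invariant for the flow of (\ref{twsys}) at $\epsilon_2$. Along $\Gamma=\{w=-F_1(v)\}$ the inward-normal condition (\ref{ineq}) with $h=F_1$ reads
\[
  \frac{\epsilon_2 F_1(v)}{\left(1+F_1(v)^2\right)^\alpha} \geq \delta F_1'(v)F_1(v) - g(v),
\]
and since $F_1$ solves the $\epsilon_1$-equation the right-hand side equals $\epsilon_1 F_1(v)/(1+F_1(v)^2)^\alpha$; the inequality therefore reduces to $(\epsilon_2-\epsilon_1)F_1(v)/(1+F_1(v)^2)^\alpha \geq 0$, which holds because $\epsilon_2>\epsilon_1$ and $F_1\geq 0$. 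Thus the vector field at $\epsilon_2$ points into $R$ along $\Gamma$, so $R$ traps the unstable-manifold orbit leaving the saddle $(u_-,0)$ and forces it to reach $(u_+,0)$ monotonically, yielding the desired wave at $\epsilon_2$.

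The main obstacle I anticipate is the behaviour near the endpoints, where $F_1$ is only known to be $C^1$ on the closed interval and the above inward-pointing inequality degenerates to equality as $v\to u_\pm$ (both sides vanishing). At $(u_-,0)$ one must check that the unstable manifold of the saddle at $\epsilon_2$ genuinely enters $R$ rather than skirting its corner; since $(u_-,0)$ is a saddle for every admissible $\epsilon,\delta$ and Lemma~\ref{rot1} controls how the unstable eigendirection rotates with $\epsilon$, I would use this to verify that the $\epsilon_2$-unstable direction lies inside the wedge cut out by $\Gamma$ and the $v$-axis. At $(u_+,0)$, which is a stable node once $\epsilon\geq\epsilon_0(\delta)$, the trapped orbit cannot escape and must limit onto the node; here I would invoke the tangency information for the main and side manifolds together with Lemma~\ref{rot} to confirm the orbit enters along an admissible direction and stays in $R$. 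Once the endpoint analysis is settled, positive invariance of $R$ together with the standard Poincaré--Bendixson / trapping-region argument used in the proof of Theorem~\ref{thex} completes the construction, giving a monotone decreasing heteroclinic from $(u_-,0)$ to $(u_+,0)$ at $\epsilon_2$.
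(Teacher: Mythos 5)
Your proposal is correct and is essentially the paper's own argument: both trap the $\epsilon_2$-orbit in the region bounded by the $v$-axis and the graph of the $\epsilon_1$-wave, reduce the inward-pointing condition to the sign of $(\epsilon_2-\epsilon_1)\,F_1/(1+F_1^2)^\alpha$, and use Lemma~\ref{rot1} to ensure the unstable manifold of the saddle enters the region. Your extra care at the endpoint $(u_+,0)$ is a harmless elaboration of what the paper leaves implicit.
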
  

\begin{proof}
  Let us denote by $w=W_1(v)$ the monotone travelling wave
  corresponding to $\epsilon_1$ and call its graph $\Gamma_1$:
  $\Gamma_1 = \{ (v, W_1(v)\, | \, v \in [u_+,u_-] \}$.  Consider the
  set $M$ in the $(v,w)$ plane bounded by the segment
  $[u_+,u_-]$ of the $v$-axis and
  $\Gamma_1$. Let us also denote by
  $\mb{f}_\epsilon$ the vector field generated by (\ref{twsys}) and by
  $\mb{n}_{\epsilon_1}$ the inward normal on $\Gamma_1$. Then
  \begin{equation}\label{ip}
    \mb{f}_{\epsilon_2} \cdot \mb{n}_{\epsilon_1} =
	  \frac{(\epsilon_2-\epsilon_1) w^2}{(1+ w^2)^\alpha} > 0,
  \end{equation}
  which means that the vector field on the curved boundary of $M$
  points into the interior of $M$. Since for $\epsilon_2 > \epsilon_1$
  by Lemma~\ref{rot1} the unstable manifold of $(u_-,0)$ for $w<0$
points into $M$, (\ref{ip}) implies that for $\epsilon_2>\epsilon_1$
there exists a heteroclinic connection between $(u_-,0)$ and $(u_+,0)$
in $M$, i.e. a monotone travelling wave.
\end{proof}  

Combining Theorem \ref{thex} and Theorem \ref{thex1} we have  that
for fixed $\alpha$ and $\delta$, (\ref{tw2}) has monotone travelling
waves iff $\epsilon$ lies in an interval of the form $(a, \infty)$ or
$[a, \infty)$ for some $a>0$.

From the proof of Theorem~\ref{thex1} we also have the
monotonicity in $\epsilon$ result:

\begin{corollary} \label{moneps1}
  Fix $\alpha$ and $\delta$. Let $w=W_1(v)$ be the monotone travelling
  wave corresponding to $\epsilon=\epsilon_1$. If $w=W_2(v)$ is
  the monotone travelling wave corresponding to
  $\epsilon=\epsilon_2> \epsilon_1$, then for all $v \in [u_+,u_-]$,
  $W_2(v) \geq W_1(v)$.
\end{corollary}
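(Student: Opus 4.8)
The plan is to read this off directly from the trapping region constructed in the proof of Theorem~\ref{thex1}, since the corollary is really a statement about where the orbit $W_2$ is forced to live. First I would recall the set $M$ introduced there: the region in the $(v,w)$ plane bounded below by the graph $\Gamma_1 = \{(v, W_1(v)) : v \in [u_+, u_-]\}$ and above by the segment $[u_+, u_-]$ of the $v$-axis. Because $W_1$ is a monotone decreasing front connecting $(u_-, 0)$ to $(u_+, 0)$, we have $W_1(v) \le 0$ on $[u_+, u_-]$, so that
\[
  M = \{(v,w) : u_+ \le v \le u_-, \; W_1(v) \le w \le 0\},
\]
and membership in $M$ is \emph{exactly} the pair of inequalities $W_1(v) \le w \le 0$.

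Next I would invoke the two facts already established in the proof of Theorem~\ref{thex1} for the flow $\mb{f}_{\epsilon_2}$ with $\epsilon_2 > \epsilon_1$. The inner-product computation (\ref{ip}) shows that $\mb{f}_{\epsilon_2} \cdot \mb{n}_{\epsilon_1} > 0$ along $\Gamma_1$ for $w<0$, so the $\epsilon_2$ vector field points strictly into $M$ across its curved boundary; and the eigenvalue comparison of Lemma~\ref{rot1}, namely $\theta_+(\epsilon_2) < \theta_+(\epsilon_1)$, shows that the unstable manifold of the saddle $(u_-,0)$ for $\epsilon_2$ leaves that point with a smaller slope and hence lies above $\Gamma_1$, i.e. enters $M$, for $v$ slightly below $u_-$. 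Since the monotone travelling wave is the unique unstable-manifold branch of $(u_-,0)$ into the fourth quadrant, this entering orbit is precisely $W_2$. By positive invariance it cannot subsequently cross $\Gamma_1$ or the $v$-axis, so it remains in $M$ until it limits on $(u_+, 0)$. Intersecting this with the description of $M$ above gives $W_1(v) \le W_2(v) \le 0$ on $(u_+, u_-)$, and continuity together with $W_1(u_\pm) = W_2(u_\pm) = 0$ extends the inequality to the closed interval, which is exactly $W_2(v) \ge W_1(v)$.

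The only points requiring genuine care are the two corners $(u_\pm, 0)$, where the normal-component inequality in (\ref{ip}) degenerates to an equality (the factor $w^2$ vanishes), so the crude trapping argument says nothing there. At $(u_-, 0)$ this gap is closed by the slope comparison from Lemma~\ref{rot1}, which guarantees that the $\epsilon_2$ unstable manifold starts on the correct side of $\Gamma_1$; at $(u_+, 0)$ it is closed by the observation that both orbits converge to the same point, so the interior inequality simply passes to the limit. I expect the entry analysis at $(u_-,0)$ to be the main (though minor) obstacle, since it is the one place where strict positive invariance fails and one must instead argue by the local linearisation that the $\epsilon_2$ orbit emerges above $\Gamma_1$.
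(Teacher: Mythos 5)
Your argument is exactly the paper's: the corollary is read off from the positively invariant region $M$ of Theorem~\ref{thex1}, using the inner-product computation (\ref{ip}) on $\Gamma_1$ together with the slope comparison from Lemma~\ref{rot1} at the saddle $(u_-,0)$. Your extra care at the corner points is a welcome refinement but does not change the route.
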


Arguing as in Proposition 8.5 of Ref.\cite{Hadeler2017} and using
Corollary~\ref{moneps1}, we have

\begin{lemma}\label{sm}
  For a fixed $\alpha$ and $\delta$, at
  $\epsilon= \epsilon_{min}(\alpha, \delta)$ (\ref{twsys}) admits a
  monotone travelling wave that approaches $(u_+,0)$ via the side
  direction and if $\epsilon > \epsilon_{min}(\delta, \alpha)$ via the
  main direction.
\end{lemma}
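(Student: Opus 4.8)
The plan is to analyze how the monotone travelling wave approaches the rest point $(u_+,0)$ as $\epsilon$ varies, using the dichotomy between the main and side directions established earlier. The key structural fact is that $(u_+,0)$ is a stable node whenever $\epsilon > \epsilon_0(\delta)$, so that generic orbits entering it do so tangent to the main direction (the weaker/slower eigendirection corresponding to $\chi_+$), while there is a unique distinguished orbit — the side manifold — approaching tangent to the side direction corresponding to $\chi_-$. I would first recall that by the combination of Theorem~\ref{thex} and Theorem~\ref{thex1}, the set of $\epsilon$ admitting a monotone wave is a half-line $[\epsilon_{min},\infty)$ or $(\epsilon_{min},\infty)$, and then argue that the boundary case is attained, so the interval is in fact closed at $\epsilon_{min}$; the monotonicity from Corollary~\ref{moneps1} will drive the limiting argument.

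The main step is to show that for $\epsilon > \epsilon_{min}$ the wave approaches along the main direction. The plan is to argue by the monotonicity and rotation results: by Corollary~\ref{moneps1}, increasing $\epsilon$ strictly raises the graph $W_\epsilon(v)$ (lowers it in the fourth quadrant in absolute terms), and by Lemma~\ref{rot} the main direction rotates anticlockwise (flattens) while the side direction steepens as $\epsilon$ increases. If a wave approached $(u_+,0)$ along the side direction for some $\epsilon_2 > \epsilon_{min}$, I would derive a contradiction with strict monotonicity: the side manifold is a single isolated orbit, so two distinct values of $\epsilon$ cannot both realise it, and moreover the side orbit is the \emph{lowest} orbit entering the node, so any wave strictly above it (as forced by Corollary~\ref{moneps1}) must instead be tangent to the main direction. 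This identifies the side-direction approach as a codimension-one, hence non-generic, event that can occur at most at the single threshold value.

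Next I would establish that at $\epsilon = \epsilon_{min}$ the approach is precisely along the side direction. The idea is a limiting argument: take a decreasing sequence $\epsilon_n \downarrow \epsilon_{min}$, each with a monotone wave $W_{\epsilon_n}$; by Corollary~\ref{moneps1} these are monotone decreasing in $n$ and bounded below (by $0$), so they converge pointwise, and by standard compactness for the ODE \eqref{feq} the limit $W_{\epsilon_{min}}$ is itself a monotone wave, confirming that $\epsilon_{min}$ is attained. The crucial point is to show this limiting wave enters via the side direction: if it entered via the main direction, then by the openness/robustness of the main-direction (nodal) connection — which persists under small perturbation of $\epsilon$ because the node is structurally stable and the main manifold varies continuously — one could decrease $\epsilon$ slightly below $\epsilon_{min}$ and still retain a monotone wave, contradicting minimality of $\epsilon_{min}$. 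Hence the threshold wave must approach along the side direction.

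The hard part will be making the perturbation/robustness argument at $\epsilon_{min}$ fully rigorous, namely showing that a main-direction connection at $\epsilon_{min}$ would survive a decrease in $\epsilon$. This requires controlling the relative position of the unstable manifold of the saddle $(u_-,0)$ and the main manifold of the node $(u_+,0)$ as $\epsilon$ decreases, using Lemma~\ref{rot1} (which governs how the unstable direction at $(u_-,0)$ moves) together with Lemma~\ref{rot} and Corollary~\ref{moneps1}; transversality of the connection in the main direction is what makes it robust, whereas tangency to the side direction is the degenerate, non-robust configuration. I expect the delicate bookkeeping of which orbit lies above which, and the verification that the connection remains trapped in the positively invariant region of Theorem~\ref{thex}'s construction as $\epsilon$ is lowered, to be the principal technical obstacle; once the genericity of the main-direction approach and the isolation of the side manifold are pinned down, the dichotomy in the statement follows.
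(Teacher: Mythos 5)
Your proposal is correct and follows essentially the same route as the paper, which gives no explicit proof but simply invokes Proposition 8.5 of Hadeler's book together with Corollary~\ref{moneps1}: the combination of the ordering of waves in $\epsilon$, the rotation of the eigendirections from Lemma~\ref{rot}, the openness/robustness of main-direction connections, and the limiting argument at the threshold is exactly the Hadeler--Rothe mechanism being cited. The only point to tighten is your claim that ``two distinct values of $\epsilon$ cannot both realise the side manifold'' --- the side manifolds belong to different vector fields, so the clean version is the slope comparison near $(u_+,0)$: if the wave at $\epsilon_2>\epsilon_{min}$ entered with slope $\chi_-(\epsilon_2)$, it would lie strictly below the wave for any $\epsilon_1\in(\epsilon_{min},\epsilon_2)$ near $u_+$ (since $\chi_-(\epsilon_2)<\chi_-(\epsilon_1)\leq$ the entry slope at $\epsilon_1$), contradicting Corollary~\ref{moneps1}.
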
  

As a consequence of Lemma~\ref{sm}, we have that for fixed $\alpha$,
$\delta$ the set of $\epsilon \in (0, \infty)$ for which there is
monotone travelling wave of (\ref{twsys}) is of the form
$[a,+\infty):= [\epsilon_{min}(\alpha, \delta), \infty)$, where for
fixed $\alpha$ and $\delta$, $\epsilon_{min}(\alpha, \delta)$, the
minimal $\epsilon \geq 0$ for which there exists a monotone travelling
wave for (\ref{twsys}) is well-defined by the arguments above.

We can also prove the continuity of $\epsilon_{min}(\alpha, \delta)$
with respect to $\delta$.

\begin{theorem} \label{cont}
For all $\alpha \geq 0$ the mapping
$\delta \mapsto \epsilon_{min}(\delta, \alpha)$ from $(0,\infty)$ into
itself, is continuous.
\end{theorem}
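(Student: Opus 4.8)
The plan is to fix $\alpha\ge0$ and $\delta_0>0$ and prove that $\epsilon_{min}$ is both upper and lower semicontinuous at $\delta_0$. Since $(u_+,0)$ is a stable spiral for $\epsilon<\epsilon_0(\delta)$, a monotone orbit forces $\epsilon\ge\epsilon_0(\delta)>0$, so $\epsilon_{min}(\delta,\alpha)\ge\epsilon_0(\delta)>0$, while Theorem~\ref{thex} gives finiteness; hence the map really does land in $(0,\infty)$. The geometric engine is the following dichotomy, which I would isolate as a preliminary lemma: let $\gamma^u_-(\epsilon,\delta)$ be the branch of the unstable manifold of the saddle $(u_-,0)$ that enters $\{w<0\}$. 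Then exactly one of two things happens: either $\gamma^u_-$ converges to $(u_+,0)$ inside $\{u_+\le v\le u_-,\ w\le0\}$, i.e.\ it is a monotone travelling wave, or it crosses the vertical line $v=u_+$ \emph{transversally} at some finite $\zeta$ with $w<0$ (it ``overshoots''). No third possibility exists: along the open segment $\{(v,0):u_+<v<u_-\}$ the field is $(0,g(v)/\delta)$ with $g<0$, so it points straight down and $\gamma^u_-$ can never return to $w=0$ from below at an interior $v$; and $dw/dv=\frac1\delta[-\epsilon(1+w^2)^{-\alpha}+g(v)/w]$ stays bounded as $|w|\to\infty$, so $w$ cannot blow up over the compact $v$-interval $[u_+,u_-]$. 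The crucial feature is that ``overshoot'' is an \emph{open, transversal} condition.

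Lower semicontinuity I would prove by contraposition using this openness. Fix $\epsilon'<\epsilon_{min}(\delta_0,\alpha)$. Then there is no monotone wave at $(\epsilon',\delta_0)$, so by the dichotomy $\gamma^u_-(\epsilon',\delta_0)$ overshoots, crossing $v=u_+$ transversally at a finite $\zeta_0$ with $w(\zeta_0)<0$. Representing $\gamma^u_-$ as the forward orbit that leaves a fixed small neighbourhood of $(u_-,0)$ along the unstable eigendirection, continuous dependence of solutions and of the saddle's hyperbolic unstable manifold on the parameter $\delta$ (the saddle persists, with $\theta_+$ bounded away from $0$ by Lemma~\ref{rot1}) shows that $\gamma^u_-(\epsilon',\delta)$ stays uniformly close to $\gamma^u_-(\epsilon',\delta_0)$ on the compact interval $[-T,\zeta_0+1]$; hence for $\delta$ near $\delta_0$ it too crosses $v=u_+$ with $w<0$ and overshoots. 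Thus $\epsilon_{min}(\delta,\alpha)>\epsilon'$ for all $\delta$ near $\delta_0$, and letting $\epsilon'\uparrow\epsilon_{min}(\delta_0,\alpha)$ gives $\liminf_{\delta\to\delta_0}\epsilon_{min}\ge\epsilon_{min}(\delta_0,\alpha)$.

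Upper semicontinuity cannot come from the same openness (``convergence to the node'' is the \emph{closed} alternative), and I would instead build a robust trapping region using the room provided by $\epsilon>\epsilon_{min}(\delta_0,\alpha)$. Choose $\epsilon'\in(\epsilon_{min}(\delta_0,\alpha),\epsilon)$ and let $\Gamma'$ be the graph of the wave $W'$ at $(\epsilon',\delta_0)$, bounding with the $v$-axis the region $M$ of Theorem~\ref{thex1}. By (\ref{ip}) the $(\epsilon,\delta_0)$-field points strictly into $M$ along $\Gamma'$ wherever $w\ne0$; on any set bounded away from $u_\pm$ this strict inward pointing survives a small change of $\delta$ by continuity of the field in $\delta$ (uniform since $\delta_0>0$). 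At the saddle corner the unstable manifold enters $M$, which is robust by hyperbolicity. At the node corner $(u_+,0)$ I would use Lemma~\ref{sm} and Lemma~\ref{rot}: since $\epsilon'>\epsilon_{min}(\delta_0,\alpha)$, $W'$ meets $u_+$ along the main direction with slope $\chi_+(\epsilon',\delta_0)$, and $\chi_-(\epsilon,\delta_0)<\chi_+(\epsilon',\delta_0)<\chi_+(\epsilon,\delta_0)$, so the slope of $\Gamma'$ at $u_+$ lies strictly between the side and main eigendirections of the $(\epsilon,\delta_0)$-node; a sector computation shows the flow then crosses $\Gamma'$ inward there, and, this being a strict inequality on eigenvalues, it persists for $\delta$ near $\delta_0$ (the node itself persisting because $\epsilon>\epsilon_0(\delta)$ for such $\delta$). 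Hence $M$ is positively invariant for $(\epsilon,\delta)$, a monotone wave exists, so $\epsilon_{min}(\delta,\alpha)\le\epsilon$; letting $\epsilon\downarrow\epsilon_{min}(\delta_0,\alpha)$ gives the reverse inequality and, with the previous paragraph, continuity.

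Everything delicate is concentrated at the node $(u_+,0)$, where the inward margin of any trapping boundary degenerates as $w\to0$ and where the very notion of ``approach direction'' can collapse. The two things I expect to need genuine care are (i) controlling the slope of $\Gamma'$ relative to the $(\epsilon,\delta)$-eigendirections uniformly for $\delta$ near $\delta_0$, for which Lemma~\ref{rot} and the strict ordering $\epsilon>\epsilon'>\epsilon_{min}(\delta_0,\alpha)\ge\epsilon_0(\delta_0)$ are exactly what keep the node nondegenerate and the sector open; and (ii) making sure, in the dichotomy lemma, that the exclusion of ``turning around'' and of blow-up is uniform enough that the overshoot crossing used for lower semicontinuity is genuinely transversal and occurs at a uniformly finite $\zeta_0$. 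The upper-semicontinuous half is the harder one, because it is the only place where the threshold structure (side versus main approach) must be converted into a quantitative, perturbation-stable barrier.
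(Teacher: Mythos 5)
Your proposal is correct, and its first half coincides with the paper's: the paper's ``Case 2'' is exactly your lower-semicontinuity argument, namely that failure of monotonicity manifests as a transversal crossing of the line $v=u_+$ at some $w<0$, an open condition that persists under small changes of $\delta$ by continuous dependence and persistence of the saddle's unstable manifold. Where you genuinely diverge is the upper-semicontinuous half. The paper's ``Case 1'' keeps the same shape of argument: at $\epsilon=\epsilon_{min}^0+l$ the wave enters $(u_+,0)$ via the main direction, so the \emph{backward} orbit of the side manifold must exit the strip by crossing $v=u_-$ at some $w<0$; this is again an open transversal condition, and once it holds the saddle's unstable manifold is trapped between the $v$-axis and the side-manifold orbit --- a barrier that is itself an orbit of the perturbed system, so no inward-pointing estimate and no corner analysis are needed (the price is an appeal to continuous dependence of the node's strong stable manifold on $\delta$). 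You instead perturb the trapping region $M$ bounded by the wave $\Gamma'$ at a nearby smaller $\epsilon'$, which forces you to show that the strict inward pointing of \eqref{ip} survives the change of $\delta$ even where it degenerates, at both corners. Your sector argument at the node is sound --- the strict ordering $\chi_-(\epsilon,\delta)<\chi_+(\epsilon',\delta_0)<\chi_+(\epsilon,\delta)$ from Lemma~\ref{rot} does put the tangent of $\Gamma'$ in the open invariant sector, and the linearised field crosses any such ray towards the main direction with a margin linear in the distance to the node, beating the $o(s)$ corrections --- but note that the saddle corner has the same degeneracy (the margin in \eqref{ip} is $O(w^2)$ while the $\delta$-perturbation of the normal component is only $O(|w|)$ there), which your appeal to ``hyperbolicity'' does not by itself dispose of, since one must rule out escape through $\Gamma'$ at points close to $(u_-,0)$, not merely check where the unstable manifold enters. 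The clean repair, which subsumes both corners, is to run the comparison in the variable $dW/dv$ at a hypothetical first crossing point $v^*$: there the margin is $(\epsilon-\epsilon')/\bigl(\delta_0(1+w^{*2})^\alpha\bigr)$, bounded below, while the $\delta$-perturbation is controlled by $\bigl\lvert\tfrac1\delta-\tfrac1{\delta_0}\bigr\rvert\,\lvert g(v^*)/W'(v^*)\rvert$ with $g/W'$ bounded on $[u_+,u_-]$ because $W'$ vanishes linearly, with nonzero slope, exactly where $g$ does. With that adjustment your route is complete; it buys a self-contained barrier construction at the cost of quantitative estimates the paper avoids by shooting backwards from the node.
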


\begin{proof}
Let $\delta_0 >0$ and set $\epsilon_{min}^0 =
\epsilon_{min}(\delta_0,\alpha)$. We have to prove that
\[
  \forall \, l >0, \quad \exists \, k>0 \hbox{ such that }
  |\delta - \delta_0| \leq k \hbox{ implies }
  |\epsilon_{min}(\delta,\alpha) -\epsilon_{min}^0| \leq l.
\]

There are two cases to consider. 

Case 1: Fix $\epsilon = \epsilon_{min}^0 +l$. From Lemma~\ref{sm} it
follows that the heteroclinic orbit for $\epsilon= \epsilon_{min}^0$
approaches $(u_+,0)$ via the side direction. Since
$\epsilon > \epsilon_{min}^0$, the negative semi-orbit for any initial
condition on the side manifold crosses the line $v=u_-$ at some point
$(u_-, \tilde{w}(\delta_0))$, $\tilde{w}(\delta_0)<0$. Then by
continuity with respect to parameters, there is a $K_1>0$ such that
for all $\delta$ satisfying $|\delta- \delta_0| \leq K_1$, the side
manifold has the same behaviour. Thus there exists a monotone
travelling wave for this value of $\epsilon$ and any $\delta$
satisfying $|\delta- \delta_0| \leq K_1$, i.e.
$\epsilon_{min}(\delta, \alpha) < \epsilon = \epsilon_{min}^0 +l$.

Case 2: Fix $\epsilon = \epsilon_{min}^0 -l$. Since
$\epsilon < \epsilon_{min}^0$, the positive semi-orbit for any initial
condition on the unstable manifold of $(u_-,0)$ crosses the line
$v=u_+$ at some point $u_+,\hat{w}(\delta_0)$,
$\hat{w}(\delta_0)<0$. Then by continuity with respect to parameters,
there is a $K_2>0$ such that for all $\delta$ satisfying
$|\delta- \delta_0| \leq K_2$, the unstable manifold of $(u_-,0)$ has
the same behaviour, so that the heteroclinic orbit is not monotone.
Thus $\epsilon_{min}(\delta, \alpha) >  \epsilon = \epsilon_{min}^0
-l$ for all $|\delta- \delta_0| \leq K_2$.

Combining the two cases and choosing $K = \min (K_1, K_2)$  concludes
the proof. 
\end{proof}

Now fix $\alpha$. Then
\begin{equation}\label{epstilde}
\tilde{\epsilon}(\alpha) := \inf_{\delta \geq 0}
\epsilon_{\min}(\alpha,\delta) \geq 0,
\end{equation}
is well defined by Theorem \ref{cont}. We will give an estimate this quantity
in~\ref{epst}.

We also have the following monotonicity in $\alpha$ result:

\begin{theorem}\label{theoMonAlpha}
  Let us be given $0\leq \alpha_1 <\alpha_2$. Then for any
  $\delta >0$, 
  
  \noindent
  $\epsilon_{min} (\alpha_1, \delta)~\leq~\epsilon_{min} (\alpha_2, \delta).$
 \end{theorem}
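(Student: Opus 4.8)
The plan is to fix $\delta>0$ and to show that whenever (\ref{tw2}) possesses a monotone travelling wave for the larger exponent $\alpha_2$ at some value of $\epsilon$, it also possesses one for $\alpha_1$ at the \emph{same} $\epsilon$. Since, for each fixed $\alpha$, the set of admissible $\epsilon$ is the half-line $[\epsilon_{min}(\alpha,\delta),\infty)$ established above, this inclusion of admissible sets immediately yields $\epsilon_{min}(\alpha_1,\delta)\le\epsilon_{min}(\alpha_2,\delta)$. Concretely I would take $\epsilon=\epsilon_{min}(\alpha_2,\delta)$, at which a monotone wave for $\alpha_2$ exists by Lemma~\ref{sm}; denote its graph by $\Gamma_2=\{(v,W_2(v))\,|\,v\in[u_+,u_-]\}$, with $W_2<0$ on the interior, and let $M$ be the lens-shaped region bounded above by the segment $[u_+,u_-]$ of the $v$-axis and below by $\Gamma_2$.

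The first step is a vector-field comparison in the spirit of Theorem~\ref{thex1}. Write $\mb{f}_\alpha$ for the field of (\ref{twsys}) with exponent $\alpha$. Since $\mb{f}_{\alpha_1}$ and $\mb{f}_{\alpha_2}$ share the first component $v'=w$, their difference is purely vertical, and on the open arc of $\Gamma_2$ (where $w<0$) its second component is
\[
  \frac{1}{\delta}\left(-\frac{\epsilon w}{(1+w^2)^{\alpha_1}}\right)
  -\frac{1}{\delta}\left(-\frac{\epsilon w}{(1+w^2)^{\alpha_2}}\right)
  =\frac{\epsilon\,|w|}{\delta}\left(\frac{1}{(1+w^2)^{\alpha_1}}-\frac{1}{(1+w^2)^{\alpha_2}}\right)>0,
\]
because $1+w^2>1$ and $\alpha_1<\alpha_2$. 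As $\mb{f}_{\alpha_2}$ is tangent to $\Gamma_2$ and the inward normal $\mb{n}_2$ of $M$ along $\Gamma_2$ has positive vertical component, this gives $\mb{f}_{\alpha_1}\cdot\mb{n}_2>0$ on the open arc, exactly as in (\ref{ip}): the $\alpha_1$-field points into $M$ there. Along the open axis segment one has $w'=g(v)/\delta<0$, so the field points into $M$ there as well; hence $M$ is positively invariant under the $\alpha_1$-flow.

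The main obstacle is the behaviour at the corner $(u_-,0)$. Because the derivative of $w\mapsto w/(1+w^2)^\alpha$ at $w=0$ equals $1$ independently of $\alpha$, the linearisation of (\ref{twsys}) at $(u_-,0)$ — and hence the eigenvalue $\theta_+$ together with its eigenvector — does not depend on $\alpha$. Thus both unstable manifolds leave $(u_-,0)$ tangent to the \emph{same} direction, and the rotation argument of Lemma~\ref{rot1} used in Theorem~\ref{thex1} cannot separate them. I would resolve this by expanding the invariant-manifold graph $w=W_\alpha(v)$, which satisfies $\delta\,W_\alpha W_\alpha'=-\epsilon W_\alpha/(1+W_\alpha^2)^\alpha+g(v)$ by (\ref{twsys}) (equivalently (\ref{feq})), in powers of $s=v-u_-$. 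Matching orders shows that the linear and quadratic coefficients are $\alpha$-independent, while the cubic coefficient $a_3(\alpha)$ satisfies $\partial a_3/\partial\alpha=\epsilon\,\theta_+^3/(\epsilon+4\delta\theta_+)>0$. Hence $a_3(\alpha_1)<a_3(\alpha_2)$, and since $s^3<0$ for $v$ just below $u_-$,
\[
  W_{\alpha_1}(v)-W_{\alpha_2}(v)=\bigl(a_3(\alpha_1)-a_3(\alpha_2)\bigr)s^3+O(s^4)>0,
\]
so the fourth-quadrant branch $\Gamma_1$ of the $\alpha_1$-unstable manifold of $(u_-,0)$ enters the interior of $M$.

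Finally, positive invariance of $M$ under $\mb{f}_{\alpha_1}$ keeps $\Gamma_1$ in $M$ for all forward time; since $w\le 0$ there, $v$ decreases monotonically and, being bounded below by $u_+$, converges, forcing the $\omega$-limit to be an equilibrium on $\{w=0\}$, necessarily $(u_+,0)$. Thus $\Gamma_1$ is a monotone decreasing heteroclinic from $(u_-,0)$ to $(u_+,0)$ for the exponent $\alpha_1$ at $\epsilon=\epsilon_{min}(\alpha_2,\delta)$, which gives $\epsilon_{min}(\alpha_1,\delta)\le\epsilon_{min}(\alpha_2,\delta)$, as claimed. I expect the cubic-order expansion at the saddle to be the delicate point, precisely because the $\alpha$-independence of the linearisation removes the first-order separation that was available in Theorem~\ref{thex1}.
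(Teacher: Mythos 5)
Your argument is correct and shares the paper's overall strategy---show that a monotone wave for $\alpha_2$ at a given $\epsilon$ forces the fourth-quadrant unstable-manifold orbit of $(u_-,0)$ for $\alpha_1$ to lie above the $\alpha_2$-wave, hence to be a monotone wave itself---but the mechanism you use for the comparison is genuinely different. The paper stays with the scalar equation (\ref{feq}): setting $Z_i=W_i^2$ it derives the differential inequality $\frac{d}{dv}(Z_1-Z_2)\geq L(v)(Z_1-Z_2)$ via the mean value theorem applied to $z\mapsto \sqrt{z}/(1+z)^{\alpha_2}$, and closes with the integrating factor $e^{-\int L}$; the degeneracy at the saddle is absorbed there by the observation that $L(v)\sim \epsilon/(\delta\theta_+|v-u_-|)$ is non-integrable at $u_-$, so the boundary term at $u_-$ vanishes without ever identifying the order at which $W_{\alpha_1}$ and $W_{\alpha_2}$ separate. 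You instead run the invariant-region argument of Theorem~\ref{thex1} (your inner-product computation on $\Gamma_2$ is the correct analogue of (\ref{ip}), with $\alpha_1<\alpha_2$ playing the role of $\epsilon_2>\epsilon_1$) and then confront the degeneracy head-on with a third-order expansion at $(u_-,0)$; your coefficient checks out, since the order-$s^3$ balance reads $a_3(\epsilon+4\delta\theta_+)=\epsilon\alpha\theta_+^3+g'''(u_-)/6-2\delta a_2^2$, giving $\partial a_3/\partial\alpha=\epsilon\theta_+^3/(\epsilon+4\delta\theta_+)>0$ as you claim. The paper's route buys economy of hypotheses; yours buys an explicit, quantitative separation of the two unstable manifolds (cubic order, with an explicit constant), which is genuinely informative. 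The one caveat against your version as written: the paper assumes only $f\in C^2$, so $g'''(u_-)$, and hence the individual coefficient $a_3(\alpha)$, need not exist. This is repairable---the $\alpha$-dependence enters only through the smooth term $\epsilon\alpha W^3$, so the \emph{difference} $W_{\alpha_1}-W_{\alpha_2}$ still has a positive leading term of order $|v-u_-|^3$ even when $a_3$ itself is undefined, and this can be extracted from the subtracted equation without Taylor-expanding $g$ to third order---but you should either add that remark or strengthen the regularity hypothesis to $f\in C^3$.
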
 

\begin{proof}
   Fix $\delta$, and let $\alpha_1$, $\alpha_2$, $\epsilon$ satisfy
   $0\leq \alpha_1 <\alpha_2$ and
   $\epsilon >\epsilon_{min}(\alpha_2, \delta)$. Thus the travelling
   wave $v\mapsto W_2(v)$ of (\ref{tw2}) for $\alpha = \alpha_2$ is
   strictly monotone decreasing.  Let us also consider the trajectory
   for an initial condition on the unstable manifold of $(u_-,0)$
   for $\alpha = \alpha_1$. It is defined on the interval
   $[u_+, u_-]$ by a function $v\mapsto W_1(v)$. We aim to show that
   $W_1(u_+)=0$. We have
    \[
      \delta W_1 \frac{d W_1}{dv}(v) + \epsilon \frac{W_1(v)}{(1 +
        W_1^2(v))^{\alpha_1}} = g(v) = \delta W_2 \frac{d W_2}{dv}(v) +
      \epsilon \frac{W_2(v)}{(1 + W_2^2(v))^{\alpha_2}}.
 \]
Setting $Z_i = W^2_i$, we have
 \[
   \frac{d Z_1}{d v} - \frac{d Z_2}{d v} =   \frac{2\epsilon}{\delta}
   \left(  \frac{\sqrt{Z_1}}{(1 + Z_1)^{\alpha_1}} 
   -\frac{\sqrt{Z_2}}{(1 + Z_2)^{\alpha_2}}\right).
\] 
But $\alpha_1 <\alpha_2$, so we can write
\[
  \frac{d Z_1}{d v} - \frac{d Z_2}{d v} \geq  \frac{2\epsilon}{\delta}\left(  \frac{\sqrt{Z_1}}{(1 + Z_1)^{\alpha_2}} 
    -\frac{\sqrt{Z_2}}{(1 + Z_2)^{\alpha_2}}\right).
\]  
Defining the function  $z\mapsto  l(z) :=\frac{\sqrt{z}}{(1 +
  z)^{\alpha_2}}$ which is monotone increasing for small $z$, we can write
\[ 
 \frac{d Z_1}{d v} - \frac{d Z_2}{d v} \geq  \frac{2\epsilon}{\delta}
 \frac{d\,l}{dz} (z_{1,2}(v)) \left(Z_1- Z_2\right),
\] 
where $z_{1,2}(v)$ is some value between $Z_1(v)$ and $Z_2(v)$.
Define
\[ 
  L(v)= \frac{2\epsilon}{\delta} \frac{d\,l}{dz} (z_{1,2}(v)).
\]
The function $L$ is defined on $(u_+, u_-)$. Also, since $l$ is
monotone increasing in the neighbourhood of $0$ and the mappings
$v\mapsto Z_1(v)$ and $v\mapsto Z_2(v)$ are continuous at $v=u_-$, and
satisfy $Z_1(u_-) = Z_2(u_-)=0$, we obtain that $L(v)\geq 0$ in an
interval of the form $(\overline{u}, u_-)$, $\overline{u}\in (u_+, u_-)$. 
The function $L(v)$ can only go to infinity as $v \rightarrow
u_\pm$. This function is not integrable in any sub-interval of
$[u_+,u_-]$ containing $u_-$, as in a neighbourhood of $u_-$ we have
\[
  L(v) \sim \frac{\epsilon}{\delta\theta_+|v-u_-|}.
  \]
Hence $\lim_{u \rightarrow (u_-)_-} \int_{\overline{u}}^u L(u) \, du =
\infty$.
We have 
\[
 \frac{d}{d v}\left( (Z_1(v)- Z_2(v)) e^{-\int_{\overline{u}}^v L(u) du}
 \right) \geq 0.
\]
 
Integrating this formula over $[v, u]$ for any $v$ and $u$ in
$(u_+, u_-)$, $v < u$, and taking the limit as
$u \rightarrow (u_-)_-$, we have that $ Z_1(v) -Z_2(v) \leq 0$ so that
$W_1(v) \geq W_2(v)$ for all $v\in ( u_+, u_-)$. Thus, necessarily
$W_1(u_+) = 0$ and the corresponding heteroclinic travelling wave is
monotone, which means that
$\epsilon \geq \epsilon_{min}(\alpha_1, \delta)$ and hence
$\epsilon_{min} (\alpha_1, \delta)~\leq~\epsilon_{min} (\alpha_2,
\delta)$.
\end{proof}

Using arguments similar to those in Theorems~\ref{thex}
and~\ref{thex1} but with a different choice of function $h(v)$, we can
also prove a linear determinacy result for large enough $\delta$.

\begin{theorem}\label{theolin}
  For any $\alpha >0$, there exists  $\delta_{\alpha}< \infty$, such
  that for any $\delta\geq \delta_{\alpha}$,
  \[
    \epsilon_{min} (\alpha,\delta) = \epsilon_0(\delta)
  \]
and the map $\alpha \mapsto \delta_\alpha$ is monotone increasing.   
\end{theorem}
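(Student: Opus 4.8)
The plan is to prove the two one-sided bounds $\epsilon_{min}(\alpha,\delta)\ge\epsilon_0(\delta)$ and $\epsilon_{min}(\alpha,\delta)\le\epsilon_0(\delta)$ separately. The lower bound is free and holds for all $\delta,\alpha$: for $\epsilon<\epsilon_0(\delta)$ the rest point $(u_+,0)$ is a stable spiral, so any orbit reaching it has $w=v'$ oscillating in sign near $(u_+,0)$ and cannot be monotone; hence $\epsilon_{min}\ge\epsilon_0$. For the reverse bound I would, exactly as in Theorem~\ref{thex}, exhibit a barrier $h\in\mathcal H$ for which the invariance inequality (\ref{ineq}) holds at the critical value $\epsilon=\epsilon_0(\delta)$; the region $R$ is then positively invariant, the unstable manifold of the saddle $(u_-,0)$ is trapped in it, and $v$ decreases monotonically to $u_+$, producing a monotone wave at $\epsilon=\epsilon_0(\delta)$ and hence $\epsilon_{min}(\alpha,\delta)=\epsilon_0(\delta)$.

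The ``different choice of $h$'' is forced by the geometry at $\epsilon=\epsilon_0(\delta)$, where by (\ref{eigs}) the node is degenerate with the single eigendirection of slope $\chi=-\epsilon_0/(2\delta)=-\sqrt{r/\delta}$, writing $r:=-g'(u_+)=\lambda-f'(u_+)>0$. I would take the linear barrier tangent to this direction, $h(v)=B_0(v-u_+)$ with $B_0=\sqrt{r/\delta}$ (so $\delta B_0^2=r$ and $\epsilon_0B_0=2r$). With $s:=v-u_+\in(0,L]$, $L:=u_--u_+$, and $\psi(s):=-g(u_++s)/s>0$, substituting into (\ref{ineq}) and dividing by $s>0$ reduces the requirement to
\begin{equation}\label{Gineq}
  \frac{2r}{(1+rs^{2}/\delta)^{\alpha}}\ \ge\ r+\psi(s),\qquad s\in(0,L].
\end{equation}

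The main obstacle — and the reason the barrier of Theorem~\ref{thex} cannot be reused verbatim — is that (\ref{Gineq}) holds with \emph{equality} at $s=0$, this being precisely the tangency to the degenerate eigendirection. Everything therefore hinges on a competition between two second-order effects as $s\to0^+$. On the favourable side the strict convexity of $f$ gives room: with $m:=\min_{[u_+,u_-]}f''>0$, integrating $(-g)''=-f''\le-m$ yields $\psi(s)\le r-\tfrac m2 s$, i.e. $r-\psi(s)\ge\tfrac m2 s$. On the unfavourable side the saturation factor costs; the convexity estimate $(1+x)^{-\alpha}\ge1-\alpha x$ bounds the defect by $2r\bigl(1-(1+rs^2/\delta)^{-\alpha}\bigr)\le 2\alpha r^{2}s^{2}/\delta$. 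Writing $S(s)$ for the slack in (\ref{Gineq}),
\[
  S(s)=\bigl(r-\psi(s)\bigr)-2r\bigl(1-(1+rs^{2}/\delta)^{-\alpha}\bigr)
  \ \ge\ s\Bigl(\tfrac m2-\tfrac{2\alpha r^{2}}{\delta}\,s\Bigr),
\]
which is nonnegative on all of $(0,L]$ as soon as $\delta\ge\delta_\alpha:=4\alpha r^{2}L/m$. Thus for $\delta\ge\delta_\alpha$ the region $R$ is positively invariant and a monotone wave exists at $\epsilon=\epsilon_0(\delta)$, giving $\epsilon_{min}(\alpha,\delta)=\epsilon_0(\delta)$.

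Monotonicity of $\alpha\mapsto\delta_\alpha$ is then immediate, since $\delta_\alpha=4\alpha r^{2}L/m$ is linear and increasing in $\alpha$ ($r,L,m$ depending only on $f$ and $u_\pm$). One could alternatively record that the \emph{true} linear-determinacy threshold is nondecreasing in $\alpha$ by Theorem~\ref{theoMonAlpha}: if $\epsilon_{min}(\alpha_2,\delta)=\epsilon_0(\delta)$ and $\alpha_1<\alpha_2$, then $\epsilon_0(\delta)\le\epsilon_{min}(\alpha_1,\delta)\le\epsilon_{min}(\alpha_2,\delta)=\epsilon_0(\delta)$, so the $\alpha_1$ problem is linearly determined at the same $\delta$.
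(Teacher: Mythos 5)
Your proof is correct, but it takes a genuinely different route from the paper's. Both arguments are barrier constructions resting on the invariance inequality (\ref{ineq}), yet the barriers differ: the paper takes the nonlinear barrier $h(v)=-g(v)/\bigl(\sqrt{\delta}\sqrt{-g'(u_+)}\bigr)$, whose graph joins $(u_+,0)$ to $(u_-,0)$, and verifies (\ref{ineq}) by showing the associated function $K(v)$ is decreasing for $\delta$ large, so that its maximum is $K(u_+)=\epsilon_0(\delta)/\sqrt{\delta}$; because that barrier terminates at the saddle, the paper must additionally check $\theta_+<-h'(u_-)$ to ensure the unstable manifold of $(u_-,0)$ enters the trapping region. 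You instead take the linear barrier tangent to the degenerate eigendirection of the node at $\epsilon=\epsilon_0(\delta)$ and resolve the resulting equality at $s=0$ by a second-order Taylor competition, $r-\psi(s)\geq ms/2$ against the saturation defect $\leq 2\alpha r^2 s^2/\delta$; both estimates check out ($g''=f''\geq m$ gives the first, the Bernoulli-type bound $(1+x)^{-\alpha}\geq 1-\alpha x$ the second). What your approach buys is an explicit sufficient threshold $\delta_\alpha=4\alpha r^2L/m$, increasing in $\alpha$ by inspection, whereas the paper's proof only records an implicit ``$\delta$ sufficiently large'' and defers a (lower) estimate of $\delta_\alpha$ to \ref{epst}; moreover your region has a vertical right edge at $v=u_-$, so the unstable manifold enters it automatically. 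You also make explicit the standing inequality $\epsilon_{min}(\alpha,\delta)\geq\epsilon_0(\delta)$ via the spiral argument, which the paper leaves implicit. Your closing derivation of the monotonicity of the true threshold from Theorem~\ref{theoMonAlpha} is exactly the paper's argument for that part of the statement.
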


\begin{proof}
  Fix
  $\epsilon \geq \epsilon_0 (\delta) = 2
  \sqrt{\delta}\sqrt{-g'(u_+)}$. We aim to show that for
  $\delta \geq \delta_\alpha$,
  $\epsilon~\geq~\epsilon_{min}(\alpha, \delta)$.
  
  Let us consider the system (\ref{twsys}). The positive eigenvalue
  $\theta_+$ of the linearisation at $(u_-, 0)$ is
\[  
  \theta_+ = \frac{1}{2}\left( -\frac{\epsilon}{\delta} +
    \sqrt{\frac{\epsilon^2}{\delta^2} + \frac{4
        g'(u_-)}{\delta}}\right) = \frac{2 g'(u_-)}{\epsilon +
    \sqrt{\epsilon^2 + 4 \delta g'(u_-)}}.
\]
Consider now the function 
\[
  h (v) = -\frac{1}{\sqrt{\delta}\sqrt{-g'(u_+)}} g(v).
\]  


Then $-h (v)$ satisfies
\[
  -h'(u_-) = \frac{1}{\sqrt{\delta}\sqrt{-g'(u_+)}} g'(u_-).
\]
But for  $\epsilon\geq \epsilon_0(\delta)$, we have
\[
  \theta_+ <  \frac{2 g'(u_-)}{\epsilon} \leq -h'(u_-).
\]  
Thus, the graph of $-h$ is below the heteroclinic orbit (not
necessarily monotone at this stage) in a neighbourhood of $(u_-,0)$. If
the condition (\ref{ineq}) is satisfied, this travelling wave will
remain above the graph of $-h$ and thus will indeed be
monotone. (\ref{ineq}) can be written as
\[
\frac{\epsilon}{\sqrt{\delta}} \geq K(v) := \left( -
  \frac{1}{\sqrt{-g'(u_+)}} g'(v)+ \sqrt{-g'(u_+)} \right) \left( 1  -
  \frac{1}{\delta g'(u_+) } g^2(v) \right)^\alpha,
\]
for all $v\in [u_+, u_-]$. We have

\begin{align*} 
& K'(v) = - \frac{1}{\sqrt{-g'(u_+)}} g''(v) \left( 1  - \frac{1}{\delta
    g'(u_+) } g^2(v) \right)^\alpha\\
&-\frac{2 \alpha}{\delta g'(u_+) } g(v) g'(v) \left( - \frac{1}{\sqrt{-g'(u_+)}} g'(v)+ \sqrt{-g'(u_+)} \right)
\left( 1  - \frac{1}{\delta g'(u_+) } g^2(v) \right)^{\alpha-1}.
\end{align*}

But $g''(v) =f''(v) \geq C > 0$ for all $v\in [u_+, u_-]$. Thus for $\delta$
sufficiently large, 
\[
  K'(v) \leq  - \frac{C}{2\sqrt{-g'(u_+)}} <0.
\]  
It follows that $K(v) \leq K(u_+) = 2 {\sqrt{-g'(u_+)}}$ for $\delta$
sufficiently large, and hence for all $\delta \geq \delta_\alpha$ and
$\epsilon \geq \epsilon_0(\delta)$, the wave is monotone and hence
$\epsilon_{min}(\alpha,\delta) \leq \epsilon_0(\delta)$, but then
these two must be equal.

The last part of the theorem follows directly from
Theorem~\ref{theoMonAlpha}.
\end{proof}

An estimate of  $\delta_\alpha$ will be given in \ref{epst}.

\noindent
{\bf Remark:}  When $\alpha =0$, $K'(v) <0$ for any
$\delta >0$, and thus we recover that
$\epsilon_{min}(0,\delta) = \epsilon_{0}(\delta)$ for all $\delta >0$ in
the KPP case.

Using geometric singular perturbation theory, \cite{Hek,Jones} we have
the following existence result for small values of $\delta$, which
allows us to characterise $\epsilon_{min}(\alpha, \delta)$ for all
$\alpha \geq 0$.

\begin{theorem}\label{smdelta}
  Assume $\alpha \geq 0$ and let $\epsilon_{min}(\alpha)$ be given
  by~(\ref{emin2}).  Fix $\epsilon$ such that
  \begin{equation}\label{case1}
    \epsilon > \begin{cases}
      \epsilon_{min}(\alpha) & if $\quad \alpha > 1/2$,\\
      S= \lim_{\alpha \rightarrow (1/2)_+} \epsilon_{min}(\alpha) &
      if $\quad \alpha=1/2$\\
      0 & if $\quad \alpha< 1/2$.
    \end{cases}
\end{equation}
    
 where $S$ is defined by~(\ref{S}),  or
  
    \begin{equation}\label{case2}
  \epsilon =  \epsilon_{min}(\alpha) \quad \text{if}\quad  \alpha > 1/2.
    \end{equation}
  
  Then  there exists $\delta_0(\alpha,\epsilon) >0$ such that for all
  $0 < \delta < \delta_0(\alpha,\epsilon)$

  (i) (\ref{twsys}) admits a monotone
  travelling wave;

   (ii)  the monotone
   travelling wave enters $(u_+,0)$ through the main direction.
 \end{theorem}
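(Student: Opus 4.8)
The plan is to treat (\ref{twsys}) as a slow--fast system with $\delta$ as the singular parameter and to apply geometric singular perturbation theory in the spirit of \cite{Hek,Jones}. Writing the second equation of (\ref{twsys}) as $\delta w' = -p(w) + g(v)$ with $p(w) := \epsilon w/(1+w^2)^\alpha$, the variable $v$ is slow and $w$ is fast. Setting $\delta = 0$ confines the dynamics to the critical manifold $\mathcal{M}_0 = \{(v,w): g(v) = p(w)\}$, on which the reduced flow is $v' = w$; setting instead $\xi = \zeta/\delta$ gives the layer problem $\dot w = -p(w)+g(v)$ with $v$ frozen. First I would record the elementary properties of $p$: a computation gives $p'(w) = \epsilon\,\bigl(1+(1-2\alpha)w^2\bigr)/(1+w^2)^{\alpha+1}$, so $p$ is strictly increasing on $w<0$ when $\alpha \le 1/2$, while for $\alpha > 1/2$ it has a unique interior minimum on $w<0$ at $w_* = -1/\sqrt{2\alpha-1}$, with minimal value $-p_{\max}$ and $p_{\max}$ proportional to $\epsilon$. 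The branch $\{w \in (w_*,0)\}$ (all of $w<0$ when $\alpha\le 1/2$) is where $p'>0$, hence where $-p'(w)<0$ and $\mathcal{M}_0$ is normally hyperbolic and attracting for the layer problem.

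Second, I would show that under the strict hypotheses (\ref{case1}) the attracting part $\mathcal{M}_0^a$ of $\mathcal{M}_0$ is exactly the graph of a single function $w = W_0(v)$, $v \in [u_+,u_-]$, with $W_0(u_\pm) = 0$, $W_0(v)<0$ on $(u_+,u_-)$, and $W_0$ bounded strictly away from the fold value $w_*$. Indeed, since $-g$ is monostable and concave by (\ref{gv}), $g$ is convex with $g(u_\pm)=0$ and a single interior minimum $g_{\min}<0$; the equation $g(v)=p(w)$ is solvable on the attracting branch for every $v\in[u_+,u_-]$ precisely when the range of $p$ on $(w_*,0)$ contains $[g_{\min},0)$, i.e. when $g_{\min} > -p_{\max}$. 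This inequality is exactly the strict condition $\epsilon > \epsilon_{min}(\alpha)$ for $\alpha>1/2$ (resp. $\epsilon > S$ for $\alpha = 1/2$, and automatic for $\alpha<1/2$ since then $p$ maps $(-\infty,0)$ onto $(-\infty,0)$). On $\mathcal{M}_0^a$ the reduced flow $v' = W_0(v) < 0$ is a monotone decreasing singular heteroclinic joining $(u_-,0)$ to $(u_+,0)$.

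Third, I would invoke Fenichel's theorem: restricting $\mathcal{M}_0^a$ to a compact piece bounded away from the equilibria, for all small $\delta>0$ there is a locally invariant manifold $\mathcal{M}_\delta$, $O(\delta)$-close to $\mathcal{M}_0^a$ and carrying a flow that is an $O(\delta)$ perturbation of $v'=W_0(v)$. It remains to close the connection at the two rest points. The key observation is that the slow eigenvalues there are the $O(1)$ ones: at $(u_-,0)$ the unstable eigenvalue $\theta_+\to g'(u_-)/\epsilon>0$ as $\delta\to0$, and at $(u_+,0)$ the main eigenvalue $\chi_+\to g'(u_+)/\epsilon<0$ (cf. (\ref{eigs})), while the companion eigenvalues blow up like $-\epsilon/\delta$. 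Hence both equilibria lie on $\mathcal{M}_0^a$ with their slow eigendirections tangent to it; consequently the relevant branch of the unstable manifold of the saddle $(u_-,0)$ lies on $\mathcal{M}_\delta$, and the orbit it carries descends along $\mathcal{M}_\delta$ and enters the stable node $(u_+,0)$ tangent to the slow, i.e. \emph{main}, direction (note $\epsilon>\epsilon_0(\delta)$ automatically for small $\delta$, so $(u_+,0)$ is indeed a node). Since $w<0$ along the whole of $\mathcal{M}_\delta$ between the equilibria, the resulting heteroclinic is monotone, which is (i), and it enters $(u_+,0)$ through the main direction, which is (ii).

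The main obstacle is the passage at the degenerate points. Fenichel's theorem is stated for normally hyperbolic pieces bounded away from equilibria, so the entry into $(u_\pm,0)$—where the reduced flow vanishes—must be closed by matching $\mathcal{M}_\delta$ to the local (un)stable manifolds of the saddle and node; here I expect to need the Exchange Lemma of \cite{Jones}, or a direct local comparison exploiting that the slow eigendirections are tangent to $\mathcal{M}_0^a$. The genuinely delicate case is (\ref{case2}), $\epsilon = \epsilon_{min}(\alpha)$ with $\alpha>1/2$: then $g_{\min} = -p_{\max}$, the attracting and repelling branches of $\mathcal{M}_0$ meet at the fold point $(v_{\min},w_*)$, and normal hyperbolicity is lost there. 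Because $g'(v_{\min})=0$ and $p'(w_*)=0$ simultaneously, a local expansion shows $p(w)-g(v)\approx \tfrac12 p''(w_*)(w-w_*)^2-\tfrac12 g''(v_{\min})(v-v_{\min})^2$, so the two branches cross \emph{transversally} rather than forming a standard fold; nonetheless the reduced flow crosses in $v$ transversally since $w_*\neq 0$. I would resolve this passage by a blow-up/canard analysis localised at $(v_{\min},w_*)$, showing that for small $\delta$ the perturbed orbit stays on the attracting branch through the crossing and continues to $(u_+,0)$ without detaching onto the repelling branch. Establishing this controlled passage, and hence persistence of the monotone connection exactly at $\epsilon=\epsilon_{min}(\alpha)$, is the hardest step.
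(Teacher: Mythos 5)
For the non-degenerate case (\ref{case1}) your argument is essentially the paper's: write the system in the slow--fast form, observe that for $\delta=0$ the critical set is the graph of a single function $w=W_0(v)$ precisely under the strict inequalities in (\ref{case1}), apply Fenichel/Jones to get persistence, and deduce (ii) from the fact that $W_0'(u_+)$ is finite while $\chi_-(\delta)\to-\infty$. That part is sound, and your identification of the threshold $g_{\min}>-p_{\max}$ with $\epsilon>\epsilon_{min}(\alpha)$ (resp.\ $\epsilon>S$) is the right way to see why the hypotheses are exactly what is needed.

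The genuine gap is case (\ref{case2}), $\epsilon=\epsilon_{min}(\alpha)$ with $\alpha>1/2$. You correctly diagnose that normal hyperbolicity fails at the point where the two branches of the critical set cross (and your local expansion showing a transversal, transcritical-type crossing rather than a fold is right --- this is why the paper mentions Krupa--Szmolyan), but you then only announce that you ``would resolve this passage by a blow-up/canard analysis'' and concede that establishing the controlled passage is the hardest step. That step is the content of the theorem in this case, so as written the proposal does not prove it; moreover a blow-up at the crossing point would typically only control the orbit in a neighbourhood of $(\bar u,\bar w)$ and one would still have to rule out detachment onto the repelling branch on the way to $u_+$. The paper avoids blow-up entirely: it takes the explicit line $h(v)=B(v-u_+)$ with $B=-2g'(u_+)/\epsilon$, verifies the invariance inequality (\ref{ineqB}) near $u_+$ for $\delta\le\tilde\delta=-\epsilon^2/(8g'(u_+))$, and then uses a compactness bound $d_\epsilon\ge C_{12}>0$ on the explicit box $K_{12}$ together with the integrated identity $\tfrac12\frac{d}{dv}W^2=d_\epsilon/\delta$ to force the heteroclinic above $K_{12}$ for $\delta\le\delta_0$ (Lemma \ref{Ldel0}); monotone entry along the main direction then follows by comparing $B$ with $\chi_-(\delta)$. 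If you want to complete your write-up you should either carry out the blow-up/entry--exit analysis in full (a substantial undertaking) or replace it with a direct trapping argument of this kind; also note that Theorem \ref{thint} shows the constant $\delta_0(\alpha,\epsilon)$ genuinely must depend on $\epsilon-\epsilon_{min}(\alpha)$ in case (\ref{case1}) and cannot be dispensed with in case (\ref{case2}), so any argument must be quantitative in $\delta$.
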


\begin{proof} 
Assume that we have (\ref{case1}). Write (\ref{twsys}) equivalently as
  \begin{equation}\label{sp}
    \begin{cases}
      v'  = w, & \\
     \delta w' = \left[- \frac{\epsilon
          w}{(1+w^2)^\alpha}+g(v)
      \right]. &\\
    \end{cases}
   \end{equation} 
(i) Note that in each one of the cases of the theorem, for $\delta=0$, the
right-hand side of the second equation defines $w$ as a function of of
$v$, say $w=W_+(v)$ and $W_+(u_\pm)=0$. This means that for $\epsilon$
satisfying the conditions of the theorem, there exists a smooth
monotone travelling wave of (\ref{sp}) for $\delta=0$. But then we
can appeal to geometric singular perturbation theory \cite{Hek,Jones}
and, using Fenichel's First Theorem in the graph form as in
Theorem 2 of Ref.\cite{Jones}, deduce the existence of monotone travelling
waves for $\delta$ sufficiently small.

(ii) $C^1$ closeness in $\delta$ of the graph of $W_+(v)$ and the
monotone travelling wave solutions of (\ref{twsys}) for $\delta>0$
small is guaranteed by Theorem 2 of Ref.\cite{Jones}. However,
  $W_+'(u_+)$ is finite while $\lim_{\delta \rightarrow 0_+}
  \chi_-(\delta)= -\infty$, from which it follows that for
  sufficiently small $\delta>0$ the monotone travelling wave cannot
  enter via the side manifold.

Now, let us consider the singular case (\ref{case2}), i. e.
$\epsilon=\epsilon_{min}(\alpha)$ with $\alpha>1/2$. It
would be interesting to prove this statement using blowup techniques
of, for example, Krupa and Szmolyan, \cite{KS2001} but we find it
easier to argue directly by using properties of the vector field. 

Note that if $\alpha>1/2$, $\epsilon = \epsilon_{min}(\alpha)$, the
curves $w=W_\pm(v)$ used in the proof of the case (\ref{case1}) are
still defined, though now with $W_+(v) \geq W_-(v)$ for all
$V \in [u_+,u_-]$.  But we have $W_+(v)=W_-(v) := \bar{w}$ at the
point $\bar{u}$ defined by $g(\bar{u}) = -S$. This is the point of
global minimum for $W_+(v)$.  Note also that this curve has a corner at the point $(\bar{u}, \bar{w})$. We have
\[
  W_+'(u_+) = \frac{g'(u_+)}{\epsilon}.
\]
To prove existence of monotone heteroclinic connections between
$(u_-,0)$ and $(u_+,0)$ we use the logic of Theorem~\ref{thex}. 
Thus, we need to show that there exists $B>0$ such that
\vspace{-0.2cm}
\begin{equation}\label{ineqB}
  \frac{\epsilon}{(1+ B^2 (V-u_+)^2)^\alpha} > 
  \delta B +  \frac{1}{B}\left(\lambda - \frac{f(V)-f(u_+)}{V-u_+}\right) =
  \delta B -  \frac{g(V)}{B(V-u_+)}.
\end{equation}
First of all, let us choose $B$. Take
\begin{equation}\label{cB}
  B = -2 W_+'(u_+) = \frac{-2g'(u_+)}{\epsilon}.
\end{equation}
Now, if $v=u_+$, if
\[
  \delta \leq \tilde{\delta}= - \frac{\epsilon^2}{8 g'(u_+)},
\]
we have that the inequality (\ref{ineqB}) holds. Hence by continuity
there is $v_1 \in [ u_+, u_-]$ such that (\ref{ineqB}) holds for all $
v \in [u_+, v_1]$ and that in  addition $h(v)=B(u_+-v)< W_+(v)$ for
all $v$ in that interval.

Setting $v_2= (u_++ v_1)/2$,  we define the compact set
\begin{equation}\label{K12}
  K_{12}= \{ (v,w) \, | \, v_2 \leq v \leq v_1, \; \bar{w} \leq w \leq
  h(v) \}, 
\end{equation}
as in Figure \ref{fig2}. Let
$d_\epsilon(v,w) = {\displaystyle g(v) - \frac{\epsilon
    w}{(1+w^2)^\alpha}}$, so that the union of graphs of $w=W_\pm(v)$
corresponds to the set $d_\epsilon(v,w)=0$. As $K_{12}$ is disjoint
from the graph of $W_+(v)$ and on it $d_\epsilon(v,w)>0$, by
compactness we have that there exists $C_{12} >0$ such that for all
$(v,w) \in K_{12}$, 

\[
  d_\epsilon (v,w) \geq \min_{(v,w) \in K_{12}} d_\epsilon(v,w) =
  C_{12}.
\]
Now we can define our value of $\delta_0(\alpha, \epsilon)$.
Let
\begin{equation}\label{del0}
  \delta_0(\alpha, \epsilon) := \min \left( \tilde{\delta},
    \frac{2C_{12}(v_1-v_2)}{\bar{w}^2} \right) > 0.
\end{equation}
We have
\begin{lemma}\label{Ldel0}
  For all $0< \delta \leq \delta_0(\alpha,\epsilon)$, there is
  $v_3 \in [v_2,v_1]$ such that the segment of the heteroclinic orbit 
  connecting $(u_+,0)$ and $(u_-,0)$ lying $[u_+,u_-]$, which we
  denote by $W(v)$, satisfies $W(v_3) > h(v_3)$
  (i.e. at that point it lies above the set $K_{12}$).
\end{lemma}

\begin{proof}
  First of all note that by Lemma~\ref{rot1}, for $\delta>0$ the
  heteroclinic orbit connecting $(u_+,0)$ and $(u_-,0)$ crosses the
  graph of $W_+(v)$ at a value of $v < \bar{u}$ and $W_+'(v)$ is
  negative for all $v < \bar{u}$.

  Now assume by contradiction that $W(v)$ stays in $K_{12}$ for all
  $v \in [v_1,v_2]$. for $v$ in that interval, $W(v)$ satisfies

  \[
    W \frac{dW}{dv} = \frac{d_\epsilon(v,{W})}{\delta},
  \]
  integrating which over $[v_1, v_2]$ we have
  \[
    \frac12 (W(v_1)^2-W(v_2)^2) \geq \frac{C_{12}(v_1-v_2)}{\delta}
    \geq \frac{\bar{w}^2}{2},
   \]
   by definition of $\delta_0(\alpha,\epsilon)$. This inequality however is
   impossible by the definition of $\bar{w}$. 
\end{proof}
\begin{figure}[H]
    	\centerline{\includegraphics[width=0.55\textwidth]{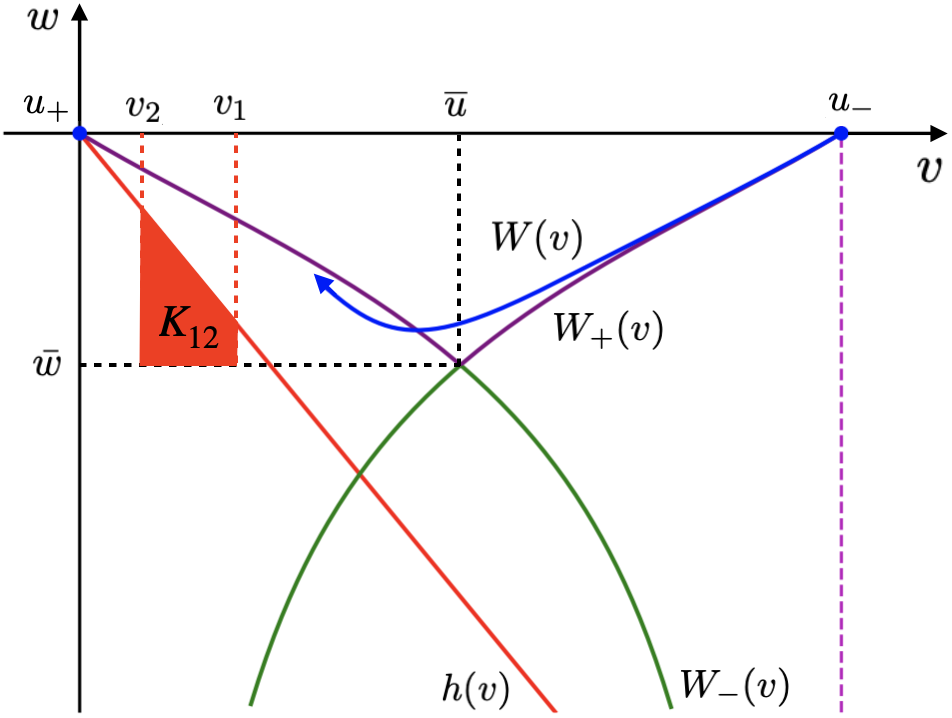}}
     	\caption{Construction of the set $K_{12}$.} \label{fig2}
\end{figure} 

From Lemma~\ref{Ldel0}, the inequality (\ref{ineqB}) therefore holds
at $v_3$ and hence for all $v~\in~[u_+, v_3]$ and therefore $W(v)$
converges to $(u_+,0)$ monotonically.  

Now, for $\delta \leq \delta_0(\alpha,\epsilon)$,  
$$\chi_-(\delta) \leq -\frac{\epsilon}{ 2 \delta} \leq -\frac{\epsilon}{ 2 \tilde{\delta}} \leq \frac{4 g'(u_+)}{\epsilon} < -B.$$

So the slope of $h(v)$ is greater
than $\chi_-(\delta)$, and hence the convergence is via the main direction. 
\end{proof}

\noindent {\bf Remark:} The method used for (\ref{case2}) is of course
applicable in the case (\ref{case1}).
\section{Results for $\alpha \in (0, 1/2]$} \label{less}

First of all, we have the following result

\begin{lemma} \label{c0}
  For all $\alpha \leq 1/2$ we have that
  \begin{equation}\label{est}
    \epsilon_{min}(\alpha,\delta) \geq \delta^{1/2-\alpha} C_\alpha,
  \end{equation}
  where $C_\alpha>0$.
\end{lemma}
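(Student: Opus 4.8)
The plan is to extract from \eqref{feq} two integral relations and play them against each other. Throughout, write $F$ for the profile $v\mapsto -v'$ of a monotone wave, so that $F\in C^1([u_+,u_-])$, $F(u_\pm)=0$ and $F>0$ on $(u_+,u_-)$, and recall that $-g>0$ on $(u_+,u_-)$. Rewriting \eqref{feq} as $\tfrac{\delta}{2}(F^2)' = \tfrac{\epsilon F}{(1+F^2)^\alpha}+g(v)$ and integrating over $[u_+,u_-]$, the left-hand side vanishes because $F(u_\pm)=0$, which gives the exact identity
\begin{equation*}
\epsilon\int_{u_+}^{u_-}\frac{F(v)}{(1+F(v)^2)^\alpha}\,dv = I_g := \int_{u_+}^{u_-}(-g(v))\,dv>0. \tag{A}
\end{equation*}
This is a clean necessary condition: a monotone wave spends a fixed ``budget'' $I_g$ that is independent of $\epsilon$, $\delta$ and $\alpha$.

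Next I would derive an a priori bound on $F_{\max}:=\max_{[u_+,u_-]}F$ in terms of $\delta$. Letting $v^*$ be a point where the maximum is attained and integrating the same relation over $[u_+,v^*]$, the term $\int_{u_+}^{v^*}g\,dv\le 0$ drops out and the nonnegative integrand is bounded by its integral over all of $[u_+,u_-]$, so by (A),
\begin{equation*}
\tfrac{\delta}{2}F_{\max}^2 \le \epsilon\int_{u_+}^{u_-}\frac{F}{(1+F^2)^\alpha}\,dv = I_g, \qquad\text{hence}\qquad F_{\max}\le \sqrt{2I_g/\delta}. \tag{B}
\end{equation*}
Here the restriction $\alpha\le 1/2$ becomes essential: for such $\alpha$ the map $s\mapsto s/(1+s^2)^\alpha$ is increasing on $[0,\infty)$ (its derivative has numerator $1+(1-2\alpha)s^2>0$), so the integrand in (A) is maximised at $F=F_{\max}$, and moreover $F_{\max}/(1+F_{\max}^2)^\alpha\le F_{\max}^{\,1-2\alpha}$ with the \emph{nonnegative} exponent $1-2\alpha$.

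Combining these, I bound the integral in (A) by $(u_--u_+)F_{\max}^{\,1-2\alpha}$ and insert (B); since $1-2\alpha\ge0$ this yields $\int_{u_+}^{u_-}\tfrac{F}{(1+F^2)^\alpha}\,dv \le (u_--u_+)(2I_g)^{1/2-\alpha}\delta^{\alpha-1/2}$. Substituting back into (A) and solving for $\epsilon$ gives $\epsilon\ge C_\alpha\,\delta^{1/2-\alpha}$ with $C_\alpha = I_g^{1/2+\alpha}/((u_--u_+)\,2^{1/2-\alpha})>0$. To apply this at the threshold without worrying about regularity, I would run the argument for every $\epsilon>\epsilon_{min}(\alpha,\delta)$, where by Lemma~\ref{sm} the wave enters $(u_+,0)$ through the main direction and is therefore $C^1$, and then let $\epsilon\downarrow\epsilon_{min}(\alpha,\delta)$ to conclude \eqref{est}.

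The genuine content, and the step I expect to be the crux rather than a routine computation, is the coupling: identity (A) and bound (B) share the same constant $I_g$, so the exact budget simultaneously controls the integral and caps $F_{\max}^2\sim\delta^{-1}$; the saturating, monotone nonlinearity then converts the pointwise cap on $F$ into a matching cap on the integral, and the exponent $1/2-\alpha$ falls out of $F_{\max}\sim\delta^{-1/2}$ fed through the $F^{\,1-2\alpha}$ growth. The main things to watch are the sign bookkeeping ($g<0$, $-g>0$) and that the argument degenerates correctly: for $\alpha=0$ it returns $\epsilon_{min}\gtrsim\sqrt{\delta}$, consistent with the exact KPP value $\epsilon_0(\delta)=2\sqrt{\delta}\sqrt{-g'(u_+)}$ from \eqref{e0}, while for $\alpha>1/2$ the monotonicity of $s/(1+s^2)^\alpha$ fails and the exponent $1-2\alpha$ turns negative, which is exactly why the bound is confined to $\alpha\le1/2$.
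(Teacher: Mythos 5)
Your argument is correct and is essentially the paper's own proof in a mildly reorganised form: both integrate the profile equation \eqref{feq} over the full interval to eliminate the $\delta F F'$ term, derive the a priori bound $F^2\le 2K/\delta$ by integrating over a partial interval, and exploit $(1+s^2)^\alpha\ge s^{2\alpha}$ for $\alpha\le 1/2$, arriving at the identical constant $C_\alpha=2^{\alpha-1/2}K^{\alpha+1/2}/(u_--u_+)$. The only cosmetic difference is that the paper multiplies through by $F^{2\alpha-1}$ and keeps $|g|$ as a weight inside the integral, whereas you first isolate the clean identity $\epsilon\int F/(1+F^2)^\alpha\,dv=K$ and then bound the integrand pointwise.
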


\begin{proof}
  If a monotone travelling wave exists, from (\ref{feq}), we have that
\[
   \epsilon F(v)^{1-2\alpha} \geq \epsilon
   \frac{F(v)}{(1+F(v)^2)^\alpha} = \delta F(v)F'(v) -g(v).
 \]
 Hence
 \begin{equation} \label{e00}
   \epsilon \geq \frac{\delta}{2\alpha+1} \left( F(v)^{2\alpha+1}
   \right)' -g(v) F(v)^{2\alpha-1}.
 \end{equation}
 Integrating over $[u_+.u_-]$, we obtain
 \begin{equation} \label{e1}
   \epsilon (u_--u_+) \geq \int_{u_+}^{u_-} |g(v)| F(v)^{2\alpha-1} \,
   dv.
 \end{equation}
 On the other hand, integrating (\ref{feq}) over $[v, u_-]$, $v \in
 [u_+,u_-]$, we have
 \[
  -\delta \frac{F(v)^2}{2} = \int_v^{u_-} g(s) \, ds +
   \epsilon\int_v^{u_-} \frac{F(s)}{(1+F(s)^2)^\alpha} \, ds \geq 
\int_v^{u_-} g(s) \, ds.
 \]
 Hence
 \[
   F(v)^2 \leq \frac{2}{\delta} \int_{v}^{u_-} |g(s)| \, ds
 \]
 and therefore if we set $K=\int_{u_+}^{u_-} |g(s)| \, ds$, we have
   \begin{equation} \label{Fbound}
     F(v) \leq \left( \frac{2K}{\delta} \right)^{1/2}.
   \end{equation}
Substituting the bound (\ref{Fbound})  in (\ref{e1}), we obtain for
{$\alpha \leq1/2$}, that if a monotone travelling wave exists, 
\[
  \epsilon \geq \delta^{1/2-\alpha} \frac{2^{\alpha-1/2}}{(u_-- u_+)} K^{\alpha+1/2}.
\]
Therefore we have the result of the Lemma with
\[
  C_\alpha = \frac{2^{\alpha-1/2}}{(u_-- u_+)} K^{\alpha+1/2}.
\]  
\end{proof} 

\noindent 
{\bf Remark:} If $\alpha=1/2$, from the last inequality we get
\[
  \epsilon_{min}(1/2, \delta) \geq \frac{1}{(u_- -u_+)} \int_{u_+}^{u_-}
|g(s)| \, ds.
\]
However, a better estimate is obtained by integrating (\ref{e00}) over $[u_+,v]$ that is 
\[
  \epsilon_{min}(1/2, \delta) \geq \max_{v\in[u_+,u_-]}\left(\frac{1}{(v -u_+)} \int_{u_+}^{v}
|g(s)| \, ds,\right)
\]

\noindent 
which in the case of $g(v)=v(v-2)/2$, $[u_+,u_-] = [0,2]$, gives
$\epsilon_{min}(1/2, \delta) \geq 3/8$, see Figure \ref{fig4} (Figure \ref{fig3} illustrates the case $\alpha < 1/2$). This value looks optimal since
it corresponds to the limit of $\epsilon_{min}(1/2, \delta)$ as
$\delta$ goes to zero (see~\ref{three}).

\begin{figure}[H]
  	\centering
  	\begin{subfigure}{0.46\textwidth}
   		\vspace{0.25cm}
    		\includegraphics[width=\textwidth]{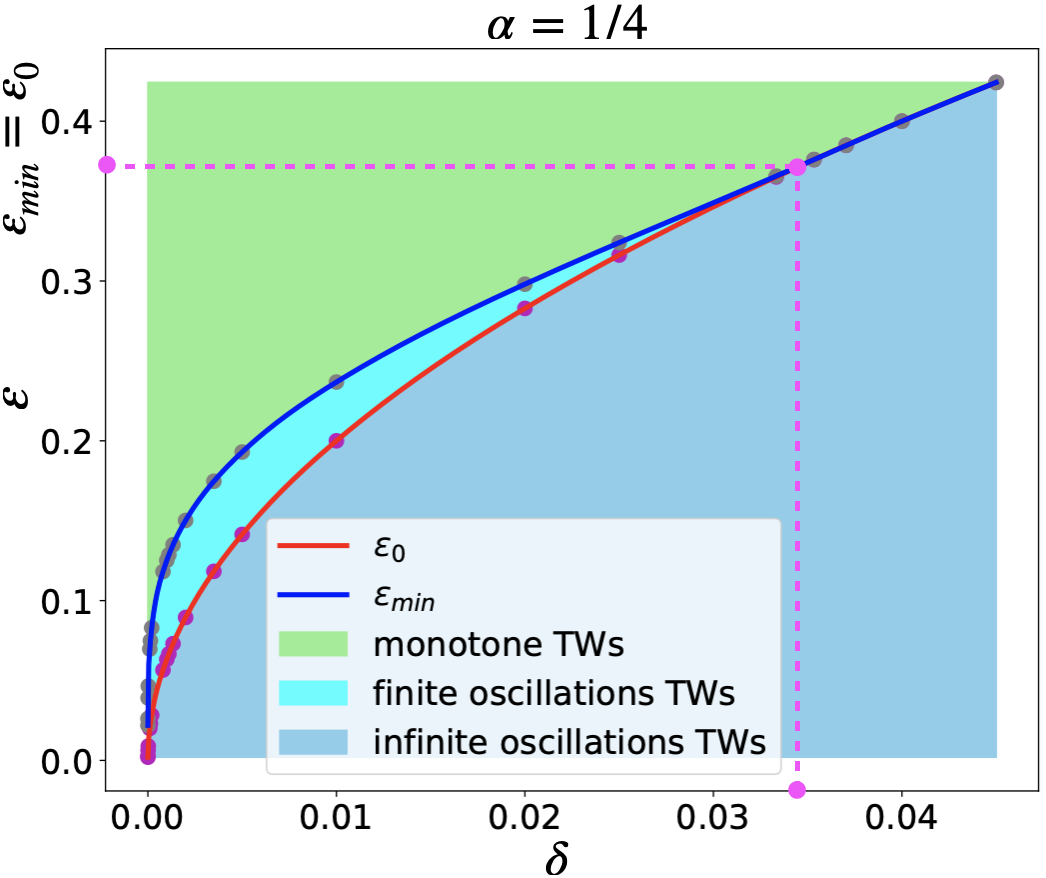}
  	\end{subfigure}
  	\hfill
  	\begin{subfigure}{0.525\textwidth}
   	 	\includegraphics[width=\textwidth]{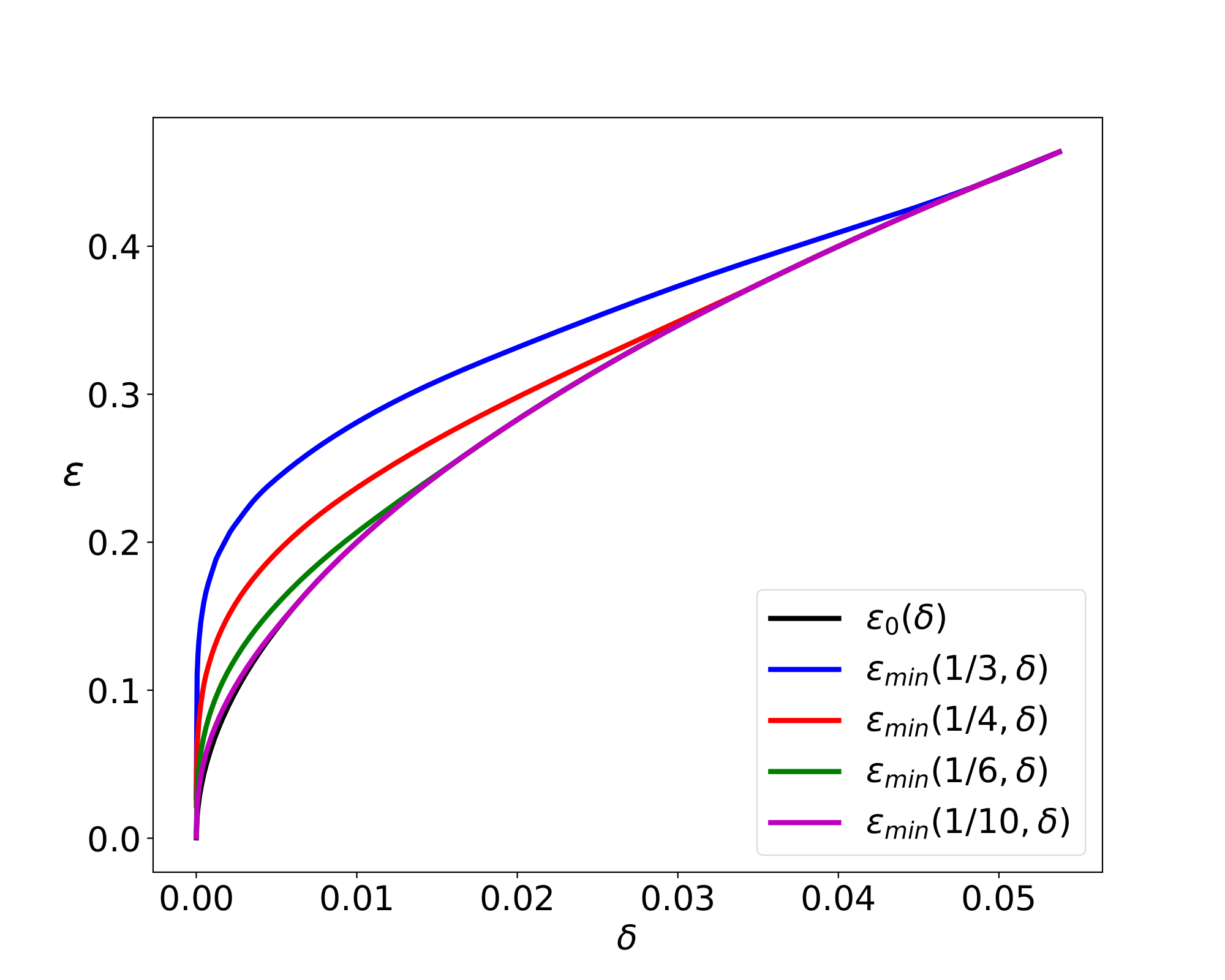}
  	\end{subfigure}
  	\caption{$\epsilon_{min}(\alpha, \delta)$ for $\alpha <1/2$.}\label{fig3}
\end{figure}
 


\begin{figure}[H]
    	\centerline{\includegraphics[width=0.55\textwidth]{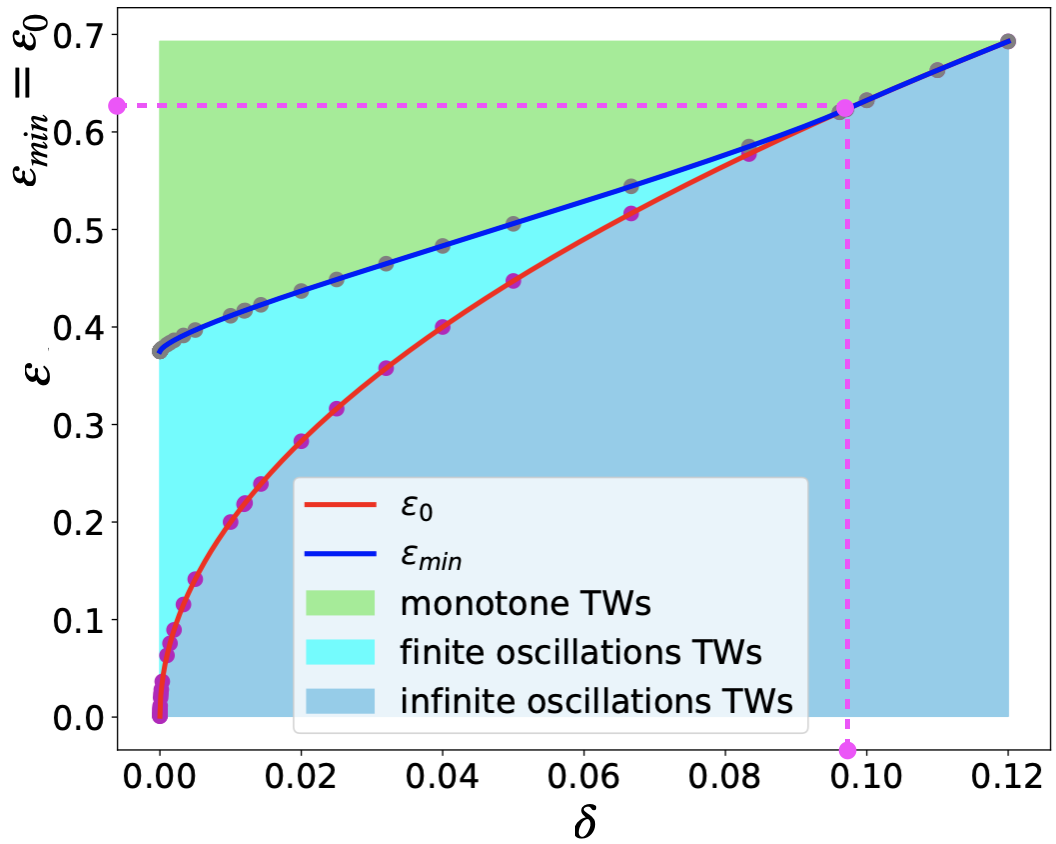}}
     	\caption{$\epsilon_{min}(\frac{1}{2}, \delta)$.} \label{fig4}
\end{figure} 

\noindent
We immediately obtain a non-linear determinacy result:

\begin{lemma}\label{nlinLess}
  For $\alpha \leq 1/2$, (\ref{twsys}) is nonlinearly determined for
  $\delta>0$ small enough. 
\end{lemma}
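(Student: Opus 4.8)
The plan is to deduce nonlinear determinacy directly from the quantitative lower bound of Lemma~\ref{c0}, comparing the scalings of the two thresholds $\epsilon_{min}(\alpha,\delta)$ and $\epsilon_0(\delta)$ as $\delta\to 0_+$. Recall that (\ref{twsys}) is nonlinearly determined precisely when $\epsilon_{min}(\alpha,\delta) > \epsilon_0(\delta)$, so it suffices to exhibit a threshold $\delta^\ast>0$ below which this strict inequality holds. All the analytic work has already been done in Lemma~\ref{c0}; what remains is an elementary comparison of powers of $\delta$.

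First I would write down the two quantities explicitly. From Lemma~\ref{c0} we have $\epsilon_{min}(\alpha,\delta)\geq C_\alpha\,\delta^{1/2-\alpha}$ with $C_\alpha>0$ a constant independent of $\delta$, while (\ref{e0}) gives $\epsilon_0(\delta)=2\sqrt{-g'(u_+)}\,\delta^{1/2}$. The decisive point is that these carry different powers of $\delta$: dividing through by the positive factor $\delta^{1/2}$, the desired inequality $\epsilon_{min}(\alpha,\delta)>\epsilon_0(\delta)$ is implied by
\[
  C_\alpha\,\delta^{-\alpha} > 2\sqrt{-g'(u_+)},
\]
that is, by
\[
  \delta < \delta^\ast := \left( \frac{C_\alpha}{2\sqrt{-g'(u_+)}} \right)^{1/\alpha}.
\]
Since we are in the range $\alpha\in(0,1/2]$, the exponent $-\alpha$ is strictly negative, so $\delta^{-\alpha}\to+\infty$ as $\delta\to 0_+$ and the threshold $\delta^\ast$ is a well-defined positive number. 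Hence for every $0<\delta<\delta^\ast$ one has $\epsilon_{min}(\alpha,\delta)\geq C_\alpha\,\delta^{1/2-\alpha}>\epsilon_0(\delta)$, which is exactly nonlinear determinacy.

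There is no genuine obstacle in the argument itself; the hard part was packaged into the lower bound of Lemma~\ref{c0}, and the present step is just the scaling comparison $\delta^{1/2-\alpha}$ versus $\delta^{1/2}$. The one point I would flag explicitly is that the argument uses $\alpha>0$ in an essential way, since it is the strict positivity of $\alpha$ that makes $\delta^{-\alpha}$ blow up; this is fully consistent with the fact that at $\alpha=0$ the equation reduces to the KPP case, for which $\epsilon_{min}(0,\delta)=\epsilon_0(\delta)$ and the problem is \emph{linearly} determined, so the hypothesis $\alpha\in(0,1/2]$ of this section cannot be relaxed to include $\alpha=0$. For the endpoint $\alpha=1/2$ one may record the even simpler picture that the bound reduces to $\epsilon_{min}(1/2,\delta)\geq C_{1/2}>0$, a fixed positive constant, whereas $\epsilon_0(\delta)=2\sqrt{-g'(u_+)}\,\delta^{1/2}\to 0$, so the two curves are forced to separate for all sufficiently small $\delta$.
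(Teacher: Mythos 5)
Your proposal is correct and is precisely the argument the paper intends: the paper's ``proof'' consists solely of the sentence ``The result follows directly from Lemma~\ref{c0},'' and you have simply made the comparison $C_\alpha\,\delta^{1/2-\alpha}$ versus $\epsilon_0(\delta)=2\sqrt{-g'(u_+)}\,\delta^{1/2}$ explicit, including the correct threshold $\delta^\ast$. Your side remark that the argument genuinely requires $\alpha>0$ (consistent with the section heading $\alpha\in(0,1/2]$ and with linear determinacy in the KPP case $\alpha=0$) is a useful clarification of a point the paper leaves implicit.
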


The result follows directly from Lemma~\ref{c0}.

We also have the following monotonicity result: 

\begin{theorem}\label{theoMondelta}
  Let $0\leq \alpha \leq 1/2$ fixed. Then the function
  $\delta\mapsto\epsilon_{min} (\alpha, \delta)$ is strictly monotone
  increasing.
 \end{theorem}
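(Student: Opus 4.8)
The plan is to fix $\alpha\in[0,1/2]$ and $\delta_1<\delta_2$, to set $\epsilon=\epsilon_{min}(\alpha,\delta_2)$, and to compare \emph{at this common $\epsilon$} the monotone threshold wave $W_2$ for $\delta_2$ with the orbit $W_1$ on the unstable manifold of $(u_-,0)$ for $\delta_1$. Writing both through (\ref{feq}) with $F_i=-W_i>0$ on $(u_+,u_-)$ and setting $Z_i=F_i^2$, each $Z_i$ solves $\frac{\delta_i}{2}Z_i'=\epsilon\,\psi(Z_i)+g$, where $\psi(z):=\sqrt z/(1+z)^\alpha$. I would aim to show that $W_1$ is again a monotone wave and that it enters $(u_+,0)$ through the main direction; by Lemma~\ref{sm} the latter forces $\epsilon>\epsilon_{min}(\alpha,\delta_1)$, which is exactly the strict inequality $\epsilon_{min}(\alpha,\delta_1)<\epsilon_{min}(\alpha,\delta_2)$.

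The structural fact that makes $\alpha\le1/2$ special, and which I would exploit throughout, is that $\psi'(z)$ has the sign of $1+(1-2\alpha)z$, so $\psi$ is strictly increasing on $[0,\infty)$ precisely when $\alpha\le1/2$. Subtracting the two equations and applying the mean value theorem to $\psi$ then yields a scalar linear ODE for $D:=Z_1-Z_2$,
\[
D'=L(v)\,D+R(v),\qquad L(v)=\tfrac{2\epsilon}{\delta_1}\psi'(\xi(v))\ge 0,\qquad R(v)=\Big(\tfrac{\delta_2}{\delta_1}-1\Big)Z_2'(v),
\]
with $\xi$ between $Z_1$ and $Z_2$. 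As in Theorem~\ref{theoMonAlpha}, near $u_-$ one has $Z_i\sim\theta_+(\delta_i)^2(u_--v)^2$, and since Lemma~\ref{rot1} gives $\theta_+(\delta_1)>\theta_+(\delta_2)$, we obtain $D>0$ just left of $u_-$; moreover $L(v)\sim C/(u_--v)$ is non-integrable there. On the sub-interval adjacent to $u_-$ the wave coordinate $F_2$ is decreasing, so $Z_2'\le0$ and hence $R\le0$; a first-crossing argument (if $D$ first vanished there, then $D'=R\le0$ would contradict $D$ decreasing to $0$ from positive values) shows $D>0$ persists down to the point $v_m$ where $F_2$ is maximal.

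The main obstacle is the complementary sub-interval $(u_+,v_m)$, on which $F_2$ is increasing, so $R\ge0$ has the \emph{wrong} sign and the comparison ODE no longer pins down the sign of $D$. Indeed the integral identity obtained from (\ref{feq}), $\epsilon\int_{u_+}^{u_-}\psi(Z)\,dv=\int_{u_+}^{u_-}|g(v)|\,dv$, holds for any monotone wave at this $\epsilon$ with the same right-hand side, so $W_1$ and $W_2$ cannot be globally ordered and $D$ must change sign on $(u_+,v_m)$. This is exactly the feature absent from Theorem~\ref{theoMonAlpha}, where the two orbits share the same $\delta$ and the forcing $R$ vanishes identically; here the differing diffusion coefficients perturb the prefactor $1/\delta$ and not merely the nonlinearity.

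To close the argument on $(u_+,v_m)$ I would abandon the global $Z$-comparison and work locally at $(u_+,0)$. Using that $W_2$ enters along the side manifold at $\epsilon=\epsilon_{min}(\alpha,\delta_2)$, together with the rotation estimates of Lemma~\ref{rot} (the $\delta_1$ main and side directions being respectively flatter and steeper than their $\delta_2$ counterparts), I would show that the orbit $W_1$, once it has crossed $W_2$, is funnelled monotonically into $(u_+,0)$ along the main direction, ruling out both an interior zero of $F_1$ and an overshoot of $u_+$; Lemma~\ref{sm} then delivers the strict inequality. Establishing this local funnelling rigorously, rather than the essentially routine estimates near $u_-$, is where the real work lies, and it is the step I expect to be the main difficulty.
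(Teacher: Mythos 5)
Your setup is sound as far as it goes, but the proof is not complete, and the missing piece is exactly the one you flag yourself: showing that the $\delta_1$ orbit emanating from $(u_-,0)$ at $\epsilon=\epsilon_{min}(\alpha,\delta_2)$ actually terminates at $(u_+,0)$. The comparison ODE for $D=Z_1-Z_2$ controls the sign of $D$ only on the sub-interval where $Z_2'\le 0$, and, as you correctly observe, no global one-sided ordering can hold (the integral identity forces the orbits to cross), so the argument genuinely stops at $v_m$. The ``local funnelling'' you invoke near $(u_+,0)$ is not a routine consequence of Lemmas~\ref{rot} and~\ref{sm}: the linearisation at $(u_+,0)$ only constrains orbits already known to converge to that point, whereas what must be excluded is precisely that $W_1$ crosses the line $v=u_+$ with $w<0$ (note that an interior escape through $w=0$ is impossible, since at such a point $w'=g(v)/\delta<0$, so overshoot past $u_+$ is the only failure mode). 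Nothing in the proposal rules this out, and the information you have carried down from $u_-$, namely $F_1\ge F_2$ up to $v_m$, is the wrong-way inequality for pinching $F_1$ to zero at $u_+$.

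The paper's proof avoids this difficulty entirely by arguing by contradiction. Assuming $\epsilon_{min}(\alpha,\delta_1)\ge\epsilon_{min}(\alpha,\delta_2)$ and taking $\epsilon=\epsilon_{min}(\alpha,\delta_1)$, \emph{both} orbits are then known a priori to be monotone waves vanishing at $u_+$: the $\delta_1$ wave enters $(u_+,0)$ along the side direction (Lemma~\ref{sm}), the $\delta_2$ wave along its side or main direction, and the eigenvalue ordering of Lemma~\ref{rot} at $(u_+,0)$ (not Lemma~\ref{rot1} at the saddle) gives $W_2>W_1$ on an interval $(u_+,\tilde u)$ up to the first crossing $\tilde u$. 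Integrating the exact identity $\delta_i W_iW_i'+\epsilon W_i/(1+W_i^2)^\alpha=g$ over $[u_+,\tilde u]$ and subtracting yields
\[
\tfrac12(\delta_1-\delta_2)\,W_2(\tilde u)^2=\epsilon\int_{u_+}^{\tilde u}\left[\frac{W_2}{(1+W_2^2)^\alpha}-\frac{W_1}{(1+W_1^2)^\alpha}\right]dv,
\]
whose left side is $\le 0$ and whose right side is $>0$ because $x\mapsto x/(1+x^2)^\alpha$ is strictly increasing for $\alpha\le 1/2$ --- the same structural fact you isolate, but deployed in an integral identity anchored at $u_+$ rather than in a Gronwall comparison anchored at $u_-$. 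To salvage your direct approach you would have to supply the overshoot exclusion; recasting the statement as a contradiction so that both comparison orbits are heteroclinic from the outset is the cleaner route.
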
 

\begin{proof}
  Using the results of Lemma~\ref{rot}, we have that if
  $0< \delta_1 < \delta_2 \leq -\frac{\epsilon^2}{4 g'(u_+)}$, we have
  that
\begin{equation}\label{eigs1}
  \xi_-(\delta_1) < \xi_-(\delta_2)< \xi_+(\delta_2)< \xi_+(\delta_1).
\end{equation}  

Now assume by contradiction that
$\epsilon_{min} (\alpha, \delta_1) \geq \epsilon_{min} (\alpha,
\delta_2),$ and take $\epsilon = \epsilon_{min} (\alpha,
\delta_1)$. By (\ref{eigs1}) and the arguments of Proposition
8.5 of Ref.\cite{Hadeler2017}, there is a monotone travelling wave, say
$v\mapsto W_1(v)$, connecting $(u_+,0)$ to $(u_-,0)$ with slope
$X_-(\delta_1)$ at $(u_+,0)$, and a monotone travelling wave
$v\mapsto W_2(v)$ connecting the same equilibria with slope
$X_-(\delta_2)$ ( if
$\epsilon_{min} (\alpha, \delta_1)= \epsilon_{min} (\alpha,
\delta_2)$) or $X_+(\delta_2)$ (if
$\epsilon_{min} (\alpha, \delta_1) > \epsilon_{min} (\alpha,
\delta_2)$). Thus $W_2(v) >W_1(v)$ in some interval $(u_+, \tilde{u})$
where $\tilde{u} \leq u_-$ and $W_1(\tilde{u}) = W_2(\tilde{u})$.

We have
\[
  \delta_1 W_1 \frac{d W_1}{dv}(v) + \epsilon \frac{W_1(v)}{(1 +
    W_1^2)^{\alpha}} = g(v) = \delta_2 W_2 \frac{d W_2}{dv}(v) +
  \epsilon \frac{W_2(v)}{(1 + W_2^2)^{\alpha}}.
\] 
 
Integrating this expression in $v$ over $[u_+, \tilde{u}]$, we have that
\[
  \frac{1}{2}(\delta_1-\delta_2)W_2(\tilde{u})^2 = \int_{u_+}^{\tilde{u}}  \frac{W_2(v)}{(1 + W_2^2)^{\alpha}}dv-
  \int_{u_+}^{\tilde{u}}  \frac{W_1(v)}{(1 + W_1^2)^{\alpha}}dv.
\]  
Since $\alpha \leq 1/2$, the function
$x\mapsto \frac{x}{(1+x^2)^\alpha}$ is strictly monotone increasing
and thus we get a contradiction, since
$ \int_{u_+}^{\tilde{u}} \frac{W_2(v)}{(1 + W_2^2)^{\alpha}}dv-
\int_{u_+}^{\tilde{u}} \frac{W_1(v)}{(1 + W_1^2)^{\alpha}}dv>0,$ when
$\frac{1}{2}(\delta_1 -\delta_2)W_2(\tilde{u})^2 \leq 0$.
\end{proof}

If $\alpha< 1/2$, we can also make a statement about the limit of
$\epsilon_{min}(\alpha,\delta)$ as $\delta \rightarrow 0_+$.

\begin{lemma} \label{del00} For $\alpha< 1/2$,
  $\lim_{\delta \rightarrow 0_+} \epsilon_{min}(\alpha,\delta)=0$.
\end{lemma}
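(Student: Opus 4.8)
The plan is to combine the \emph{lower} bound of Lemma~\ref{c0}, which for $\alpha<1/2$ already gives $\epsilon_{min}(\alpha,\delta)\geq C_\alpha\,\delta^{1/2-\alpha}>0$, with a matching \emph{upper} bound of the same order, and then conclude by a squeeze. Since $\epsilon_{min}(\alpha,\delta)>0$, it suffices to exhibit, for each small $\delta>0$, some $\epsilon=\epsilon(\delta)$ for which a monotone travelling wave exists and with $\epsilon(\delta)\to 0$ as $\delta\to 0_+$. For this I would reuse verbatim the positively-invariant-region construction of Theorem~\ref{thex}: with $h(v)=B(v-u_+)$, $B>0$, a monotone wave exists as soon as $\epsilon\geq K(B)$, where (writing $a=(u_--u_+)^2$ and $c=\lambda-f'(u_+)=-g'(u_+)>0$)
\[
 K(B)=\bigl(1+aB^2\bigr)^\alpha\Bigl(\delta B+\tfrac{c}{B}\Bigr).
\]
Consequently $\epsilon_{min}(\alpha,\delta)\leq\inf_{B>0}K(B)$, and it is enough to show that this infimum tends to $0$.

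The heart of the argument is a single well-chosen, $\delta$-dependent value of $B$. Taking $B=\delta^{-1/2}$ I would estimate, for $0<\delta\leq 1$,
\[
 \bigl(1+aB^2\bigr)^\alpha=\bigl(1+a/\delta\bigr)^\alpha\leq (1+a)^\alpha\,\delta^{-\alpha},
 \qquad
 \delta B+\frac{c}{B}=(1+c)\,\delta^{1/2},
\]
so that
\[
 \epsilon_{min}(\alpha,\delta)\leq K\!\left(\delta^{-1/2}\right)\leq (1+a)^\alpha(1+c)\,\delta^{1/2-\alpha}.
\]
Because $\alpha<1/2$ the exponent $1/2-\alpha$ is strictly positive, so the right-hand side tends to $0$ as $\delta\to 0_+$. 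Together with the lower bound $\epsilon_{min}(\alpha,\delta)\geq C_\alpha \delta^{1/2-\alpha}$ from Lemma~\ref{c0}, the squeeze gives $\lim_{\delta\to0_+}\epsilon_{min}(\alpha,\delta)=0$ (and in fact $\epsilon_{min}(\alpha,\delta)\asymp\delta^{1/2-\alpha}$).

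The only real decision in the proof is the scaling of $B$: one must balance the growth factor $(1+aB^2)^\alpha\sim a^\alpha B^{2\alpha}$ against the bracket $\delta B+c/B$. The bracket is minimized near $B\sim\delta^{-1/2}$, where both of its terms are $O(\delta^{1/2})$; the cost of the prefactor is then $O(\delta^{-\alpha})$, and the product is $O(\delta^{1/2-\alpha})$. Thus the whole argument hinges on the strict inequality $1/2-\alpha>0$, which is exactly why the statement fails at $\alpha=1/2$ (there $K(\delta^{-1/2})$ stays bounded away from $0$, consistent with the positive lower limit recorded after Lemma~\ref{c0}). I do not anticipate a serious obstacle beyond pinning down this scaling and checking that the elementary inequalities above hold uniformly for small $\delta$.
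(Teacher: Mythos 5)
Your argument is correct, but it is not the route the paper takes. The paper disposes of this lemma in one line: it invokes Theorem~\ref{smdelta} (the geometric singular perturbation / Fenichel argument, which for $\alpha<1/2$ produces a monotone wave for \emph{every} fixed $\epsilon>0$ once $\delta<\delta_0(\alpha,\epsilon)$) together with the monotonicity of $\delta\mapsto\epsilon_{min}(\alpha,\delta)$ from Theorem~\ref{theoMondelta}, so that $\epsilon_{min}(\alpha,\delta)\leq\epsilon$ for all small $\delta$ and the limit is $0$. You instead recycle the invariant-region construction of Theorem~\ref{thex} with the $\delta$-dependent slope $B=\delta^{-1/2}$, which legitimately yields $\epsilon_{min}(\alpha,\delta)\leq K(\delta^{-1/2})\leq(1+a)^\alpha(1+c)\,\delta^{1/2-\alpha}$; your elementary estimates check out, and the conclusion follows from $1/2-\alpha>0$ alone (the lower bound of Lemma~\ref{c0} is not actually needed for the limit, only for the two-sided asymptotics). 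What your approach buys is substantial: it avoids the singular perturbation machinery entirely and, combined with Lemma~\ref{c0}, upgrades the qualitative statement to the sharp rate $\epsilon_{min}(\alpha,\delta)\asymp\delta^{1/2-\alpha}$, which the paper does not record; it also makes transparent why the result fails at $\alpha=1/2$. What the paper's route buys is uniformity with the $\alpha\geq 1/2$ analysis, where the same Theorem~\ref{smdelta} is reused to identify the (nonzero) limit $\epsilon_{min}(\alpha)$. One small caveat: the invariant-region criterion in Theorem~\ref{thex} gives existence for $\epsilon\geq K(B)$ (or, to be safe, $\epsilon>K(B)$), so strictly you should conclude $\epsilon_{min}(\alpha,\delta)\leq K(\delta^{-1/2})+\eta$ for every $\eta>0$ and then let $\eta\to 0$; this does not affect the result.
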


\begin{proof}
  This follows from Theorem~\ref{smdelta} in the case of $\alpha<1/2$
  and Theorem~\ref{theoMondelta}.
\end{proof}

Thus in the case of $\alpha< 1/2$ the description of
$\epsilon_{min}(\alpha, \delta)$ is complete.

\section{Results for $\alpha > 1/2$} \label{more}

For $\alpha \geq  1/2$, we define $ \epsilon_{min}(\alpha)$ to be the
minimum value of $\epsilon$ for which (\ref{tw2}) with $\delta=0$ has
a smooth monotone travelling wave.

It turns out that this quantity can be computed exactly.

\begin{lemma}\label{epsmin}
For all $\alpha > 1/2$ we have  
 \begin{equation}\label{emin2}
   \epsilon_{min}(\alpha) = H(\alpha) := 2^\alpha S
   \left(\frac{\alpha}{2\alpha-1} \right)^\alpha \sqrt{2\alpha-1},
 \end{equation}  
where
\begin{equation}\label{S}
  S = \max_{v \in [u_+,u_-]} [-g(v)].
\end{equation}
\end{lemma}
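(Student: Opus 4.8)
The plan is to exploit the fact that setting $\delta=0$ collapses the travelling--wave problem into a pointwise algebraic relation. With the substitution $F(v)=-v'$ of (\ref{feq}), a monotone wave at $\delta=0$ is exactly a function $F\ge 0$ on $[u_+,u_-]$ with $F(u_\pm)=0$, $F>0$ on $(u_+,u_-)$, solving
\[
  \frac{\epsilon\,F(v)}{(1+F(v)^2)^\alpha}=-g(v).
\]
Writing $\phi(s)=s/(1+s^2)^\alpha$ for $s\ge 0$, this reads $\phi(F(v))=-g(v)/\epsilon$; recall that $-g$ is concave, vanishes at $u_\pm$ and is positive on $(u_+,u_-)$, with maximum $S$ attained at an interior point $\bar u$. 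So for each $v$ we must invert $\phi$ at height $-g(v)/\epsilon\ge 0$.

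First I would study $\phi$ for $\alpha>1/2$. A short computation gives $\phi'(s)=(1+s^2)^{-\alpha-1}\bigl[1+(1-2\alpha)s^2\bigr]$, so $\phi$ increases on $[0,s^*]$ and decreases on $[s^*,\infty)$ with $s^*=(2\alpha-1)^{-1/2}$, and its range is $[0,\phi_{\max}]$ where
\[
  \phi_{\max}=\phi(s^*)=\frac{(2\alpha-1)^{\alpha-1/2}}{(2\alpha)^\alpha}.
\]
Hence $\phi(F)=-g(v)/\epsilon$ is solvable in $F\ge 0$ for every $v\in[u_+,u_-]$ if and only if $S/\epsilon\le\phi_{\max}$, i.e. iff $\epsilon\ge S/\phi_{\max}$. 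Simplifying $S/\phi_{\max}$ reproduces exactly $H(\alpha)=2^\alpha S\,(\alpha/(2\alpha-1))^\alpha\sqrt{2\alpha-1}$, which shows $\epsilon\ge H(\alpha)$ is necessary and identifies $H(\alpha)$ as the candidate threshold.

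For sufficiency, given $\epsilon>H(\alpha)$ I would select the increasing branch $\phi_{\mathrm{inc}}^{-1}\colon[0,\phi_{\max})\to[0,s^*)$ and set $F(v)=\phi_{\mathrm{inc}}^{-1}(-g(v)/\epsilon)$. This is the correct branch because $F$ must vanish at $u_\pm$: as $-g(v)/\epsilon\to 0$ the increasing branch gives $F\to 0$ whereas the decreasing branch gives $F\to\infty$. Since $\epsilon>H(\alpha)$ forces $-g(v)/\epsilon<\phi_{\max}$ everywhere, $F$ stays strictly below $s^*$, where $\phi'>0$; by the implicit function theorem $F$ inherits the smoothness of $g$, so $v'=-F(v)$ is a smooth scalar ODE. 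Then $F>0$ on $(u_+,u_-)$ makes $v$ strictly decreasing, while $F(v)\sim(|g'(u_\pm)|/\epsilon)\,|v-u_\pm|$ near the endpoints makes $\int dv/F$ diverge there, so the orbit is a genuine heteroclinic reaching $u_\pm$ as $\zeta\to\pm\infty$. This yields a smooth monotone wave for every $\epsilon>H(\alpha)$, giving $\epsilon_{min}(\alpha)=H(\alpha)$.

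The only delicate point --- and the expected main obstacle --- is the threshold value $\epsilon=H(\alpha)$ together with the branch and regularity bookkeeping. There $-g(\bar u)/\epsilon=\phi_{\max}$, so $F(\bar u)=s^*$ and $\phi'(s^*)=0$; a local expansion using $-g(v)\approx S-c(v-\bar u)^2$ and $\phi(s)\approx\phi_{\max}-d(s-s^*)^2$ gives $F(v)-s^*\sim\pm|v-\bar u|$, so the profile still connects $u_-$ to $u_+$ monotonically but now has a corner at $\bar u$, consistent with the curves $W_\pm$ discussed in Theorem~\ref{smdelta}. I would therefore frame the conclusion as: for $\epsilon<H(\alpha)$ the relation has no solution at $\bar u$ and no wave exists, while for $\epsilon>H(\alpha)$ a smooth monotone wave exists, so $H(\alpha)$ is the sharp threshold $\epsilon_{min}(\alpha)$, attained in the limit by the cornered (non-smooth) profile. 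Thus the real work beyond the explicit evaluation of $\phi_{\max}$ is verifying that solvability of the pointwise relation genuinely corresponds to a bona fide smooth monotone heteroclinic, namely the endpoint regularity and the divergence of the connecting time.
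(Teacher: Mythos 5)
Your proof is correct and follows essentially the same route as the paper: the paper imposes tangency of the line $h_1(z)=-\epsilon z$ with $h_2(z)=S(1+z^2)^\alpha$, which is algebraically the same computation as your maximisation of $\phi(s)=s/(1+s^2)^\alpha$, and both locate the critical slope $(2\alpha-1)^{-1/2}$ and yield $H(\alpha)=S/\phi_{\max}$. Your additional bookkeeping on sufficiency (choice of the increasing branch, endpoint asymptotics $F(v)\sim(|g'(u_\pm)|/\epsilon)|v-u_\pm|$ giving a genuine heteroclinic, and the corner at $\bar u$ when $\epsilon=H(\alpha)$) is a welcome refinement of details the paper leaves implicit, not a different argument.
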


\begin{proof}
  If we set $\delta=0$ in (\ref{tw2}), we have that the existence of
  monotone travelling waves is equivalent to the solvability of
\begin{equation} \label{tw3}
  -\epsilon v' = -g(v) \left(1+ (v')^2 \right)^\alpha, \; \; v \in
  [u_+,u_-],
\end{equation}
with $v'<0$ in $[u_+,u_-]$. But this condition is equivalent to having
intersections between the graphs of
\[
  h_1(z)=-\epsilon z \hbox{ and }
  h_2(z)= S(1+z^2)^\alpha,
\]
$z<0$. Therefore $\epsilon_{min}(\alpha)$ is determined by the
tangency of these two graphs, i.e. from the system of equations
$h_1(z)=h_2(z)$ and $h_1'(z)=h_2'(z)$, that is, 
\begin{equation}\label{emin}
  \begin{cases}
    -\epsilon_{min}(\alpha) z = S(1+z^2)^\alpha, & \\
    -\epsilon_{min}(\alpha) = 2\alpha zS(1+z^2)^{\alpha-1}. &
   \end{cases} 
 \end{equation}
 From the second of equations (\ref{emin}) we have
 \[
   -\epsilon_{min}(\alpha) z= \frac{2\alpha
     z^2S(1+z^2)^{\alpha}}{1+z^2}.
 \]
 Hence using the first equation in (\ref{emin}), we obtain
 \[
  S(1+z^2)^\alpha= \frac{2\alpha
    z^2S(1+z^2)^{\alpha}}{1+z^2}.
 \] 
 Therefore $2\alpha z^2=1+z^2$ and hence at the point of tangency,
 requiring $z<0$, we have
 \[
   z = - \frac{1}{\sqrt{2\alpha-1}}.
 \]
 Substituting this information in (\ref{emin}), we have the result of
 the Lemma. 
\end{proof}
 
It is not hard to show that
$\lim_{\alpha \rightarrow 1/2_+} H(\alpha)=S$, a result to which we
will return, and that the asymptotics of $H(\alpha)$ as
$\alpha \rightarrow \infty$ is given by
\[
  H(\alpha) \sim \sqrt{2e} \sqrt{\alpha} + O(1/\sqrt{\alpha}).
\]

\noindent \noindent {\bf Remark:} It is rather surprising that Hadeler--Rothe type
approximation leads exactly to the formula (\ref{emin2}). Please
see~\ref{HR}.

Next we want to characterise graph of the mapping $\delta \mapsto
\epsilon_{min}(\alpha,\delta)$ if $\alpha>1/2$.

First of all note that for $\alpha>1/2$,
$\tilde{\epsilon}(\alpha)$ defined in (\ref{epstilde}) must satisfy
$\tilde{\epsilon}(\alpha)~<~\epsilon_{min}(\alpha)$.

We have the following Corollary of Theorem~\ref{smdelta}(ii):

\begin{corollary}\label{epsineq}
  For $\alpha > 1/2$,
  $\tilde{\epsilon}(\alpha)={\inf}_{\delta \in \R_+} \epsilon_{min}(\alpha,\delta) <
 \epsilon_{min}(\alpha)$.
\end{corollary}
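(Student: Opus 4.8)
The plan is to read off the strict inequality directly from the sharper part~(ii) of Theorem~\ref{smdelta} in its singular incarnation~(\ref{case2}), combined with the characterisation of the approach direction in Lemma~\ref{sm}.

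First I would apply Theorem~\ref{smdelta}(ii) in the case~(\ref{case2}), i.e.\ with $\alpha>1/2$ and $\epsilon=\epsilon_{min}(\alpha)$. This furnishes a threshold $\delta_0(\alpha,\epsilon_{min}(\alpha))>0$ such that for every $\delta$ with $0<\delta<\delta_0$ the system~(\ref{twsys}) admits a monotone travelling wave that enters the rest point $(u_+,0)$ through the main direction.

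Next I would invoke Lemma~\ref{sm}. For fixed $\alpha$ and $\delta$ that lemma says the monotone wave approaches $(u_+,0)$ via the side direction precisely when $\epsilon=\epsilon_{min}(\alpha,\delta)$, and via the main direction precisely when $\epsilon>\epsilon_{min}(\alpha,\delta)$. Since at the value $\epsilon=\epsilon_{min}(\alpha)$ the wave produced above enters via the main direction, this forces the strict inequality $\epsilon_{min}(\alpha,\delta)<\epsilon_{min}(\alpha)$ for each $\delta\in(0,\delta_0)$.

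Finally I would pass to the infimum. Fixing any single $\delta\in(0,\delta_0)$ we have
\[
\tilde{\epsilon}(\alpha)=\inf_{\delta'\in\R_+}\epsilon_{min}(\alpha,\delta')\le\epsilon_{min}(\alpha,\delta)<\epsilon_{min}(\alpha),
\]
which is the desired conclusion; the strictness survives the passage to the infimum because the bound $\epsilon_{min}(\alpha,\delta)<\epsilon_{min}(\alpha)$ already holds for a fixed $\delta$. The only point requiring care—and the closest thing to an obstacle—is the logical direction of Lemma~\ref{sm}: one must use its \emph{contrapositive}, namely that entry via the main direction cannot occur at $\epsilon=\epsilon_{min}(\alpha,\delta)$ (where entry is via the side direction), so that the existence of a main-direction monotone wave at $\epsilon=\epsilon_{min}(\alpha)$ certifies $\epsilon_{min}(\alpha)>\epsilon_{min}(\alpha,\delta)$ rather than merely $\ge$.
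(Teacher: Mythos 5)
Your proposal is correct and follows essentially the same route as the paper: the paper likewise derives the strict inequality by combining Theorem~\ref{smdelta}(ii) in the singular case~(\ref{case2}) (main-direction entry at $\epsilon=\epsilon_{min}(\alpha)$ for small $\delta$) with Lemma~\ref{sm} (side-direction entry at $\epsilon=\epsilon_{min}(\alpha,\delta)$), then passes to the infimum. Your explicit attention to the contrapositive use of Lemma~\ref{sm} is a useful clarification of a step the paper leaves implicit.
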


This follows since by Lemma~\ref{sm} at
$(\delta, \epsilon_{min}(\alpha,\delta))$ the monotone travelling wave
approaches $(u_+,0)$ via the side manifold.  

\noindent 
{\bf Remark:} This means that for $\alpha>1/2$,
$\epsilon_{min}(\alpha,\delta)$ has at least one minimum
$\tilde{\epsilon}(\alpha)<\epsilon_{min}(\alpha)$, see Figure \ref{fig5}. To prove that
uniqueness of the minimum seems like an interesting problem.

\begin{figure}[H]
  	\centering
  	\begin{subfigure}{0.46\textwidth}
   		\vspace{0.25cm}
    		\includegraphics[width=\textwidth]{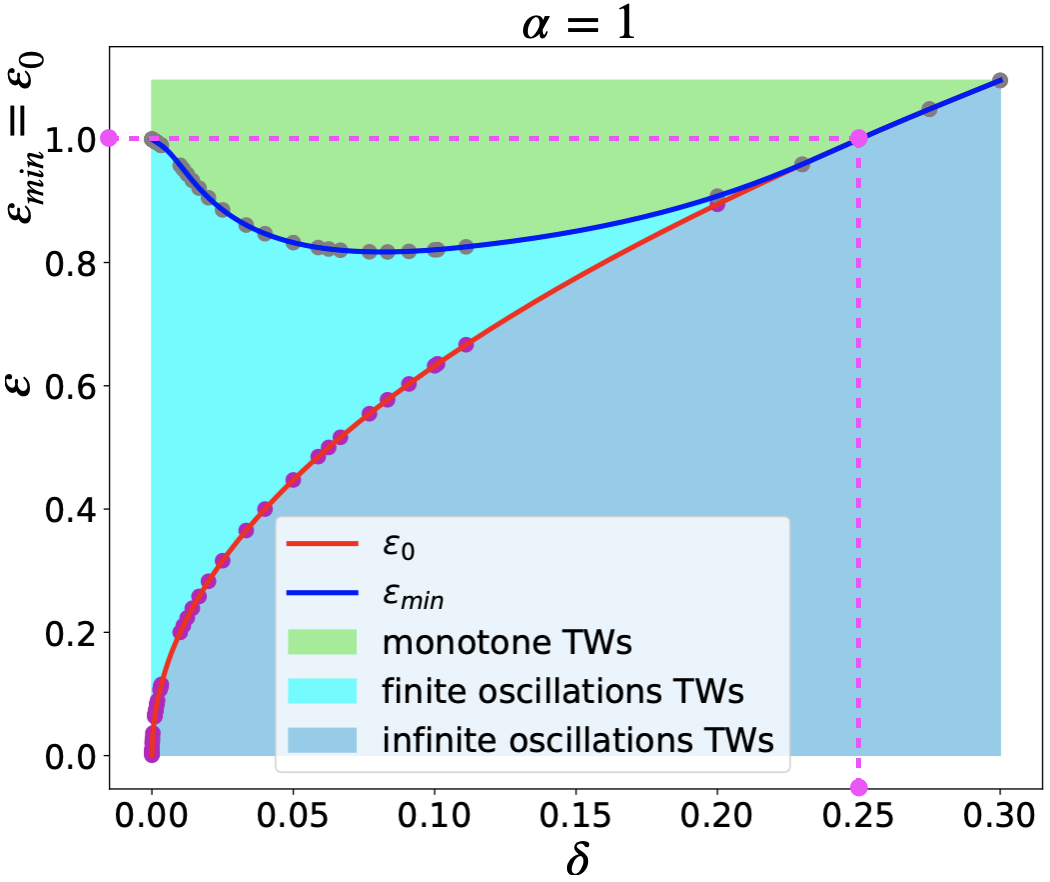}
  	\end{subfigure}
  	\hfill
  	\begin{subfigure}{0.525\textwidth}
   	 	\includegraphics[width=\textwidth]{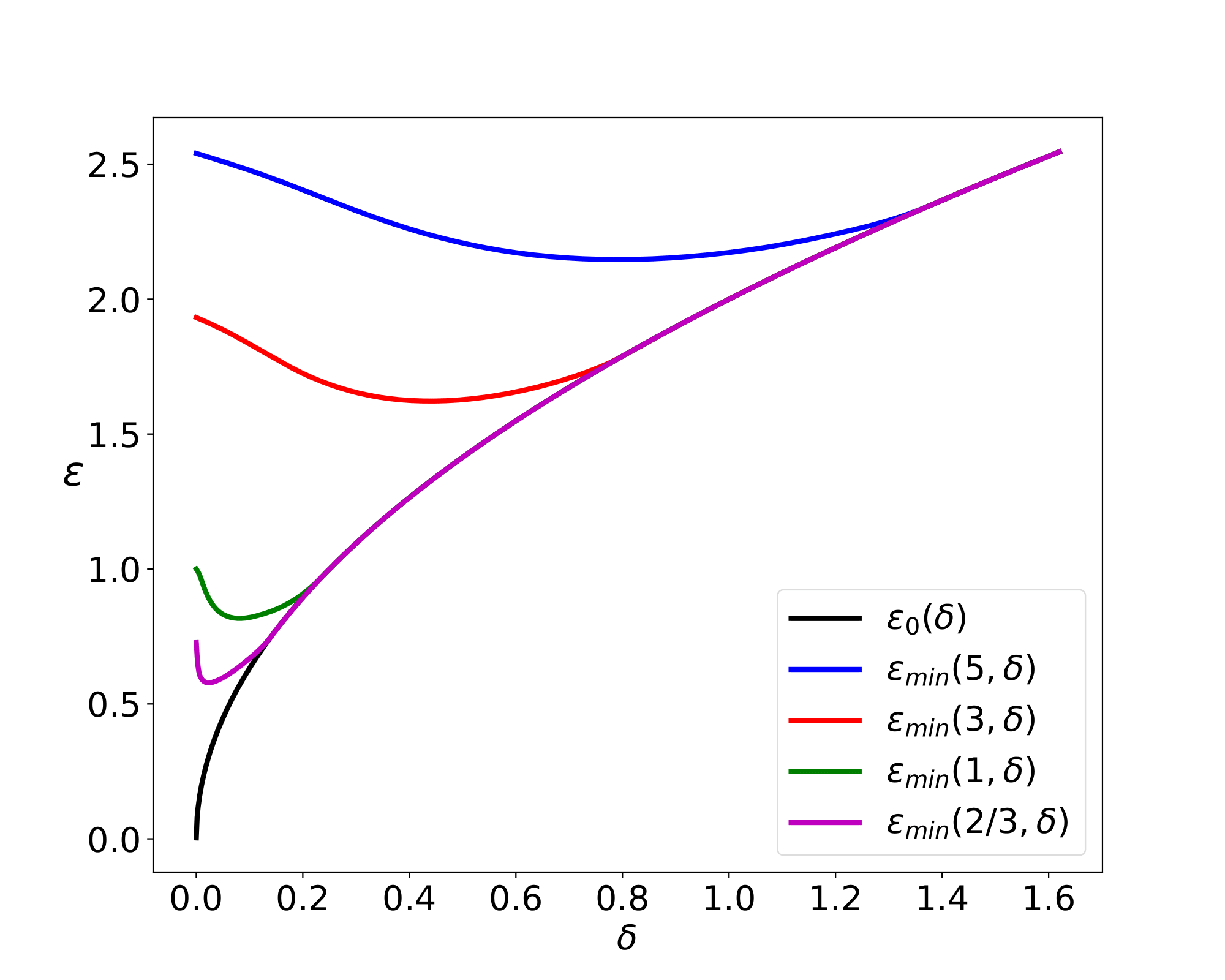}
  	\end{subfigure}
  	\caption{ $\epsilon_{min}(\alpha, \delta)$ for $\alpha >1/2$.} \label{fig5}
\end{figure}


\begin{theorem}\label{thint}
  Let $\alpha>1/2$ and
  $ \epsilon
    <\epsilon_{min}(\alpha)$ fixed, there is a value $ \delta_{max}(\alpha, \epsilon)>0$ such
    that monotone travelling waves of (\ref{tw2}) do not exist for
    $0 <\delta < \delta_{max}(\alpha, \epsilon)$.
\end{theorem}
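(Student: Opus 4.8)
The plan is to argue by contradiction, exploiting that for $\epsilon<\epsilon_{min}(\alpha)$ the critical manifold of the fast--slow system (\ref{sp}) is disconnected. Writing $\phi_{max}=\max_{s>0}\frac{\epsilon s}{(1+s^2)^\alpha}=\epsilon\,\psi_{max}$, where $\psi_{max}$ is the value of $\frac{s}{(1+s^2)^\alpha}$ at $s=1/\sqrt{2\alpha-1}$, the tangency computation in the proof of Lemma~\ref{epsmin} gives $\epsilon_{min}(\alpha)\,\psi_{max}=S$, so that $\phi_{max}=\frac{\epsilon}{\epsilon_{min}(\alpha)}S<S$ and $\eta:=S-\phi_{max}>0$. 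Hence on the $v$--interval $J=\{v\in[u_+,u_-]:|g(v)|>\phi_{max}\}\ni\bar u$ the equation $d_\epsilon(v,w)=0$ has no solution, i.e. there is a genuine gap in the curves $w=W_\pm(v)$. I would suppose, for contradiction, that for arbitrarily small $\delta>0$ a monotone travelling wave exists; it is the branch of the unstable manifold of $(u_-,0)$ lying in $w<0$, with $w\to0$ as $v\to u_\pm$. I will show that traversing $J$ forces $|w|\gtrsim\delta^{-1/2}$, after which the orbit can no longer return to $w=0$ at $u_+$.

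For the gap estimate, fix a closed subinterval $[a_0,b_0]\subset J$ on which $|g(v)|\ge\phi_{max}+\eta/2$, and set $\ell_0=b_0-a_0>0$ (all independent of $\delta$). Along the orbit $\frac{d}{dv}(w^2)=2w'=\frac{2}{\delta}\,d_\epsilon(v,w)$, and since $\bigl|\tfrac{\epsilon w}{(1+w^2)^\alpha}\bigr|\le\phi_{max}$ we get $d_\epsilon(v,w)\le -|g(v)|+\phi_{max}\le-\eta/2$ on $[a_0,b_0]$. Integrating in $v$, $w^2(a_0)\ge w^2(b_0)+\eta\ell_0/\delta\ge\eta\ell_0/\delta$; since $d_\epsilon<0$ throughout $J$, the same monotonicity extends down to the left edge $a$ of $J$ (where $|g(a)|=\phi_{max}$), giving $|w(a)|\ge\sqrt{\eta\ell_0/\delta}$.

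On $(u_+,a)$ the set $\{d_\epsilon=0\}$ has two branches: the attracting upper branch $W_+(v)\in(w_f,0)$ and the repelling lower branch $W_-(v)<w_f$, where $w_f=-1/\sqrt{2\alpha-1}$ and $W_-(v)\to-\infty$ as $v\to u_+$. Below the lower branch, $w<W_-(v)$, one checks $d_\epsilon(v,w)<0$, hence $\frac{d}{dv}(w^2)=\frac{2}{\delta}d_\epsilon<0$, so $w^2$ increases as $v$ decreases. Choosing $\delta_{max}(\alpha,\epsilon)$ so small that $\sqrt{\eta\ell_0/\delta}>|w_f|$, the orbit lies strictly below $W_-$ at $v=a$; as long as it stays below $W_-$ we obtain $|w(v)|\ge\sqrt{\eta\ell_0/\delta}$ for all $v\in(u_+,a)$, whence $\liminf_{v\to u_+^+}|w(v)|>0$, contradicting $w\to0$.

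The crux is therefore to show that the orbit, once far below the repelling branch, cannot cross back above it; this is a degenerate repulsion statement at the fold $v=a$ and near $u_+$, where $W_-$ blows up, and is the main obstacle. The governing mechanism is a separation of scales: along the orbit $w'=O(1/\delta)$, while the slow drift of $W_-$ contributes only $W_-'(v)\,w=O(\delta^{-1/2})$, so for $|w|\sim\delta^{-1/2}$ the fast field pushes the orbit away from $W_-$. If a crossing at some $v_*\in(u_+,a)$ did occur, the bound $w^2(v_*)=W_-(v_*)^2\ge\eta\ell_0/\delta$ would force $v_*$ within some $\rho(\delta)\to0$ of $u_+$ (since $|W_-|\to\infty$ only there); but then $v'=w\approx-\delta^{-1/2}$ drives $v$ below $u_+$ in ``time'' $O(\rho(\delta)\sqrt{\delta})$ with $w$ still $\approx-\delta^{-1/2}$, so the orbit crosses $v=u_+$ transversally with $w\neq0$ rather than converging to $(u_+,0)$. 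Making this last step fully rigorous is the delicate point; alternatively one may desingularise the fold by the blow-up method of Krupa and Szmolyan~\cite{KS2001}, which the direct construction above is designed to avoid.
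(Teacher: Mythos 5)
Your first half coincides with the paper's: for $\epsilon<\epsilon_{min}(\alpha)$ the nullcline $d_\epsilon(v,w)=0$ is disconnected, and integrating $\frac{d}{dv}(w^2)=\frac{2}{\delta}d_\epsilon$ across the gap forces the unstable branch of $(u_-,0)$ down to $|w|\geq C\delta^{-1/2}$ by the time it re-meets the nullcline component emanating from $u_+$ — this is exactly the paper's estimate (\ref{West}). But you stop one step short of a proof, and you say so yourself: excluding that the orbit crosses back above the deep branch $W_-$ near $u_+$ and still lands at $(u_+,0)$ is, in your words, ``the main obstacle'', and the scale-separation heuristic you offer (the orbit ``crosses $v=u_+$ transversally with $w\neq0$'') does not establish it. After such a crossing one has $d_\epsilon>0$, so $|w|$ decreases as $v$ decreases towards $u_+$; whether it reaches $0$ exactly at $u_+$ or overshoots is precisely the question at issue, and a time-parametrisation estimate cannot decide it.

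The paper closes this gap not by a repulsion or blow-up argument at the fold but by an elementary integral estimate in the \emph{opposite} direction, which is the ingredient you are missing. Along any monotone orbit converging to $(u_+,0)$, equation (\ref{wpos}) together with $g\leq 0$ gives the differential inequality $\delta(1+w^2)^\alpha\frac{d|w|}{dv}\leq\epsilon$, whence $|w(v)|^{2\alpha+1}\leq\frac{C}{\delta}\,(v-u_+)$: an orbit emanating from $(u_+,0)$ cannot be deep while still close to $u_+$ (this is (\ref{Wpest})). Your own lower bound, combined with $|W_-(v)|\sim\bigl(\epsilon/|g(v)|\bigr)^{1/(2\alpha-1)}$, localises the meeting point at $V_\delta-u_+\lesssim\delta^{(2\alpha-1)/2}$ (the paper's (\ref{Vg2})); feeding this into the upper bound yields $|w(V_\delta)|^{2\alpha+1}\lesssim\delta^{\alpha-3/2}$, whereas the gap estimate forces $|w(V_\delta)|^{2\alpha+1}\gtrsim\delta^{-\alpha-1/2}$. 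Matching the two pieces of the putative heteroclinic at $V_\delta$ then gives $1\leq C\delta^{2\alpha-1}$, which fails for small $\delta$ precisely because $\alpha>1/2$ (the paper's (\ref{rat})). Supplying this one estimate would turn your argument into essentially the paper's proof; without it, the conclusion is not reached.
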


\begin{proof} The theorem is proved if we prove that there are no
  monotone heteroclinic connections between $(u_-,0)$ and $(u_+,0)$
  for (\ref{twsys}) for $\delta$ small enough.

  In (\ref{sp}), consider the singular set corresponding to $\delta =0$. If
  $\epsilon< \epsilon_{min}(\alpha)$, this set has two components, in
  each of which $v$ can be written as a function of $w$, $v= H_\pm(w)$,
  functions that are  located on each side of the line $V= \overline{u}$,
  where $g(\overline{u}) = \min_{u\in [u_+, u_-]} g(u)$, and such that
\[
  \lim_{w \rightarrow -\infty} H_\pm(w) = u_\pm,
\]
see Figure \ref{fig6}.

 \begin{figure}[H]
    \centerline{\includegraphics[width=0.6\textwidth]{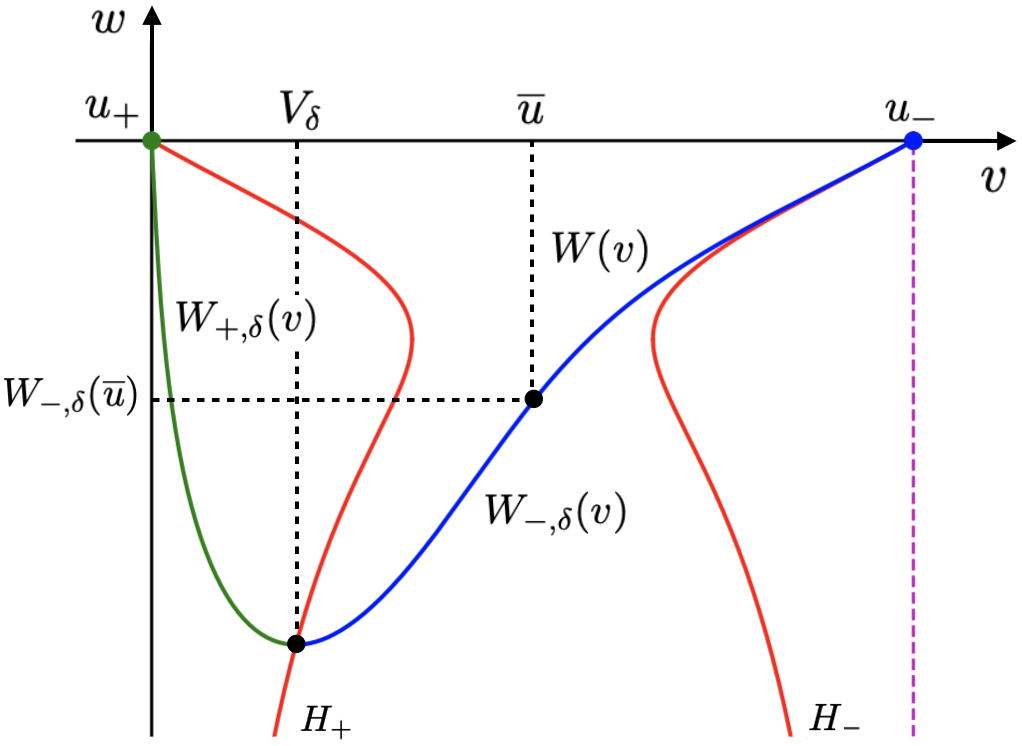}}
     \caption{$\alpha>1/2$ and $ \epsilon<\epsilon_{min}(\alpha)$.} \label{fig6}
 \end{figure} 

Now consider the portion of the unstable manifold of $(u_-, 0)$
in the domain $D$,
\[
  D = \{ (v, w) \in \R^2 \, | \, u_+ \leq v \leq u_-, w \leq 0 \},
\]
and call it $\Delta_\delta$. We see that the solution through an
initial condition on $\Delta_\delta$ travels to the left in the $v$
direction and down in the $w$ direction till we reach an intersection
with $H_+(w)$ at a point $V_\delta$. Hence we can identify
$\Delta_\delta$ with functions of $v$, which we call $W_{-,\delta}(v)$,
defined on  the interval $[V_\delta, u_-]$. Note 
that using the argument leading to (\ref{feq}), 
$W_{-,\delta}$  satisfies the
equation
\begin{equation}\label{wneg}
  \delta W_{-,\delta}\frac{dW_{-,\delta}}{dv} = g(v) - \frac{\epsilon
    W_{-,\delta}}{(1+W_{-,\delta}^2)^\alpha}.
\end{equation}

By considering the flow generated by (\ref{sp}), we have
that the functions $W_{-,\delta}(v)$ are ordered: if $\delta_1 < \delta_2$,
$W_{-,\delta_2}$ lies above $W_{-,\delta_1}$ in $S$. 

By the above argument, $W_{-,\delta}(v)$ must intersect the line
$v= \overline{u}$ at some point
$(\overline{u}, W_{-,\delta}(\overline{u}))$. Furthermore,
$W_{-,\delta_1}(\overline{u}) < W_{-,\delta_2}(\overline{u})$ if
$\delta_1< \delta_2$. We also have that
\[
  \lim_{\delta \rightarrow 0} W_{-,\delta}(\overline{u}) = -\infty.
\]

To show that, first of all note that for all
$v \in [\overline{u}, u_-]$ the right-hand side of (\ref{wneg}) is
negative. Integrating (\ref{wneg}) in $v$ between $\overline{u}$ and
$u_-$ and noting that $W_{-,\delta}(u_-)=0$, we have
\begin{equation}\label{weq2}
  \frac{\delta}{2}W_{-,\delta}(\overline{u})^2 =
  \int_{\overline{u}}^{u_-} \left[ \epsilon \frac{
    W_{-,\delta}(v)}{(1+W_{-,\delta}(v)^2)^\alpha} -g(v)\right] \, dv,
\end{equation}
with the integrand being positive. Now assume that
\[
  \lim_{\delta \rightarrow 0} W_{-,\delta}(\overline{u}) = W_0 > -\infty.
\]
Then we can take the limit of both sides of (\ref{weq2}) as
$\delta \rightarrow 0$, with the result that there must exist points
$v$ in any neighbourhood of $\overline{u}$ for which there exist pairs
$(v,w)$ in the singular set (\ref{sp}) with $\delta=0$, which is
impossible as $\epsilon< \epsilon_{min}(\alpha)$.

Since $W_{-,\delta}(\overline{u}) \rightarrow -\infty$ as
$\delta \rightarrow 0$, it follows that the $w$-coordinates of points
of intersection of $W_{-,\delta}$ with $H_+(w)$ do as well as
$\delta \rightarrow 0$; we have already denoted the $v$-coordinates of
the intersection by $V_\delta$; clearly
\[
  \lim_{\delta \rightarrow 0} V_\delta= u_+.
\]

From (\ref{wneg}), integrating in $v$ from $V_\delta$ to
$\overline{u}$, we have
\begin{equation}\label{wquad}
  \frac{\delta}{2} \left(W_{-,\delta}^2(\overline{u}) - W_{-,\delta}^2
    (V_\delta) \right) = \int_{V_\delta}^{\overline{u}} g(v) \, dv - \epsilon 
  \int_{V_\delta}^{\overline{u}}
  \frac{W_{-,\delta}(v)}{(1+W_{-,\delta}^2(v))^\alpha}\, dv.  
\end{equation}
The first integral in the right-hand side is bounded by a negative constant,
while the second satisfies

\begin{align*}  
  & \left|\int_{V_\delta}^{\overline{u}}
  \frac{W_{-,\delta}(v)}{(1+W_{-,\delta}^2(v))^\alpha}\, dv\right| \leq 
  \int_{V_\delta}^{\overline{u}}
  \frac{|W_{-,\delta}(v)|}{(1+W_{-,\delta}^2(v))^\alpha}\, dv \leq  
  \int_{V_\delta}^{\overline{u}} |W_{-,\delta}|^{1-2\alpha}\, dv\\
  & \leq
  |W_{-,\delta}^{1-2\alpha}(\overline{u})|(\overline{u}-V_{\delta}),
\end{align*}  

so that it goes to zero as $\delta \rightarrow 0$ since $\alpha>1/2$. 

Hence (\ref{wquad})  implies that there is a positive constant $C_1$ and
a value $\delta_1 >0$ such that for all $0 <\delta< \delta_1$ we have
\begin{equation}\label{West}
    W_{-,\delta}^2 (V_\delta) >  \frac{C_1}{\delta}.
  \end{equation}

  From (\ref{West}) we can get an estimate of $V_\delta$ in terms of
  $W_{-,\delta}(V_\delta)$.

  As we have
  \[
    -g(V_\delta) = \frac{\epsilon
    \lvert
    W_{-,\delta}(V_\delta)\lvert}{(1+W_{-,\delta}^2(V_\delta))^\alpha}\leq
  \epsilon \lvert W_{-,\delta}(V_\delta)\lvert^{1 - 2\alpha},
\]
using (\ref{West}), we have that for  $0 < \delta < \delta_1$  for
some positive constant $C_2$  
\begin{equation}\label{Vg1}
   -g(V_\delta)  \leq C_2\delta^{(2\alpha-1)/2}. 
\end{equation}

Hence, since 
\[
  \lim_{\delta \to 0} \frac{g(V_\delta)}{g'(u_+)(V_\delta - u_+)} = 1,
\]  
there exists a positive constant $\delta_2 \leq \delta_1$ such that for all $0 < \delta < \delta_2$,
\[
  -g(V_\delta) \geq  - \frac{1}{2}g'(u_+)(V_\delta - u_+),
\] 
and we have that there is a positive constant $C_3$ such that 
\begin{equation}\label{Vg2}
  V_\delta-u_+ \leq C_3 \delta^{(2\alpha-1)/2} .
\end{equation}

Next we show that this estimate of $V_\delta$ as $\delta \rightarrow
0$ implies that there cannot be heteroclinic solutions of
(\ref{sp}) for small enough $\delta$. For this, we will proceed by contradiction.

Assume  that there is a monotone travelling wave $W_{\delta}(v)$
connecting $(u_-, 0)$ to $(u_+, 0)$. Let us call
$W_{+,\delta}(v)$ the semi-orbit defined on $[u_+,V_\delta]$. Thus $W_{\delta}(v)= W_{-, \delta}(v)$ on $[V_\delta, u_-]$
and $W_{\delta}(v)= W_{+, \delta}(v)$ on $[u_+, V_\delta]$ with
$W_{+,\delta}(V_\delta)= W_{-,\delta}(V_\delta).$

 So $W_{+,\delta}(v)$  satisfies the equivalent 
of (\ref{tw2}) on $[u_+,V_\delta]$:

\begin{equation}\label{wpos}
  \delta W_{+,\delta}\frac{dW_{+,\delta}}{dv} = g(v) - \frac{\epsilon
    W_{+,\delta}}{(1+W_{+,\delta}^2)^\alpha}.
\end{equation}

Since on $[u_+,u_-]$, $g(v)\leq 0$ as is $ W_{+,\delta}$, rearranging,
we
have
\[
  -\delta(1+W_{+,\delta}^2)^\alpha \frac{d\vert W_{+,\delta}\lvert}{dv} + \epsilon \geq
    0.
\]    
Therefore, as between $u_+$ and $V_\delta$, $d|W_{+,\delta}|/dv > 0$, 
\[
   \delta(|W_{+,\delta}|^2)^\alpha \frac{d|W_{+,\delta}|}{dv} \leq
   \epsilon,
 \]
 and hence, by integrating over $[u_+, V_\delta]$, we obtain that for
 some $C_4>0$,
 \begin{equation}\label{Wpest}
   |W_{+,\delta}(V_\delta)|^{2\alpha+1} \leq \frac{C_4}{\delta}(V_\delta-u_+).
 \end{equation} 
  
\noindent 
{\bf Remark:} Note that this type of estimate is true for any value of $\epsilon$.

\noindent 
Using (\ref{Vg2}), we have from (\ref{Wpest}) that for all $0 < \delta < \delta_2$,
\begin{equation}\label{Wpest2}
  |W_{+,\delta}(V_\delta)|^{2\alpha+1} \leq C_4 \delta^{\alpha-3/2}.
\end{equation}

Therefore from (\ref{West}) we have that for all $0 < \delta < \delta_2$ there is some positive constant $C_5$ such that
\begin{equation}\label{rat}
  1= \left( \frac{|W_{+,\delta}(V_\delta)|}{|W_{-,\delta}(V_\delta)|}
  \right)^{2\alpha+1} \leq C_5 \delta^{2\alpha-1}.
\end{equation}  
But since $\alpha >1/2$ this inequality is not true for small values of $\delta$
, i.e. no monotone travelling wave can
exist. 
\end{proof}

Now we can add to Theorem~\ref{cont}: 

\begin{theorem}\label{limit0}
  Let $ \alpha > 1/2$ fixed. Then 
  \[
 \lim_{\delta \rightarrow 0}\epsilon_{min} (\alpha, \delta) =  \epsilon_{min}(\alpha).
\]
\end{theorem}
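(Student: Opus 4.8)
The plan is to establish the limit by a two-sided squeeze: I would obtain the upper bound $\limsup_{\delta\to 0_+}\epsilon_{min}(\alpha,\delta)\leq \epsilon_{min}(\alpha)$ from Theorem~\ref{smdelta} and the matching lower bound $\liminf_{\delta\to 0_+}\epsilon_{min}(\alpha,\delta)\geq \epsilon_{min}(\alpha)$ from Theorem~\ref{thint}. Both inequalities are already essentially packaged in those two results, so the present argument reduces to reading them off correctly and letting $\delta\to 0_+$, keeping in mind that for each fixed $\delta>0$ the quantity $\epsilon_{min}(\alpha,\delta)$ is well defined and achieved by Lemma~\ref{sm}, so the set of admissible $\epsilon$ is exactly $[\epsilon_{min}(\alpha,\delta),\infty)$.

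For the upper bound I would invoke Theorem~\ref{smdelta} in its singular form \eqref{case2}: with $\alpha>1/2$ and $\epsilon=\epsilon_{min}(\alpha)$, there is some $\delta_0(\alpha,\epsilon_{min}(\alpha))>0$ such that (\ref{twsys}) admits a monotone travelling wave for every $0<\delta<\delta_0$. Since the admissible set is $[\epsilon_{min}(\alpha,\delta),\infty)$, the existence of a monotone wave at $\epsilon=\epsilon_{min}(\alpha)$ forces $\epsilon_{min}(\alpha,\delta)\leq \epsilon_{min}(\alpha)$ on that range of $\delta$, whence
\[
  \limsup_{\delta\to 0_+}\epsilon_{min}(\alpha,\delta)\leq \epsilon_{min}(\alpha).
\]

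For the lower bound I would fix an arbitrary $\epsilon<\epsilon_{min}(\alpha)$ and apply Theorem~\ref{thint}: there is $\delta_{max}(\alpha,\epsilon)>0$ such that no monotone travelling wave of (\ref{tw2}) exists for $0<\delta<\delta_{max}(\alpha,\epsilon)$. By the same description of the admissible set, this means $\epsilon_{min}(\alpha,\delta)>\epsilon$ on that range, so $\liminf_{\delta\to 0_+}\epsilon_{min}(\alpha,\delta)\geq\epsilon$. As $\epsilon<\epsilon_{min}(\alpha)$ was arbitrary, letting $\epsilon\to\epsilon_{min}(\alpha)_-$ gives
\[
  \liminf_{\delta\to 0_+}\epsilon_{min}(\alpha,\delta)\geq \epsilon_{min}(\alpha).
\]

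Combining the two displays yields $\epsilon_{min}(\alpha)\leq\liminf\leq\limsup\leq\epsilon_{min}(\alpha)$, so all three coincide; in particular the limit exists and equals $\epsilon_{min}(\alpha)$. I expect no genuine obstacle in this short deduction itself: the real weight is carried by the two inputs, and of these Theorem~\ref{thint} is the hard ingredient, since ruling out monotone connections for small $\delta$ rests on the delicate rescaling estimates (\ref{West})–(\ref{rat}) comparing the two semi-orbits $W_{\pm,\delta}$. The only points to watch here are that the limit is one-sided ($\delta\to 0_+$) and that the singular case $\epsilon=\epsilon_{min}(\alpha)$ really is covered by Theorem~\ref{smdelta}, which it is via \eqref{case2}; given that, the squeeze closes immediately.
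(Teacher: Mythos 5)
Your proposal is correct and follows essentially the same route as the paper: the lower bound comes from Theorem~\ref{thint} and the upper bound from Theorem~\ref{smdelta}, combined in an $\varepsilon$--$\delta$ squeeze. The only cosmetic difference is that you invoke the singular case \eqref{case2} of Theorem~\ref{smdelta} at $\epsilon=\epsilon_{min}(\alpha)$ to get $\epsilon_{min}(\alpha,\delta)\leq\epsilon_{min}(\alpha)$ directly, whereas the paper applies the non-singular case at $\epsilon_{min}(\alpha)+A$ for arbitrary $A>0$; both are legitimate and yield the same conclusion.
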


\begin{proof} 
  Let $A>0$ and
  $\epsilon_1 = \epsilon_{min}(\alpha) -A < \epsilon_{min}(\alpha)$.
  By Theorem~\ref{thint}, there is
  $ \delta_{max}(\alpha, \epsilon_1)>0$ such that monotone travelling
  waves of (\ref{tw2}) do not exist for
  $0 <\delta < \delta_{max}(\alpha, \epsilon_1)$. So
  $\epsilon_{min}(\alpha, \delta) > \epsilon_1$.
 
  Now, let
  $\epsilon_2 = \epsilon_{min}(\alpha) + A >\epsilon_{min}(\alpha)$.
  By Theorem~\ref{smdelta}, there is
  $ \delta_{0}(\alpha, \epsilon_2)>0$ such that monotone travelling
  waves of (\ref{tw2}) exists and enters $(u_+, 0)$ through the main
  manifold for $0 <\delta < \delta_{0}(\alpha, \epsilon_2)$. So
  $\epsilon_min(\alpha, \delta) < \epsilon_2$.
    
    Finally for all
    $0 < \delta < \min(\delta_{0}(\alpha, \epsilon_2);
    \delta_{max}(\alpha, \epsilon_1))$,
    $ \vert \epsilon_{min} (\alpha, \delta) - \epsilon_{min}(\alpha)
    \vert < A$.
  \end{proof}

Note that this Theorem is not true for $\alpha=1/2$; see~\ref{three}
for a further discussion of this case.
  
From the above results, we immediately have   
\begin{corollary}\label{nld2}
  For $\alpha \geq 1/2$ and $\delta$ sufficiently small, (\ref{tw2})
  is nonlinearly determined.
\end{corollary}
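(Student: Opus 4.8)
The plan is to unwind the definition of nonlinear determinacy and compare $\epsilon_{min}(\alpha,\delta)$ with $\epsilon_0(\delta)$ as $\delta \to 0_+$. Recall that (\ref{tw2}) is nonlinearly determined precisely when $\epsilon_{min}(\alpha,\delta) > \epsilon_0(\delta)$, and that by (\ref{e0}) we have $\epsilon_0(\delta) = 2\sqrt{\delta}\sqrt{-g'(u_+)} \to 0$ as $\delta \to 0_+$. Thus it suffices to show that $\epsilon_{min}(\alpha,\delta)$ stays bounded below by a positive constant that is independent of $\delta$ on some neighbourhood of $0$; the strict inequality then follows simply by taking $\delta$ small enough that $\epsilon_0(\delta)$ drops below that constant.

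I would split the argument into the two cases $\alpha = 1/2$ and $\alpha > 1/2$. For $\alpha = 1/2$, Lemma~\ref{c0} specialises to $\epsilon_{min}(1/2,\delta) \geq \delta^{0}\,C_{1/2} = C_{1/2} > 0$, a lower bound that does not depend on $\delta$; this is of course also the content of Lemma~\ref{nlinLess} restricted to the endpoint $\alpha = 1/2$. For $\alpha > 1/2$, Theorem~\ref{limit0} gives $\lim_{\delta \to 0}\epsilon_{min}(\alpha,\delta) = \epsilon_{min}(\alpha) = H(\alpha)$, and since $S = \max_{v\in[u_+,u_-]}[-g(v)] > 0$ the explicit formula (\ref{emin2}) of Lemma~\ref{epsmin} gives $H(\alpha) > 0$. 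Hence there is $\delta^\ast > 0$ such that $\epsilon_{min}(\alpha,\delta) \geq H(\alpha)/2 > 0$ for all $0 < \delta < \delta^\ast$.

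In either case $\epsilon_{min}(\alpha,\delta)$ is bounded below by a positive constant $c_\alpha$ uniform in $\delta$ near $0$, whereas $\epsilon_0(\delta) \to 0$. Choosing $\delta$ small enough that $\epsilon_0(\delta) < c_\alpha$ yields $\epsilon_{min}(\alpha,\delta) \geq c_\alpha > \epsilon_0(\delta)$, which is exactly nonlinear determinacy.

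I do not expect any real obstacle: the corollary is a direct consequence of the lower bounds on $\epsilon_{min}$ already established — Lemma~\ref{c0} (equivalently Lemma~\ref{nlinLess}) for $\alpha = 1/2$ and Theorem~\ref{limit0} for $\alpha > 1/2$ — combined with the vanishing of $\epsilon_0(\delta)$ as $\delta \to 0$. The only point warranting a moment's care is that the lower bound on $\epsilon_{min}$ be \emph{uniform} in $\delta$ on a neighbourhood of $0$; this is automatic for $\alpha = 1/2$, where $C_{1/2}$ carries no $\delta$-dependence, and for $\alpha > 1/2$ it follows at once from the existence of the strictly positive limit in Theorem~\ref{limit0}.
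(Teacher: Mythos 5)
Your argument is correct and is essentially the route the paper intends: the corollary is stated as an immediate consequence of the preceding results, namely the uniform-in-$\delta$ lower bound from Lemma~\ref{c0} (equivalently Lemma~\ref{nlinLess}) at $\alpha=1/2$ and the positive limit $\epsilon_{min}(\alpha,\delta)\to\epsilon_{min}(\alpha)=H(\alpha)>0$ from Theorem~\ref{limit0} for $\alpha>1/2$, set against $\epsilon_0(\delta)=2\sqrt{\delta}\sqrt{-g'(u_+)}\to 0$. Your only addition is to make the uniformity of the lower bound explicit, which is a harmless and correct elaboration.
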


We have also the following statements:

\begin{lemma}\label{md}
For $\alpha>1/2$ the function $\delta \mapsto
\epsilon_{min}(\alpha,\delta)$ cannot be monotone decreasing.
\end{lemma}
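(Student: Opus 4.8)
The plan is to argue by contradiction, combining two facts already in hand: the boundary value of $\epsilon_{min}(\alpha,\delta)$ as $\delta\to 0_+$ supplied by Theorem~\ref{limit0}, and the linear determinacy for large $\delta$ supplied by Theorem~\ref{theolin}. So I would begin by supposing, for contradiction, that $\delta\mapsto\epsilon_{min}(\alpha,\delta)$ is monotone decreasing on $(0,\infty)$.

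First I would extract the consequence of monotonicity near the origin. Since the map is assumed monotone decreasing and, by Theorem~\ref{limit0}, $\lim_{\delta\to 0_+}\epsilon_{min}(\alpha,\delta)=\epsilon_{min}(\alpha)<\infty$, monotonicity forces the uniform upper bound $\epsilon_{min}(\alpha,\delta)\le\epsilon_{min}(\alpha)$ for every $\delta>0$; that is, the finite value at the origin dominates all later values.

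Next I would contradict this bound using the large-$\delta$ regime. By Theorem~\ref{theolin} there is a finite threshold $\delta_\alpha$ such that $\epsilon_{min}(\alpha,\delta)=\epsilon_0(\delta)=2\sqrt{\delta}\sqrt{-g'(u_+)}$ for all $\delta\ge\delta_\alpha$. As $-g'(u_+)=\lambda-f'(u_+)>0$ is a fixed positive constant, $\epsilon_0(\delta)\to\infty$ as $\delta\to\infty$, so one can choose $\delta$ large enough that $\epsilon_{min}(\alpha,\delta)=\epsilon_0(\delta)>\epsilon_{min}(\alpha)$. This violates the uniform upper bound from the previous step, giving the desired contradiction and proving that $\delta\mapsto\epsilon_{min}(\alpha,\delta)$ cannot be monotone decreasing.

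There is essentially no delicate analytic step here once Theorems~\ref{limit0} and~\ref{theolin} are available; the only thing to check is that the two regimes are genuinely incompatible, namely that the finite limit $\epsilon_{min}(\alpha)$ forced at $\delta=0_+$ is strictly below the unbounded growth $\epsilon_0(\delta)\sim\sqrt{\delta}$ forced as $\delta\to\infty$. One could phrase the same contradiction through $\tilde{\epsilon}(\alpha)$ of Corollary~\ref{epsineq}: a monotone decreasing function would have to approach its infimum $\tilde{\epsilon}(\alpha)<\infty$ as $\delta\to\infty$, whereas linear determinacy forces it to diverge, which is impossible.
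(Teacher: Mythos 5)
Your argument is correct and rests on the same key fact as the paper's one-line proof: Theorem~\ref{theolin} forces $\epsilon_{min}(\alpha,\delta)=\epsilon_0(\delta)=2\sqrt{\delta}\sqrt{-g'(u_+)}$, a strictly increasing and unbounded function, for all $\delta\geq\delta_\alpha$, which is incompatible with monotone decrease. Your detour through Theorem~\ref{limit0} is harmless but unnecessary, since a monotone decreasing function is already bounded above on $[\delta_1,\infty)$ by its value at any fixed $\delta_1>0$, so the unboundedness of $\epsilon_0$ alone yields the contradiction.
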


\begin{proof}
  If this were the case, we would contradict Theorem~\ref{theolin}:
  for $\delta$ sufficiently large, the graph of 
  $\delta \mapsto \epsilon_{min}(\alpha,\delta)$ must be tangent to
  that of $\epsilon_0(\delta)$ which is monotone increasing as the two
  graphs cannot intersect transversally.
\end{proof}

\begin{lemma}\label{mi}
For $\alpha>1/2$ the function $\delta \mapsto
\epsilon_{min}(\alpha,\delta)$ cannot be monotone increasing.
\end{lemma}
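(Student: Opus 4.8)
The plan is to argue by contradiction, in parallel with Lemma~\ref{md} but now exploiting the small-$\delta$ behaviour of $\epsilon_{min}(\alpha,\delta)$ rather than its large-$\delta$ behaviour. I would suppose that the map $\delta \mapsto \epsilon_{min}(\alpha, \delta)$ is monotone increasing on $(0, \infty)$ and then collide this assumption with the two facts already established for $\alpha > 1/2$: the limit computed in Theorem~\ref{limit0} and the strict inequality of Corollary~\ref{epsineq}.

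The first step is the elementary observation that a monotone increasing function on $(0, \infty)$ realises its infimum as the one-sided limit at the left endpoint of its domain. Hence, under the assumption,
\[
  \tilde{\epsilon}(\alpha) = \inf_{\delta > 0} \epsilon_{min}(\alpha, \delta)
  = \lim_{\delta \rightarrow 0_+} \epsilon_{min}(\alpha, \delta).
\]
The second step is to invoke Theorem~\ref{limit0}, which identifies this limit with $\epsilon_{min}(\alpha)$. Combining the two displays yields $\tilde{\epsilon}(\alpha) = \epsilon_{min}(\alpha)$.

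Finally, I would note that this equality directly contradicts Corollary~\ref{epsineq}, which for $\alpha > 1/2$ guarantees the strict inequality $\tilde{\epsilon}(\alpha) < \epsilon_{min}(\alpha)$. Since a contradiction is reached, the map cannot be monotone increasing. There is no substantive obstacle in this argument: the entire proof rides on the two previously proven results, and the only point requiring (trivial) care is the fact that a monotone increasing function attains its infimum as the limit at the left endpoint. It is precisely this fact that pins $\tilde{\epsilon}(\alpha)$ to the boundary value $\epsilon_{min}(\alpha)$ and thereby forces the collision with the strict inequality.
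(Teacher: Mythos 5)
Your proof is correct and is essentially the paper's argument: the paper disposes of this lemma in one line by citing Corollary~\ref{epsineq}, and your write-up simply makes explicit the two ingredients that line relies on, namely that a monotone increasing function attains its infimum as the limit at $\delta\rightarrow 0_+$ and that Theorem~\ref{limit0} identifies that limit with $\epsilon_{min}(\alpha)$, contradicting the strict inequality of the corollary.
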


\begin{proof}
This result follows by Corollary~\ref{epsineq}.  
\end{proof}

\section{Remarks and Conclusions}

In this paper we have conducted a detailed analysis of the existence
of monotone travelling waves for the generalised
Rosenau-KdV equation (\ref{Ros1}). Our results give a complete
description of the domain of existence of such waves in the case
$\alpha<1/2$, while in the case of $\alpha > 1/2$ what is missing is a
proof of the (conjectured) uniqueness of the minimum of
$\epsilon_{min}(\alpha, \delta)$. The borderline case $\alpha=1/2$ is
of particular interest. In this case the domain of existence of
monotone travelling waves is rather complicated. While
Theorem~\ref{theoMondelta} provides a monotonicity result in that
case, we also have (see~\ref{three}) that
$\lim_{\delta \rightarrow 0} \epsilon_{min}(1/2,\delta)=3/8$, while
$\lim_{\alpha \rightarrow 1/2_+} \epsilon_{min}(\alpha)=1/2$.

Following Schonbek \cite{Schonbek1982} in the case $\alpha =0$, and
the result of Lemma \ref{c0} we formulate the following conjecture:

\noindent
{\bf Conjecture:} Solution of Cauchy problems for the
Rosenau-KdV equation converge to the entropy solution of (\ref{iB}) if
$\delta = o(\epsilon^{1/(1/2 - \alpha)})$ as $\epsilon \to 0$ in the
case $\alpha < 1/2$.  When $\alpha \geq 1/2$ solution of the Cauchy
problem do not converge to the entropy solution of (\ref{iB}) as
$\epsilon$ and $\delta$ go to $0$.

Our analysis leaves open a number of interesting problems:
\begin{itemize}
\item We have shown that in every case of $\alpha>0$, the travelling
  wave problem (\ref{tw2}) is nonlinearly determined for $\delta$
  small enough and linearly determined for $\delta$ large enough. A
  better characterisation of the minimality transition point than that
  given in~\ref{epst} is clearly required.

\item We did not prove the uniqueness of minimum for
  $\epsilon_{min}(\alpha,\delta)$  for $\alpha>1/2$. Perhaps Melnikov
  methods as used for example in Ref.\cite{Balasuriya} would be useful
  here.

\item In the threshold case $\alpha=1/2$, we gave an argument for
  locating the value of
  $\lim_{\delta \rightarrow 0_+} \epsilon_{min}(1/2,\delta)$. Let us
  denote this value of $\epsilon$ by $\tau_0$, $0$ indicating that the
  curve connecting to it is the boundary of the region on the
  $(\delta, \epsilon)$ plane of heteroclinic connections with zero
  intersections of the $v=u_+$ line. It would also be interesting to
  determine the location of the points $\epsilon=\tau_k$, $k \geq 1$,
  lying on the boundary of regions of existence of heteroclinic
  connections with $k$ intersections with the line  $v=u_+$ . 

\item Clearly, there are PDE-theoretic aspects of the problem that
  were outside the scope of the present study. A global existence
  theory for the Rosenau-KdV is needed, as is, in general, a 
  characterisation of properties of the generalised Rosenau diffusion
  operator, especially in the case of $\alpha > 1/2$. 
\end{itemize}

Finally we note that it would also be interesting to consider the
existence of monotone travelling waves for the following (generalised)
Burgers-Rosenau equation 
 \begin{equation}\label{Ros2}
   u_t+f(u)_x= \epsilon u_{xx} +
   \delta\left(\frac{u_x}{(1+u_x^2)^\alpha}\right)_{xx},
\end{equation}
in which case the papers of Garrioni and 
coauthors \cite{GarrioneSanchez2015,GarrioneStrani2019} will be useful for the case $\alpha=1/2$.

\appendix

\section{Hadeler--Rothe computation of $H(\alpha)$} \label{HR}

In this appendix we use the ideas of Hadeler and
Rothe \cite{HadelerRothe75} to arrive exactly at (\ref{emin2}).

From
(\ref{feq}) we have that
\[
  \epsilon_{min}(\alpha,\delta) \geq \sup_{v \in [u_+,u_-]} \left[
    \delta F'(v) - \frac{g(v)}{F(v)}
    \right]\left[1+(F(v))^2\right]^\alpha.
\]
Now, we use the Hadeler-Rothe type approximation, approximating
$F(v)$  by $-Ag(v)$ and have the approximation scheme
\begin{equation}\label{appr1}
  \epsilon_{min}(\alpha,\delta) \approx \inf_{A \in \R_+} \sup_{v \in [u_+,u_-]}
  \left[ -\delta Ag'(v) + \frac{1}{A}
  \right]\left[1+A^2 (g(v))^2\right]^\alpha.
\end{equation}
As $\delta \rightarrow 0$, the infimum in $A \in \R_+$ is reached for
a finite value of $A$ if $\alpha>1/2$. To take the limit in the
right-hand side of (\ref{appr1}) as $\delta \rightarrow 0$, or
alternatively, to find the leading term in the asymptotic expansion of
our approximation of $\epsilon_{min}(\alpha, \delta)$ in powers of
$\delta$, we can just disregard the $-\delta Ag'(v)$ term to obtain
\begin{equation}\label{appr2}
  \epsilon_{min}(\alpha) \approx
  \inf_{A \in \R_+} \sup_{v \in [0,2]} \frac{1}{A} \left[1+A^2
    (g(v))^2\right]^\alpha. 
\end{equation}
Now let us choose
$u_-=2$, $u_+=0$, so that $g(v)=\frac12 v(v-2)$.  With that choice,
which also leads to $S=1/2$, we have 
\begin{equation}\label{appr3}
  \epsilon_{min}(\alpha) \approx
  \inf_{A \in \R_+} \sup_{v \in [0,2]} \frac{1}{A} \left[1+\frac{A^2}{4}
    v^2(2-v)^2\right]^\alpha. 
\end{equation}
The supremum in $v$ is reached at $v=1$, so we have the very elegant
formula,
\begin{equation}\label{20a}
  \epsilon_{min}(\alpha)
  \approx \inf_{A \in \R_+} \frac{\left[1+A^2/4\right]^\alpha}{A}.
\end{equation}
This minimisation problem can be solved explicitly and we finally
obtain that
\begin{equation}\label{20b}
  \epsilon_{min}(\alpha)
  \approx 2^{-1+\alpha} \left( \frac{\alpha}{2\alpha-1}
  \right)^\alpha \sqrt{2\alpha-1},
\end{equation}
which is {\em exactly} the formula (\ref{emin2}) obtained in
Lemma~\ref{epsmin} by the tangency argument.

\section{Computation of $\lim_{\delta \rightarrow 0_+}
  \epsilon_{min}(1/2,\delta)$} \label{three}

We want to establish that if $u_+=0$, $u_-=2$, and hence $g(v)=
v(v-2)/2$, 
\[
  \lim_{\delta \rightarrow 0_+} \epsilon_{min}(1/2,\delta) = \frac38.
\]

First of all however we want to discuss why this result does not
contradict the result that follows from Lemma~\ref{epsmin} that shows that
in this case
\[
  \lim_{\alpha \rightarrow 1/2_+} \epsilon_{min} (\alpha)= \frac12.
\]

The reason is that for $\alpha=1/2$ the mapping $\delta \mapsto
\epsilon_{min}(1/2,\delta)$ is not continuous at $\delta=0$.

So let us fix $\alpha=1/2$ and take $\delta$ small. In Figure~\ref{pp}
we show the heteroclinic orbit for the case of $u_+=0$, $u_-=2$, and
$g(v)= v(v-2)/2$, with small $\delta$ and $\epsilon \approx 3/8$. From
that picture it looks that the heteroclinic can be broken in two
pieces, one close to the saddle point at $(u_-,0)$, and the rest. We
will find the leading term of the expansion (in $\sqrt{\delta}$ of the
portion of the (closure of) heteroclinic orbit that satisfies the zero
boundary condition at $(u_+,0)$.

  \begin{figure}[H]
    	\centerline{\includegraphics[width=0.7\textwidth]{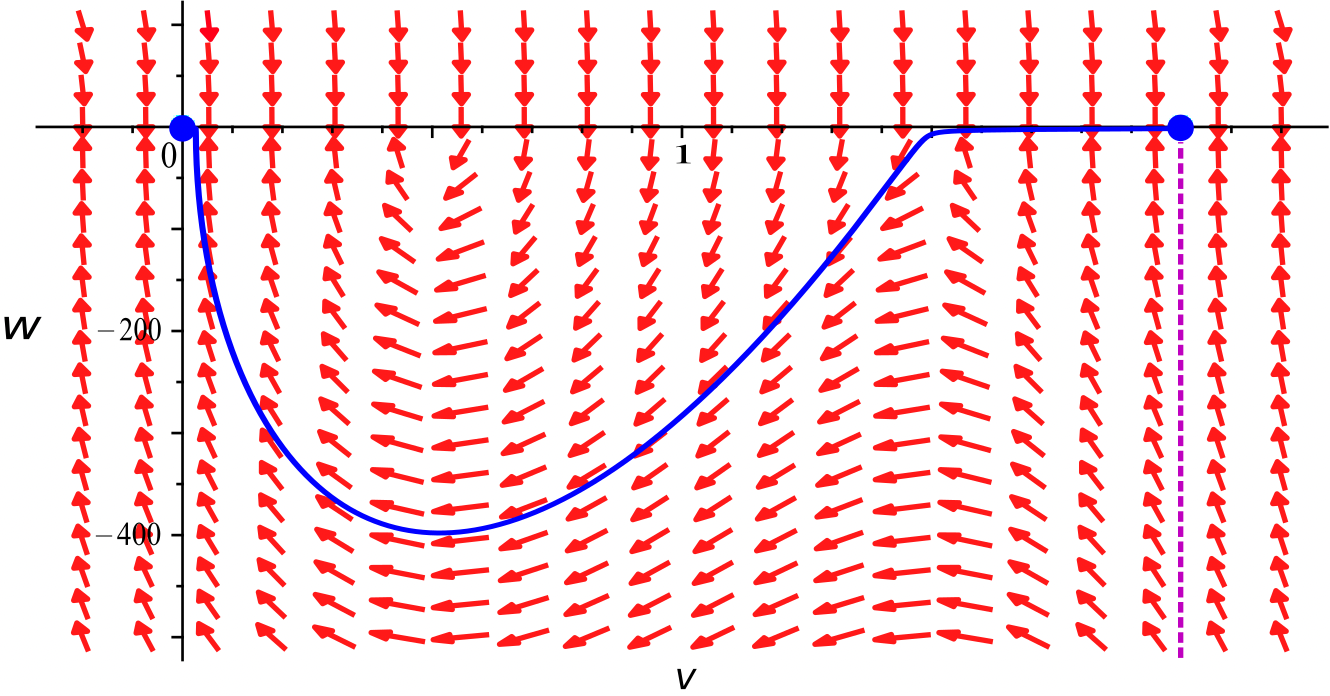}}
     	\caption{The structure of the heteroclinic orbit.} \label{pp}
  \end{figure} 
  
In general, we are solving the second order system
\[
  \delta v'' + \frac{\epsilon v'}{\sqrt{1+(v')^2}} - g(v) =0.
\]

Setting as in (\ref{feq}), $v'= -F(v)$ and using the chain rule, we have
\[
  \delta F F' = \frac{\epsilon F}{\sqrt{1+F^2}} + g(v). 
\]
Now put $F=Z/\sqrt{\delta}$. Then
\[
  \frac{d}{dv} \left(\frac{1}{2} Z^2 \right) =
  \frac{\epsilon Z}{\sqrt{\delta+Z^2}} + g(v).
\]
We seek the expansion $Z(v) \sim Z_0(v)+ O(\sqrt{\delta})$.  Note that
$Z\geq 0$, so $Z=|Z|$. For $Z_0(v)$ we
then have the trivial equation
\begin{equation}\label{lt}
  \frac{d}{dv} \left(\frac{1}{2} Z_0^2 \right) = \epsilon + g(v). 
\end{equation}

However, $Z_0$ has to satisfy the two boundary conditions $Z_0(u_+)=0$ and
$Z_0(u_-)=0$.

We see that the boundary condition at $v=u_-$ cannot be satisfied, so
that we use $Z_0(u_+)=0$.  Integrating (\ref{lt}), we obtain
\[
 Z_0(v) := \sqrt{2}\sqrt{G(v)+\epsilon (v-u_+)},
\]

where $G(v) = {\displaystyle \int_{u_+}^v g(v)\, dv}$. By properties
of $g$, $G$ is a monotone decreasing function, $G(u_+)=0$. We now choose
$\epsilon$ which solves the problem 
\[
  \hbox{argmin}_{\epsilon \geq 0} \{ Z_0(v) \hbox{ is defined for all }
  v \in [u_+,u_-]\}.
\]
Such a value of $\epsilon$ exists and is defined by
\[
 \epsilon := \max_{ v \in [u_+,u_-]}\frac{1}{(v-u_+)} \vert G(v)\vert
 = \max_{ v \in [u_+,u_-]}\left(\frac{1}{(v-u_+)}{\displaystyle
   \int_{u_+}^v \vert g(v)\vert\, dv}\right).
\]

We specialise now to our example, $u_+=0$, $u_-=2$, and
$g(v)= v(v-2)/2$, and see that this selection criterion indeed gives
us the value $\epsilon=3/8$.

We make a number of remarks.

1. This selection criterion makes sense as we are looking at the limit
as $\delta \rightarrow 0$ of values of $\epsilon$ for which there exist
monotone travelling waves; clearly, if such solutions exist, $Z_0(v)$
will capture the leading term behaviour close to the point
$(u_+,0)$. Hence if $Z_0(v)$ cannot be constructed, it is reasonable to
assume that $\epsilon$ is too small.

2. Numerically, $-F(v) \approx Z_0(v)/\sqrt{\delta}$ gives a very good
approximation to the heteroclinic for small $\delta$. For
$\epsilon=3/8$, the maximum of $Z_0(v)$ is achieved at $v=1/2$ and 
the minimum of $-F(v)$ is approximately given by
\[
  \min_{v \in [0,2]} -F(v) \approx \frac{-0.40827}{\sqrt{\delta}},
\]
which is well borne out by numerics for small $\delta$.

3. To continue the expansion, we write
\[
  Z(v) \sim Z_0(v) + \sqrt{\delta} Z_1(v) + \cdots
\]
(it is probably easier to work with $Z^2(v)$) and
\[
  \epsilon=3/8 + \sqrt{\delta} \epsilon_1 + \cdots,
\]
which is the dependence on $\delta$ seen in numerics. {\em A priori}
it is not clear what is the selection criterion for $\epsilon_1$. 

\section{Lower bound for $\tilde{\epsilon}(\alpha)$ and
  $\delta_\alpha$ for $\alpha >1/2$} \label{epst}

Set $\alpha>1/2$. Then define

\[
  A(\alpha) = \max_{z \in {\mathbb R}_+} \frac{z}{(1+z^2)^\alpha} = {(2\alpha)}^{-\alpha}(2\alpha -1)^{\alpha-1/2} .
\]

Then
\[
  \epsilon A(\alpha) \geq \epsilon \frac{F(v)}{(1+F(v)^2)^\alpha} =
  \delta F(v)F'(v)-g(v).
\]

Now integrate over $[u_+, v]$, $v\in (u_+, u_-]$ and we have
\[
  \tilde{\epsilon} (\alpha) 
  A(\alpha) \geq 
  \frac{1}{(v-u_+)}
\int_{u_+}^{v} | g(v)| \, dv.
\]

This is true for all $v\in (u_+, u_-]$, thus

\[
  \tilde{\epsilon} (\alpha) \geq \frac{1}{A(\alpha)}
  \max_{v\in [u_+, u_-]}
  \left(\frac{1}{(v-u_+)}
  \int_{u_+}^{v} | g(v)| \, dv\right) =\mu(\alpha).
\]

As $A(\alpha)$ is monotone decreasing in $\alpha$, the estimate is
monotone increasing in $\alpha$.
  
It is quite a conservative estimate, e.g. for $\alpha=1$ it gives 3/4,
while the correct value is about 0.8.

This estimate gives a
bound on the minimality exchange: a lower bound for $\delta_\alpha$
for $\alpha > 1/2$ is 
\[
  \delta_\alpha \geq \frac{\mu(\alpha)^2}{4}.
\]  

\section*{Acknowledgements}

GM gratefully acknowledges support from the ISP (Uppsala University,
Sweden); JMCC would like to thank EMS for funds that made possible a
visit to Glasgow where some of the work presented here was done; 
JMCC and MG acknowledge support of FCT, Portugal, under CIMA project
UIDB/04674/2020, DOI: 10.54499/UIDB/04674/2020.


\noindent
%
%
%
%
%

(Gnord Maypaokha) LAMFA-CNRS UMR 7352, Universit\'e de Picardie Jules Verne, France \& Department of Mathematics and Statistics, National University of Laos, Lao PDR\\
\textit{Email address}: \texttt{{gnord\_1685@yahoo.com}}\\

(Nabil Bedjaoui) LAMFA-CNRS UMR 7352, Universit\'e de Picardie Jules Verne, France \\
\textit{Email address}: \texttt{{nabil.bedjaoui@u-picardie.fr}}\\

(Joaquim M. C. Correia) CIMA, IIFA, U\'Evora, \'Evora, Portugal \&
CAMGSD, IST, Lisboa, Portugal\\
\textit{Email address}: \texttt{{jmcorreia@uevora.pt}}\\

(Michael Grinfeld) Department of Mathematics and Statistics, University of Strathclyde, Glasgow G1 1XH, UK\\
\textit{Email address}: \texttt{{m.grinfeld@strath.ac.uk}}\\

\end{document}